\newtheorem{theorem}{Theorem}
\newtheorem{examps}[theorem]{Examples}
\newtheorem{exercise}[theorem]{Exercise}
\newcommand{\BZ}{{\mathbb{Z}}}
\newcommand{\BN}{{\mathbb{N}}}
\newcommand{\BR}{{\mathbb{R}}}
\newcommand{\BC}{{\mathbb{C}}}
\newcommand{\BF}{{\mathbb{F}}}
\newcommand{\BQ}{{\mathbb{Q}}}
\newcommand{\BG}{{\mathbb{G}}}
\newcommand{\BH}{{\mathbb{H}}}
\newcommand{\BK}{{\mathbb{K}}}
\newcommand{\OO}{{\mathcal{O}}}
\newcommand{\gD}{\Delta}
\newcommand{\gd}{\delta}
\newcommand{\gb}{\beta}
\newcommand{\gC}{\Gamma}
\newcommand{\gc}{\gamma}
\newcommand{\gs}{\sigma}
\newcommand{\gS}{\Sigma}
\newcommand{\gO}{\Omega}
\newcommand{\go}{\omega}
\newcommand{\gep}{\epsilon}
\newcommand{\ga}{\alpha}
\newcommand{\ti}[1]{\tilde{#1}}
\newcommand{\rank}{\text{rank}}
\newcommand{\Ad}{\text{Ad}}
\newcommand{\vol}{\text{vol}}
\newcommand{\SL}{\text{SL}}
\newcommand{\GL}{\text{GL}}
\newcommand{\PSL}{\text{PSL}}
\newcommand{\SO}{\text{SO}}
\newtheorem{prop}{Proposition}[section]
\newtheorem{thm}[prop]{Theorem}
\newtheorem{lem}[prop]{Lemma}
\newtheorem{cor}[prop]{Corollary}
\newtheorem{conj}[prop]{Conjecture}
\theoremstyle{definition}
\newtheorem{defn}[prop]{Definition}
\newtheorem{rem}[prop]{Remark}
\newtheorem{exam}[prop]{Example}
\begin{document}
\frontmatter
\tableofcontents
\mainmatter
\LectureSeries[Lattices]{Lectures on Lattices and locally symmetric spaces
\author{Tsachik Gelander}}
\address{Einstein Institute of Mathematics, The Hebrew University of Jerusalem
Jerusalem, 91904, Israel}



%

\section*{Introduction}
The aim of this short lecture series is to expose the students to the beautiful theory of lattices by, on one hand, demonstrating various basic ideas that appear in this theory and, on the other hand, formulating some of the celebrated results which, in particular, shows some connections to other fields of mathematics. The time restriction forces us to avoid many important parts of the theory, and the route we have chosen is naturally biased by the individual taste of the speaker.

\lecture{A brief overview on the theory of lattices}

The purpose of the first lecture is to introduce the student to the theory of lattices. While some parts of this lecture will be done consistently with full detail (some of which are given as exercises), other parts should be considered as a story aiming to give a broader view on the theory. 

\section
{Few definitions and examples}

Let $G$ be a locally compact group equipped with a left Haar measure $\mu$, i.e. a Borel regular measure which is finite on compact sets, positive on open sets and invariant under left multiplications (i.e. $\mu(gA)=\mu(A)$ for every $g\in G$ and measurable $A\subset G$) --- by Haar's theorem such $\mu$ exists and is unique up to normalization. The group $G$ is called unimodular if $\mu$ is also right invariant, or equivalently if it is symmetric in the sense that $\mu (A)=\mu (A^{-1})$ for every measurable set $A$. Note that $G$ is compact iff $\mu(G)<\infty$. 

For example:
\begin{itemize}
\item Discrete groups, abelian groups, compact groups and Perfect groups (i.e. groups that are equal to their commutator subgroup) are unimodular.
\item The group of affine transformations of the real line is not unimodular.
\end{itemize}

A closed subgroup $H\le G$ is said to be {\it co-finite} if the quotient space $G/H$ admits a non-trivial finite $G$ invariant Borel regular measure. 
Like in Haar's theorem, it can be shown that a $G$ invariant measure on $G/H$, if exists, is unique up to scaling (see \cite[Lemma 1.4]{Rag}). 
A {\it lattice} in $G$ is a co-finite discrete subgroup. A discrete subgroup $\gC\le G$ is a lattice iff it admits a finite measure fundamental domain, i.e. a measurable set $\gO$ of finite measure which forms a set of right coset representatives for $\gC$ in $G$ --- we will always normalize the measures so that  $\vol(G/\gC)=\mu(\gO)$. 
Let us denote $\gC\le_LG$ to express that $\gC$ is a lattice in $G$. 

\begin{exercise}\label{ex:1}
Show that every two fundamental domains have the same measure.
\end{exercise}

\begin{exercise}\label{ex:L=>um}
Deduce from Exercise \ref{ex:1} that if $G$ admits a lattice then it is unimodular. 
\end{exercise}

We shall say that a closed subgroup $H\le G$ is {\it uniform} if it is cocompact, i.e. if $G/H$ is compact. 

\begin{exercise}
A uniform discrete subgroup $\gC\le G$ is a lattice. Note that if $G=\SL_2(\BR)$ and $H$ is the Borel subgroup of upper triangular matrices, then $H$ is uniform but not co-finite.
\end{exercise}

\begin{examps}
\begin{enumerate}
\item If $G$ is compact every closed subgroup is cofinite. The lattices are the finite subgroups.
\item If $G$ is abelian, a closed subgroup $H\le G$ is cofinite iff it is uniform. 
\item Let $G$ be the Heisenberg group of $3\times 3$ upper triangular unipotent  matrices over $\BR$ and let $\gC=G(\BZ)=G\cap\SL_3(\BZ)$ be the integral points. Then $\gC$ is a uniform lattice in $G$. 
\item Let $T$ be a $k$ regular tree equipped with a $k$ coloring of the edges such that neighboring edges have different colors. Let $G=\text{Aut}(T)$ be the group of all automorphisms of $T$ considered with the open-compact topology and let $\gC$ be the group of those automorphisms that preserve the coloring. Then $\gC$ is a uniform lattice in $G$. (To study more about lattices in $\text{Aut}(T)$, see \cite{BaLu}.) 
\item $\SL_n(\BZ)$ is a non-uniform lattice in $\SL_n(\BR)$ (see \cite{Si} or \cite[Ch. X]{Rag}).
\item Let $\gS_g$ be a closed surface of genus $g\ge 2$. Equip $\gS_g$ with a hyperbolic structure and fix a base point and a unit tangent vector. The action of the fundamental group $\pi_1(\gS_g)$ via Deck transformations on the universal cover $\BH^2=\ti\gS_g$ yields an embedding of $\pi_1(\gS_g)$ in $\PSL_2(\BR)\cong\text{Isom}(\BH^2)^\circ$ and the image is a uniform lattice.  
\end{enumerate}
\end{examps}

\section
{Lattices resemble their ambient group in many ways.} Here are few illustrations of this phenomenon: Let $G$ be a locally compact group and $\gC\le_L G$ a lattice.
\begin{enumerate} 
\item $G$ is amenable iff $\gC$ is amenable.
\item $G$ has property (T) iff $\gC$ has property (T).
\item Margulis' normal subgroup theorem \cite[Ch. VIII]{Mar1}: If $G$ is a simple Lie group of real rank $\ge 2$ (e.g. $SL_n(\BR)$ for $n\ge 3$) then $\gC$ is just infinite, i.e. has no infinite proper quotients, or in other words, every normal subgroup of $\gC$ is of finite index.
\item Borel density theorem: If $G$ is a non-compact simple real algebraic group then $\gC$ is Zariski dense in $G$ (a good reference for this result is \cite{Fur}).
\end{enumerate}

Items $(1)$ and $(2)$ can be deduced directly from the definition of co-finiteness and we recommend them as exercises, but they can be found in many places (for an excellent reference for property $(T)$, see \cite{BHV}) . Note that both amenability and property $(T)$ can be expressed as fixed point properties. 

\section{Some basic properties of lattices}

In this section some basic results about lattices are proved. Students who wish to accomplish a more comprehensive background are highly encouraged to read \cite[Ch 1]{Rag} as well as \cite{Sel}. Let $G$ be a locally compact second countable group.

\begin{lem}[Compactness criterion]\label{thm:compt-crit}
Suppose $\gC\le_L G$, let $\pi:G\to G/\gC$ be the quotient map and let $g_n\in G$ be a sequence. Then $\pi(g_n)\to \infty$ (i.e. eventually leaves every compact set) iff there is a sequence $\gc_n\in\gC\setminus \{ 1\}$ such that $g_n\gc_ng_n^{-1}\to 1$. \end{lem}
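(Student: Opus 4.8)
The plan is to prove the two implications separately, using throughout that a lattice forces $G$ to be unimodular (Exercise~\ref{ex:L=>um}), that $\vol(G/\gC)<\infty$, and that --- $G$ being locally compact second countable and $\gC$ discrete --- the space $G/\gC$ is locally compact, second countable and Hausdorff (hence metrizable), while $\pi$ is an open covering map: in particular $\pi$ admits continuous local sections, and $\vol(\pi(E))=\mu(E)$ for every Borel $E\subseteq G$ on which $\pi$ is injective. The ``if'' direction I would then handle by contradiction: suppose $\gc_n\in\gC\setminus\{1\}$ with $g_n\gc_ng_n^{-1}\to 1$ but $\pi(g_n)\not\to\infty$; passing to a subsequence, $\pi(g_{n_k})\to\pi(g)$ for some $g$, and a local section of $\pi$ near $\pi(g)$ lets us write $g_{n_k}=v_k\gd_k$ with $\gd_k\in\gC$ and $v_k\to g$, so that
\[
\gd_k\gc_{n_k}\gd_k^{-1}=v_k^{-1}\big(g_{n_k}\gc_{n_k}g_{n_k}^{-1}\big)v_k\ \longrightarrow\ 1 ;
\]
this exhibits a sequence of non-identity elements of the discrete group $\gC$ converging to $1$, which is absurd.

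For the ``only if'' direction --- the real content --- I would first reduce to a uniform local claim: it suffices to show that for every neighbourhood $W$ of $1$ there is an $N$ such that every $n\ge N$ admits some $\gc\in\gC\setminus\{1\}$ with $g_n\gc g_n^{-1}\in W$, since feeding a countable neighbourhood basis $W_1\supseteq W_2\supseteq\cdots$ of $1$ into this and selecting such $\gc_n$ on the successive blocks of indices yields $\gc_n$ with $g_n\gc_ng_n^{-1}\to 1$ (if $\gC=\{1\}$, i.e. $G$ is compact, there is nothing to prove). I would prove this claim in contrapositive form: if $W$ is a symmetric neighbourhood of $1$ and, along a subsequence $(n_k)$, one has $g_{n_k}\gc g_{n_k}^{-1}\notin W$ for all $\gc\in\gC\setminus\{1\}$, then $\pi(g_{n_k})$ has a subsequence inside a compact set. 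Fix a symmetric, relatively compact open $U\ni 1$ with $\overline U\cdot\overline U\subseteq W$. The key observation is that $\pi$ is \emph{injective} on $Ug_{n_k}$: if $ug_{n_k}\gC=u'g_{n_k}\gC$ with $u,u'\in U$, then $u'^{-1}u\in(U^{-1}U)\cap(g_{n_k}\gC g_{n_k}^{-1})\subseteq W\cap(g_{n_k}\gC g_{n_k}^{-1})=\{1\}$ (using $U^{-1}U=UU\subseteq\overline U\cdot\overline U\subseteq W$ since $U$ is symmetric). Hence $\vol(\pi(Ug_{n_k}))=\mu(Ug_{n_k})=\mu(U)>0$, the last equality by unimodularity.

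A Borel--Cantelli type pigeonhole then finishes the argument: the Borel sets $A_k:=\pi(Ug_{n_k})$ each have $\vol(A_k)\ge\mu(U)$, and $\bigcup_{k\ge N}A_k$ decreases with $N$ inside the finite-measure space $G/\gC$, so $\vol(\limsup_k A_k)=\lim_N\vol(\bigcup_{k\ge N}A_k)\ge\mu(U)>0$; in particular $\limsup_k A_k\ne\emptyset$, i.e. some $x=\pi(g)$ lies in $A_k$ for infinitely many $k$. For each such $k$ we have $x=u_kg_{n_k}\gC$ with $u_k\in U$, whence $\pi(g_{n_k})=u_k^{-1}\cdot x\in\overline{U^{-1}}\cdot x$, a compact subset of $G/\gC$ independent of $k$; thus $\pi(g_{n_k})$ stays in a fixed compact set for infinitely many $k$, contradicting $\pi(g_n)\to\infty$. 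I expect the main obstacle to be exactly this volume/pigeonhole step --- passing from ``$g_{n_k}\gC g_{n_k}^{-1}$ avoids a fixed neighbourhood of $1$'' to ``$\pi(g_{n_k})$ is bounded'' --- where the injectivity of $\pi|_{Ug_{n_k}}$, hence the use of the hypothesis and of unimodularity, is the crux; the rest is routine point-set topology and indexing.
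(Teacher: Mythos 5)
Your argument is correct and is essentially the paper's proof: one direction by lifting a non-escaping subsequence near a limit point and invoking discreteness of $\gC$, the other by the same volume pigeonhole — if no nontrivial conjugate $g_n\gc g_n^{-1}$ lies in $U^{-1}U$ then $Ug_n$ injects into $G/\gC$ and its image has measure $\mu(U)$ (unimodularity plus finite covolume), which is incompatible with $\pi(g_n)\to\infty$ — followed by the same second-countability diagonalization over a neighbourhood basis. The only difference is in packaging: you run the second direction contrapositively via a limsup/recurrence argument, whereas the paper argues directly by fixing a compact set of almost full measure in $G/\gC$ that $\pi(Vg_n)$ must eventually miss, forcing $\vol(\pi(Vg_n))<\mu(V)$ and hence non-injectivity.
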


Before proving this lemma, let us try to give an intuitive explanation. Thinking of $G/\gC$ as a generalisation of a manifold, the elements $\gc^g$ (with $\gc\ne 1$) correspond to the generalisation of (homotopically) nontrivial loops. Thus the lemma ``says" that a finite volume manifold is compact iff it admits arbitrarily short nontrivial loops.

\begin{proof}
Suppose that $\pi(g_n)$ does not go to infinity. Since $G$ is locally compact and, by definition of the quotient topology, $\pi$ is an open map, there is a bounded sequence $\{ h_n\}\subset G$ with $\pi(h_n)=\pi(g_n),~\forall n$. 
A subsequance $h_{n_k}$ converges to some $g_0$. Let $W$ be an identity neighborhood which intersects $\gC^{g_0}$ ($=g_0\gC g_0^{-1}$) trivially, and let $V$ be a symmetric identity neighborhood satisfying $V^3\subset W$. For sufficiently large $k$ we have $g_0h_{n_k}^{-1}\in V$ which implies that $\gC^{h_{n_k}}=\gC^{g_{n_k}}$ intersects $V$ trivially. 

Conversely, suppose that $\pi(g_n)\to \infty$. Let $W$ be a an arbitrary identity neighborhood in $G$ and let $V$ be a relatively compact symmetric identity neighborhood satisfying $V^2\subset W$. Let $K$ be a compact subset of $G$ such that $\vol(\pi(K))>\vol(G/\gC)-\mu (V)$. Since $\pi(g_n)\to\infty$, there is $n_0$ such that $n\ge n_0$ implies that $\pi(Vg_n)\cap\pi(K)=\emptyset$. The volumes inequality above then implies that $\vol(\pi(Vg_n))<\mu(V)$ and we conclude that $Vg_n$ does not inject to the quotient, i.e. that $Vg_n\cap Vg_n\gc\ne\emptyset$ for some $\gc\in\gC\setminus\{1\}$, hence $\gc^{g_n}\in V^2\subset W$. Since $G$ is second countable, one deduces that there are $\gc_n\in \gC$ such that $\gc_n^{g_n}\to 1$.
\end{proof}

\begin{exam}
Consider $\SL_2(\BZ)\le_L\SL_2(\BR)$, let \[
g_n=\left(\begin{array}{cc}
n & 0\\
0 & n^{-1}\end{array}\right)
\mbox{ and }\gc_n=\left(\begin{array}{cc}
1 & 0\\
1 & 1\end{array}\right)\]
then $g_n\gc_ng_n^{-1}\to 1$, and hence $\pi(g_n)\to\infty$.
\end{exam}

For general $G$, let us say that $h\in G$ is unipotent if the closure of its conjugancy class contains the identity.
Let us say that a sequence $\{h_n\}\subset G$ is asymptotically (or approximated) unipotent, if there are $g_n\in G$ such that $h_n^{g_n}\to 1$.

\begin{cor}
A lattice $\gC\le_L G$ admits non-trivial approximated unipotents in $G$ iff it is non-uniform.
\end{cor}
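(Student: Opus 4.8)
The statement is an essentially immediate reformulation of the Compactness criterion (Lemma~\ref{thm:compt-crit}). Indeed, saying that $\gC$ admits a nontrivial approximated unipotent in $G$ means precisely that there exist $\gc_n\in\gC\setminus\{1\}$ and $g_n\in G$ with $g_n\gc_ng_n^{-1}\to 1$ (up to replacing $g_n$ by $g_n^{-1}$, depending on one's convention for $h^g$), while saying that $\gC$ is non-uniform means that $G/\gC$ is not compact. So the plan is simply to connect non-compactness of $G/\gC$ with the existence of a sequence $g_n$ along which $\pi(g_n)\to\infty$, and then to quote Lemma~\ref{thm:compt-crit} in each direction.

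For the forward implication I would assume $\gC$ is non-uniform. Since $G$ is locally compact and second countable it is $\sigma$-compact, so one can write $G=\bigcup_n K_n$ with each $K_n$ compact and contained in the interior of $K_{n+1}$; as $\pi$ is continuous, open and surjective, the images $\pi(K_n)$ form an increasing sequence of compact sets whose interiors cover $G/\gC$. If $G/\gC$ were compact it would coincide with some $\pi(K_N)$; since it is not, for every $n$ we may choose $g_n\in G$ with $\pi(g_n)\notin\pi(K_n)$. As any compact subset of $G/\gC$ lies inside some $\pi(K_N)$, this sequence satisfies $\pi(g_n)\to\infty$, and Lemma~\ref{thm:compt-crit} then produces $\gc_n\in\gC\setminus\{1\}$ with $g_n\gc_ng_n^{-1}\to 1$, i.e.\ a nontrivial approximated unipotent sequence in $\gC$.

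For the converse I would start from $h_n\in\gC\setminus\{1\}$ and $g_n\in G$ with $h_n^{g_n}\to 1$; after rewriting the conjugation (replacing $g_n$ by $g_n^{-1}$ if one uses $h^g=g^{-1}hg$) so that it matches the form appearing in Lemma~\ref{thm:compt-crit}, the lemma gives $\pi(g_n)\to\infty$. A space carrying a sequence that eventually leaves every compact subset cannot itself be compact, so $G/\gC$ is non-compact and $\gC$ is non-uniform.

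I do not expect any genuine obstacle: all the content is already in Lemma~\ref{thm:compt-crit}, and this corollary is essentially a dictionary translation of it. The only points deserving a line of care are the elementary topology in the forward direction --- that a locally compact second countable group, hence the quotient $G/\gC$, is exhausted by compacta, so that non-compactness is witnessed by a sequence going to infinity --- and the bookkeeping of the conjugation convention relating the definition of an approximated unipotent to the precise statement of the Compactness criterion.
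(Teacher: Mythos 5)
Your proof is correct and is exactly the argument the paper intends: the corollary is stated as an immediate consequence of the Compactness criterion (Lemma \ref{thm:compt-crit}), with the only added content being the routine observation that non-compactness of $G/\gC$ is equivalent to the existence of a sequence $\pi(g_n)\to\infty$, which you justify via $\sigma$-compactness. The remark about the conjugation convention is harmless bookkeeping and does not affect the argument.
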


It is worth mentioning that a celebrated theorem of Kazhdan and Margulis \cite{Ka-Ma} states that if $G$ is a real algebraic semisimple group, then every non-uniform lattice in $G$ admits non-trivial unipotents.

\begin{exercise}
Let $G$ be a totally disconnected locally compact group and $\gC\le_L G$. Show that if $\gC$ is non-uniform then $\gC$ admits torsion, i.e. non-trivial elements of finite order. Moreover, show that in that case $\gC$ admits element of arbitrarily large finite order.
\end{exercise}

\noindent {\bf Hint:} Make use of V. Dantzig's theorem, namely that every totally disconnected locally compact group admits an open compact subgroup.

\medskip

We shall now explain some basic results established in the beautiful paper A. Selberg \cite{Sel}.

\begin{lem}[Recurrence]
Let $\gC\le_L G$, let $g\in G$ and let $\gO\subset G$ be an open set. Then $\gO^{-1} g^n\gO\cap\gC\ne\emptyset$ infinitely often.
\end{lem}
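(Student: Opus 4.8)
\emph{Proof proposal.} The plan is to recast the assertion as a recurrence statement for a single measure‑preserving transformation on a finite measure space, and then invoke the Poincaré recurrence theorem. We may assume $\gO\neq\emptyset$. Since $\gC$ is a lattice and $G$ is unimodular (Exercise \ref{ex:L=>um}), the coset space $G/\gC$ carries a finite $G$‑invariant Borel measure $m$, which we normalize so that $m(G/\gC)=\vol(G/\gC)$; moreover $m$ is positive on nonempty open sets (this follows from $G$‑invariance together with second countability of $G/\gC$). Let $p\colon G\to G/\gC$ be the quotient map, which is open, and let $T\colon G/\gC\to G/\gC$ be the homeomorphism $x\mapsto gx$; since $m$ is $G$‑invariant, $T$ preserves $m$.

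The first step is to translate the condition $\gO^{-1}g^n\gO\cap\gC\neq\emptyset$. An element of $\gO^{-1}g^n\gO$ has the form $\go_1^{-1}g^n\go_2$ with $\go_1,\go_2\in\gO$, and $\go_1^{-1}g^n\go_2\in\gC$ is equivalent to $g^n\go_2\gC=\go_1\gC$ in $G/\gC$, i.e.\ to $T^n\bigl(p(\go_2)\bigr)=p(\go_1)$. Hence $\gO^{-1}g^n\gO\cap\gC\neq\emptyset$ if and only if $T^n\bigl(p(\gO)\bigr)\cap p(\gO)\neq\emptyset$. Since $p$ is open and $\gO$ is open and nonempty, $A:=p(\gO)$ is a nonempty open subset of $G/\gC$, so $m(A)>0$; thus it remains only to show that $T^nA\cap A\neq\emptyset$ for infinitely many $n$.

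This is exactly Poincaré recurrence: for $m$‑a.e.\ point $x\in A$ the orbit $(T^nx)_{n\ge 1}$ returns to $A$ infinitely often, and such $x$ exist because $m(A)>0$; for each such return time $n$ one has $T^nx\in T^nA\cap A$. (Alternatively, one can argue by hand: if only finitely many $n$ satisfied $T^nA\cap A\neq\emptyset$, pick $N$ with $T^nA\cap A=\emptyset$ for all $n\ge N$; then, $T$ being measure‑preserving, the sets $T^{-iN}A$ for $i\ge 0$ are pairwise disjoint modulo $m$‑null sets and each has measure $m(A)>0$, contradicting $m(G/\gC)<\infty$.) Either way we obtain infinitely many $n$ with $\gO^{-1}g^n\gO\cap\gC\neq\emptyset$, as claimed.

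I do not anticipate a genuine obstacle here; the argument is a standard application of recurrence. The only points deserving a little care are bookkeeping: that it is the coset space $G/\gC$ (rather than $\gC\backslash G$) that must be equipped with the finite invariant measure, which is where unimodularity and hence Exercise \ref{ex:L=>um} enters; that the quotient map is open and $m$ is positive on nonempty open sets, so that $p(\gO)$ genuinely has positive measure; and the elementary but slightly fiddly rewriting of the membership condition $\go_1^{-1}g^n\go_2\in\gC$ in terms of the orbit of $T$.
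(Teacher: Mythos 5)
Your proof is correct and follows essentially the same route as the paper: both translate $\gO^{-1}g^n\gO\cap\gC\ne\emptyset$ into the statement that $g^n\cdot\pi(\gO)$ meets $\pi(\gO)$ in $G/\gC$ and then apply Poincar\'e recurrence (or the underlying pigeonhole argument on translates of a positive-measure set in a finite measure space), which is exactly the paper's sketch. Your extra care about openness of the projection and positivity of the measure on nonempty open sets fills in details the paper takes for granted, so nothing is missing.
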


This is immediate from Poincare recurrence theorem, but let us sketch an argument:

\begin{proof}
Since $\pi(\gO)$ has positive measure while $\vol(G/\gC)$ is finite, we can find $k,m\in \BN$ with arbitrarily large gap, so that $g^k\cdot \pi(\gO)$ and $g^m\cdot \pi(\gO)$ are not disjoint. This means that $\pi(g^{m-k}\gO)\cap\pi(\gO)\ne\emptyset$ which is equivalent to $\gO^{-1} g^n\gO\cap\gC\ne\emptyset$ with
 $n=m-k$.
\end{proof}

\begin{exercise}[A week version of Borel's density theorem]\label{ex:borel-d}
Let $\gC\le_L\SL_n(\BR)$. Deduce from the last lemma that 
\begin{itemize}
\item $\gC$ admits regular elements, and
\item $\text{Span}(\gC)=M_n(\BR)$. (The less motivated student, may find the details in \cite{Sel}.)
\end{itemize} 
\end{exercise}

\begin{prop}\label{prop:cent}
Let $\gC\le_{UL} G$ (a uniform lattice) and $\gc\in\gC$. Let $C_G(\gc)$ be the centralizer of $\gc$ in $G$. Then $\gC\cap C_G(\gc)$ is a uniform lattice in $C_G(\gc)$.
\end{prop}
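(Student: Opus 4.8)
I would prove this by checking the two defining properties of a uniform lattice for $\gC\cap C_G(\gc)$ inside $C_G(\gc)$. Discreteness is immediate: $\gC\cap C_G(\gc)$ is a subgroup of the discrete group $\gC$, and $C_G(\gc)=\{g\in G: g\gc=\gc g\}$ is closed in $G$, hence itself a locally compact second countable group. So the whole content is to show that $C_G(\gc)/(\gC\cap C_G(\gc))$ is compact; once that is done, the fact that a uniform discrete subgroup is automatically a lattice (proved earlier in the lecture) finishes the argument.

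To get cocompactness I would exploit that $\gC$ is \emph{uniform} in $G$: by cocompactness there is a compact set $K\subseteq G$ with $G=K\gC$. Given $h\in C_G(\gc)$, write $h=k\gl$ with $k\in K$, $\gl\in\gC$. The relation $h\gc h^{-1}=\gc$ rearranges to $\gl\gc\gl^{-1}=k^{-1}\gc k\in K^{-1}\gc K$. Thus $\gl\gc\gl^{-1}$ is an element of $\gC$ lying in the compact set $K^{-1}\gc K$, so it belongs to the set $F:=\gC\cap(K^{-1}\gc K)$, which is \emph{finite} because $\gC$ is discrete and $K^{-1}\gc K$ is compact. In other words, every element of $C_G(\gc)$ conjugates $\gc$ (via its $\gC$-part) into one of finitely many elements of $\gC$.

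Now, for each $\gc'\in F$ that is actually $\gC$-conjugate to $\gc$, fix once and for all some $\gl_{\gc'}\in\gC$ with $\gl_{\gc'}\gc\gl_{\gc'}^{-1}=\gc'$. If $h=k\gl$ is as above with $\gl\gc\gl^{-1}=\gc'$, then $\gl_{\gc'}^{-1}\gl$ centralizes $\gc$ and lies in $\gC$, i.e.\ $\gl_{\gc'}^{-1}\gl\in\gC\cap C_G(\gc)$, and $h=(k\gl_{\gc'})(\gl_{\gc'}^{-1}\gl)$. Hence $C_G(\gc)\subseteq L\cdot(\gC\cap C_G(\gc))$, where $L:=\bigcup_{\gc'\in F}K\gl_{\gc'}$ is compact. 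Finally, writing $h=\ell\gd$ with $\ell\in L$ and $\gd\in\gC\cap C_G(\gc)$, one has $\ell=h\gd^{-1}\in C_G(\gc)$, so in fact $C_G(\gc)=(L\cap C_G(\gc))\cdot(\gC\cap C_G(\gc))$; since $L\cap C_G(\gc)$ is compact it maps onto $C_G(\gc)/(\gC\cap C_G(\gc))$, which is therefore compact.

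I do not expect a serious obstacle here — the argument is a variant of the standard trick that cocompactness passes to "syntactically defined" subgroups through the decomposition $G=K\gC$, the finiteness of $F$ being the one nontrivial input. The two points that need care are: (i) the compact set $L$ coming out of the construction does not a priori sit inside $C_G(\gc)$, which is why one intersects with $C_G(\gc)$ at the end, using that both $h$ and $\gd$ centralize $\gc$; and (ii) remembering to invoke the earlier exercise to upgrade "discrete and cocompact" to "lattice", so that co-finiteness of $\gC\cap C_G(\gc)$ in $C_G(\gc)$ — and not merely cocompactness — is actually on record.
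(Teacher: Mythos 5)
Your proof is correct and is essentially the paper's own argument: the paper writes $h=\go\gd$ with $\go$ in a relatively compact fundamental domain $\gO$ for $\gC$, observes that $\gd\gc\gd^{-1}=\go^{-1}\gc\go$ lies in the finite set $\gO^{-1}\gc\gO\cap\gc^{\gC}$, and corrects by finitely many fixed conjugators $\gd_i$ to land in $\gC\cap C_G(\gc)$ — exactly your decomposition $G=K\gC$, finite set $F=\gC\cap K^{-1}\gc K$, and correction by the $\gl_{\gc'}$, followed by the same intersection with $C_G(\gc)$ to get a compact set of coset representatives. The only cosmetic difference is that the paper packages the outcome as a relatively compact fundamental domain $\bigcup_i\gO\gd_i\cap C_G(\gc)$, while you phrase it as cocompactness plus the earlier exercise upgrading a uniform discrete subgroup to a lattice.
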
 

\begin{proof}
There are two ways to prove this, one by constructing a compact fundamental domain for $\gC\cap C_G(\gc)$ in $C_G(\gc)$ and one by showing that the projection of $C_G(\gc)$ to $G/\gC$ is closed. Let us describe the first approach. 

Let $\gO$ be a relatively compact fundamental domain for $\gC$ in $G$. Let $\gd_1,\ldots,\gd_m\in \gC$ be chosen such that $\gd_i\gc\gd_i^{-1},~i=1,\ldots,m$ exhaust the finite set 
$\gO^{-1}\gc\gO\cap\gc^\gC$. We claim that 
$$
 \gO':=\cup_{i=1}^m\gO\gd_i\cap C_G(\gc)
$$ 
is a fundamental domain for $\gC\cap C_G(\gc)$ in $C_G(\gc)$.

Indeed, given $h\in C_G(\gc)$ we can express $h$ as $\go \gd$ with $\go\in\gO$ and $\gd\in\gC$, so $\gd=\go^{-1}h$ and we may find $1\le i\le m$ so that $\gc^\gd=w^{-1}\gc w=\gc^{\gd_i}$ (i.e. $\gd_i^{-1}\gd\in C_G(\gc)$). Thus 
$$
 h=(\go\gd_i)(\gd_i^{-1}\gd)\in\gO'\cdot C_\gC(\gc).
$$

\end{proof}

\begin{exercise}\label{ex:FCT}
Show (with the aid of Exercise \ref{ex:borel-d}) that if $\gC\le_{UL}\SL_n(\BR)$ then $\gC$ admits a diagonalizable subgroup isomorphic to $\BZ^{n-1}$. 
\end{exercise}

Here is a geometric interpretation of the last exercise: Let $X=G/K$ be the symmetric space of $\SL_n(\BR)$ (see Lecture $3$), and let $M=\gC\backslash X$ be a compact $X$-manifold (or orbifold). Then $M$ admits a flat totally geodesic imbedded $(n-1)$-torus. In fact any simple closed geodesic is contained in such a torus. This fact generalizes without difficulties to arbitrary Riemannian symmetric space $X$ where $n-1$ is replaced by rank$(X)=$ rank$_\BR(G)$. However, the analogous statement for general CAT(0) (or even Hadamard) spaces is the wide open, well known, Flat Closing Problem.

\begin{exercise}
Find an element $\gc$ in $\SL_3(\BZ)$ whose centraliser in $\SL_3(\BZ)$ is cyclic. Deduce that the analog of Proposition \ref{prop:cent} cannot hold for general non-uniform lattices. 
\end{exercise}

In spite of that, the analog of Exercise \ref{ex:FCT} does hold for non-uniform lattices as well, by a theorem of Prasad and Raghunathan \cite{PrRa}.

\section{A theorem of Mostow about lattices in solvable groups} 
The discussion in this section is taken from \cite{BCGM1}.

If $G$ is abelian and $H\le G$ a co-finite subgroup then $G/H$ is a group with finite Haar measure, hence compact. A similar result holds for general nilpotent groups:

\begin{prop}\label{prop:nilpotent}
Let $G$ be a nilpotent locally compact group and $H \leq G$ a closed subgroup. Then $H$ has finite covolume if and only if $H$ is cocompact.
\end{prop}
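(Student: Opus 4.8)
The plan is to treat the two implications separately; ``cocompact $\Rightarrow$ finite covolume'' is the easy one. For it I would first record that every locally compact nilpotent group is unimodular: induct on the class, using that $Z(G)$ is a closed central (hence unimodular) subgroup, that $G/Z(G)$ has smaller class, and that in the central extension $1\to Z(G)\to G\to G/Z(G)\to 1$ the modular function of $G$ is the pullback of that of $G/Z(G)$ (since conjugation acts trivially on $Z(G)$), whence $\gD_G\equiv 1$. Thus $G$ and the (nilpotent, hence unimodular) closed subgroup $H$ are unimodular, $G/H$ carries a nonzero $G$-invariant regular Borel measure, and if $G/H$ is compact this measure is finite; so $H$ has finite covolume.

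For the converse I induct on the nilpotency class $c$ of $G$. If $c\le 1$ then $G$ is abelian and $G/H$ is a locally compact group with finite Haar measure, hence compact --- the case recalled just before the statement. Suppose $c\ge 2$ and put $Z=Z(G)$, a nontrivial closed central subgroup with $G/Z$ of class $\le c-1$. The image of $H$ in $G/Z$ need not be closed, so instead I pass to $L:=\ol{HZ}$, a closed subgroup containing $H$ and $Z$ in which $HZ$ is dense. Unimodularity of $G,L,H$ guarantees that the three homogeneous spaces $G/H$, $L/H$, $G/L$ carry invariant measures, and Weil's quotient integration formula for the tower $H\le L\le G$ gives $\vol(G/H)=\vol(L/H)\cdot\vol(G/L)$; since the left side is finite and both factors are positive, $H$ is co-finite in $L$ and $L$ is co-finite in $G$. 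Hence $L/Z$ is a closed co-finite subgroup of $G/Z$, so by the inductive hypothesis $L/Z$ is cocompact in $G/Z$, i.e. $G/L$ is compact.

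It then suffices to prove $H$ is cocompact in $L$: for then the natural surjection $G/H\to G/L$ is proper (lift compact subsets of $G/L$ to compact subsets of $G$ and use compactness of $L/H$), so $G/H$, being the preimage of the compact space $G/L$, is compact. To see $H$ is cocompact in $L$, exploit that $HZ$ is dense in $L$ and $Z$ is central in $L$: approximating $x,y\in L$ by elements of $HZ$ and using the identity $[az,bw]=[a,b]$ for central $z,w$ together with continuity of the commutator map gives $[x,y]\in\ol{[H,H]}$, and since such commutators generate $[L,L]$ we obtain $N:=\ol{[L,L]}=\ol{[H,H]}\subseteq H$ (the inclusion holds because $[H,H]\subseteq H$ and $H$ is closed). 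Now $L/N$ is abelian and $H/N$ is a closed co-finite subgroup of it, hence cocompact by the abelian case, i.e. $L/H$ is compact.

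The step I expect to be the main obstacle is this inductive step, and within it the impossibility of projecting $H$ directly to $G/Z$; the fix --- working inside $L=\ol{HZ}$ and using density of $HZ$ to force $\ol{[L,L]}\subseteq H$ via the commutator identity --- is where nilpotence is genuinely used, since in a general solvable group a co-finite subgroup can be dense modulo a closed normal subgroup, which is precisely why the statement fails there. The remaining ingredients --- existence of invariant measures on the homogeneous spaces involved, validity of Weil's formula, and the behaviour of covolume under the towers $H\le L\le G$ and $N\le H\le L$ --- are routine once unimodularity is available.
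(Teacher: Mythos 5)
Your proof is correct and follows essentially the same route as the paper: induction on the nilpotency class via $Z=Z(G)$ and the closed subgroup $\overline{HZ}$, with the abelian case as the base and co-finiteness passed through the tower $H\le\overline{HZ}\le G$. The only cosmetic difference is the final step, where the paper simply observes that $H$ is normal in $\overline{HZ}$ (so the quotient is a group of finite Haar measure, hence compact), while you reach the same conclusion by showing $\overline{[L,L]}\subseteq H$ and invoking the abelian case for $L/\overline{[L,L]}$.
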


First prove:

\begin{exercise}\label{lem:nilpotent:unimod}
Every nilpotent locally compact group is unimodular.
\end{exercise}

We will also make use of the following:

\begin{exercise}
Let $G$ be a locally compact group and $H\le F\le G$ closed subgroups. If $H$ is co-finite then so is $F$. Furthermore if $F$ normalizes $H$ than $F/H$ is compact.\footnote{A more general, but slightly harder, statement is: For any locally compact groups $H\le F\le G$, $H$ is co-finite in $G$ iff it is co-finite in $F$ and $F$ is co-finite in $G$, see \cite[Lemma 1.6]{Rag}.}
\end{exercise}

%

\begin{proof}[Proof of Proposition~\ref{prop:nilpotent}]
Let us prove the ``only if" direction. Let $H \leq  G$ be a co-finite subgroup,  let $Z = Z(G)$ be the center of $G$ and let $F = \overline{Z\cdot H}$. Arguing by induction on the nilpotency degree, we infer that $F/Z$ is uniform in $G/Z$ and, hence, that $F$ is uniform in $G$. It is thus sufficient to prove that $H$ is uniform in $F$. The latter fact is clear since $H$ is normal in $F$ by definition.
\end{proof}

Let us note that the ``if" part of \ref{prop:nilpotent} holds in the much greater generality of amenable groups. Indeed, if $G$ is amenable and $H\le G$ is a closed uniform subgroup, the compact $G$-space $G/H$ (as any compact $G$-space) admits a $G$ invariant probability measure. The ``only if" direction however, is more involved for non-nilpotent groups. For solvable Lie groups Mostow proved the following classical result:

\begin{thm}[\cite{Mos1}]\label{mostow}
Let $G$ be a connected solvable Lie group. Then every co-finite subgroup of $G$ is uniform.
\end{thm}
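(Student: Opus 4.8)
The plan is to reduce the solvable case to the nilpotent case (Proposition~\ref{prop:nilpotent}) by induction on the derived length of $G$, using the exact same centralizer/normalizer bookkeeping that appears in the nilpotent proof but now applied to the derived series rather than the upper central series. Let $H\le G$ be a co-finite subgroup. If $G$ is nilpotent we are done by Proposition~\ref{prop:nilpotent}, so assume the derived length is at least $2$ and let $N=\overline{[G,G]}$, a closed connected normal subgroup which is solvable of strictly smaller derived length (and $G/N$ is abelian, hence a fortiori covered by the base case). I would first want the image of $H$ in $G/N$ to be co-finite there — this is automatic: the quotient map $G\to G/N$ pushes the finite invariant measure on $G/H$ to a finite invariant measure on $(G/N)/(HN/N)$, so $\overline{HN}/N$ is co-finite, hence uniform in $G/N$ since $G/N$ is abelian. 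Therefore $\overline{HN}$ is uniform in $G$, and by the second cited exercise (co-finiteness passes up to intermediate closed subgroups) it suffices to show that $H$ is uniform in $F:=\overline{HN}$.

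The next step is to analyze $F$. Since $F=\overline{HN}$ with $N$ normal, $H$ is co-finite in $F$ as well (again by the intermediate-subgroup exercise, or directly: a finite-measure fundamental domain for $H$ in $G$ restricts appropriately). Now $F$ is a connected solvable Lie group containing the normal subgroup $N$ with $F/N$ abelian — but crucially $F/\overline{[F,F]}$ need not be small, so I cannot immediately conclude. The cleaner route is a \emph{downward} induction on derived length applied to $N$: inside $F$, consider $H'=\overline{H\cap N}$ — wait, more robustly, observe that $H\cap N$ is co-finite in $N$. This is the technical heart: co-finiteness of $H$ in $F$ together with $F/N$ being (after closure) a quotient on which $H$ surjects cocompactly should force $\overline{H\cap N}$ to be co-finite in $N$; one proves this via a Fubini-type slicing of the invariant measure on $F/H$ along the fibration $F/H\to F/\overline{HN}$... but $\overline{HN}=F$, so the fibers are $N/(N\cap H)$ up to closure, and finiteness of the total measure plus cocompactness of the base forces finiteness (hence, by induction on derived length, cocompactness) of the fiber. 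So $\overline{H\cap N}$ is uniform in $N$.

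Having that, $H$ acts on the compact space $N/\overline{H\cap N}$, and $H$ normalizes $N\cap H$; the quotient $H\backslash F/H$ maps onto... more directly: $F=\overline{HN}$ means $F/N$ is the closure of the image of $H$, i.e. $H$ maps onto a dense-then-closed (cocompact) subgroup of the abelian group $\overline{HN}/N = F/N$, and since $F/N$ is abelian a cocompact subgroup is co-finite, so $\overline{HN/N}=F/N$ forces the image of $H$ in $F/N$ to be \emph{cocompact}. Combining: $H$ is cocompact in $F$ iff its image is cocompact in $F/N$ (true) and it is cocompact on the fiber $N$ modulo $N\cap H$ (true by the previous paragraph) — this is the standard ``cocompact in the middle of a fibration'' fact. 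Hence $H$ is uniform in $F$, and since $F$ is uniform in $G$, $H$ is uniform in $G$.

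The main obstacle I expect is the Fubini/slicing step showing $\overline{H\cap N}$ is co-finite in $N$: passing from ``$G$ a Lie group'' to genuine second-countable local compactness one must be careful that the invariant measure on $F/H$ disintegrates along $F/H \to F/\overline{HN}$ with fibers carrying $N$-invariant measures, and that $\overline{HN}=F$ really does identify those fibers with $N/\overline{H\cap N}$ rather than something coarser. In the Lie group setting this is smooth homogeneous-space theory (Raghunathan \cite[Ch.~1]{Rag}), so the rigor cost is moderate; the conceptual content is entirely the interplay between co-finiteness, the derived series, and the ``co-finiteness is transitive / passes to intermediate subgroups'' lemma already quoted. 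I would present it as: reduce mod $\overline{[G,G]}$, use abelian base case upstairs, slice to get co-finiteness in $\overline{[G,G]}$, induct on derived length there, then reassemble — mirroring the three-line nilpotent argument but with the derived series in place of the center.
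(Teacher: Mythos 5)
Your reduction to showing that $H$ is uniform in $F=\overline{HN}$ is fine, but the step you yourself single out as the technical heart --- that $H$ co-finite in $F$ together with $\overline{HN}=F$ forces $\overline{H\cap N}$ to be co-finite in $N$ --- is false, and no Fubini/disintegration argument can salvage it at that level of generality. Already in $F=\mathbb{R}^2$ with $N=\mathbb{R}\times\{0\}$ and $H=\mathbb{Z}(1,\sqrt{2})+\mathbb{Z}(0,1)$ one has: $H$ is a cocompact lattice in $F$, $HN=\mathbb{R}\times(\mathbb{Z}\sqrt{2}+\mathbb{Z})$ is dense so $\overline{HN}=F$, and yet $H\cap N=\{0\}$. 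The slicing fails because $F/\overline{HN}$ is a point, so there is no fibration of $F/H$ with fibers $N/(N\cap H)$: the actual $N$-orbits in $F/H$ are the sets $NxH/H\cong N/(N\cap xHx^{-1})$, which need not be closed and need not carry finite invariant measures (in the example they are dense lines of infinite length inside a finite-measure torus). The statement you actually need for $N=\overline{[G,G]}$ --- that a lattice in a solvable Lie group meets the closed derived subgroup (or the nilradical) in a lattice --- is a genuinely deeper Mostow-type structure theorem, normally proved after or together with the cocompactness theorem, so invoking it here would be circular. (A smaller symptom of the same issue: in your reassembly the image of $H$ in $F/N$ is merely dense, and concluding cocompactness of $H$ in $F$ would require $HN$ to be closed, which again hinges on the failed fiber statement.)

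This is precisely the trap the paper's proof is designed to avoid: it never intersects $H$ with the derived subgroup and never claims such an intersection is co-finite in anything. Instead it first arranges, after passing to finite index subgroups (Ado--Iwasawa plus Lie's theorem), that $G'$ is nilpotent, and inducts on the nilpotency class of $G'$ using the center $Z$ of $G'$: the induction hypothesis is applied to $G/Z$, $H$ is \emph{enlarged} to $E=\overline{HZ}$, and the intersection $F=H\cap E'$ is used only as a subgroup to divide by, its normality in $E$ coming from the centrality of $Z$ in $G'$; the quotient $H/F$ is then an abelian, compactly generated, co-finite subgroup of $E/F$, and Exercise \ref{ex:slov}$(a)$ finishes. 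Your outer use of the abelian quotient and an induction are in the same spirit, but without a correct substitute for the false slicing lemma the proposed argument does not go through.
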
 
 
Let us give an elementary proof to Mostow's theorem.

\begin{exercise}\label{ex:slov}
\begin{itemize}
\item[$(a)$] Show that if $G$ is a connected Lie group and $\gC\le_L G$ is a finitely generated (or more generally compactly generated) abelian co-finite subgroup, then $\gC$ is uniform. (One can dig out an argument for this fact from the proof of Theorem 3.1 in \cite{Rag}, but I would recommend trying to establish a direct argument. In fact, this is true also for general locally compact $G$.) 
\item[$(b)$] Show that a connected solvable Lie group is Noetherian in the sense that every closed subgroup is compactly generated. (Hint: deduce the general case from the abelian case using induction on the solvability degree.)
\end{itemize}
\end{exercise}

\begin{proof}[Proof of Theorem \ref{mostow}]
We shall prove the result for every compactly generated solvable Lie group (note that a connected Lie group is compactly generated).
Let $G$ be a compactly generated solvable Lie group and $H\le G$ a co-finite subgroup. Up to replacing $G$ and $H$ by finite index subgroups we may assume that the commutator $G'$ is nilpotent. (Indeed, by the Ado--Iwasawa theorem $G$ admits an almost faithful complex linear representation $\rho:G\to\GL_d(\BC)$, and after replacing $G$ by a finite index subgroup, the Zariski closure of the image is connected, in which case, by Lie's theorem the commutator is nilpotent.)   
We may argue by induction on the nilpotency degree of $G'$, where the base case when $G'$ is trivial follows from Proposition \ref{prop:nilpotent}.
Let $Z$ be the center of $G'$. By induction $(G/Z)/(\overline{HZ}/Z)$ is compact, hence, we are left to show that $H$ is uniform in $E:=\overline{HZ}$, which is again compactly generated by Exercise \ref{ex:slov} $(b)$. Clearly $F=H\cap E'$ is normal in $E$. Dividing by $F$ we are left to prove that $H/F$ is cocompact in $E/F$. Since $H/F$ is abelian, the result follows from Exercise \ref{ex:slov} $(a)$.
\end{proof}

The analog of Mostow's theorem holds whenever $G$ is a linear group over a local field, and when $G$ is compactly generated with nilpotent commutator (see \cite{BCGM1}). However, in contrary to a conjecture of Benoist and Quint there are solvable groups which admits non-uniform lattices:

\begin{exam}[See \cite{BCGM1}]\label{exam:nonuniform-sol}
Let $p_n,~n\in\BN$ be primes such that $\prod_{n=1}^\infty\frac{p_n}{p_n-1}<\infty$. Let $G$ be the compact by discrete metabelian group
$$
 G=(\prod_{n=1}^\infty \BF_{p_n}^*)\ltimes (\bigoplus_{n=1}^\infty\BF_{p_n})
$$
and let $\gC$ be the set of sequences $(a_n,a_n-1)$ where $a_n\in\BF_{p_n}$ and $a_n=1$ for all but finitely manny $n$'s. It can be shown that $\gC$ is a non-uniform lattice in $G$.
\end{exam} 

\begin{exercise}[Completing details in the Example \ref{exam:nonuniform-sol}] 

$(1)$ Show that in the affine group $\BF \ltimes \BF^*$ over a field $\BF$, the set $\{(a,a-1):a\in \BF\}$ forms a subgroup. Deduce that $\gC$ is a subgroup of $G$.

$(2)$ Show that $\gC$ is discrete in $G$.

$(3)$ For $m\in\BN$ let 
$$
 G_m:=(\prod_{n=1}^\infty \BF_{p_n}^*)\ltimes (\bigoplus_{n=1}^m\BF_{p_n})~~~\text{and}~~~ \gC_m=\gC\cap G_m.
$$
Show that $[G_{m}:G_{m-1}]=p_m$ and $[\gC_{m}:\gC_{m-1}]=p_m-1$.

$(4)$ Deduce that if we normalise that Haar measure on $G$ so that its compact subgroup $\prod_{n=1}^\infty \BF_{p_n}^*$ has measure $1$ then 
$$
 \vol (G_m/\gC_m)=\frac{\vol(G_m)}{|\gC_m|}=\prod_{n=1}^m\frac{p_n}{p_n-1}.
$$

$(5)$ Making use of the data that $G$ is a direct limit of the open compact subgroups $G_n$, deduce that:
\begin{itemize}
\item $\gC$ is nonuniform in $G$ --- indeed, neither of the $G_N$ contains a fundamental domain for $\gC$ in $G$, since the sequence of co-volumes in $(4)$  does not stabilises,
\item $\vol (G/\gC)=\prod_{n=1}^\infty\frac{p_n}{p_n-1}<\infty$ and hence $\gC$ is a lattice.
\end{itemize} 
\end{exercise}

\section{Existence of lattices}
The very existence of (uniform and non-uniform) lattices is an interesting question. As we have seen in Exercise \ref{ex:L=>um} groups which are not unimodular cannot admit lattices. There are also examples of nilpotent Lie groups which admit no lattices (see \cite[Ch. 2]{Rag}). The discussion above shows that certain solvable groups admit only uniform lattices.
In \cite{BCGM2} it is shown that there are locally compact simple (amenable as well as non-amenable) groups which admit no lattices at all.
In the remaining lectures we will mostly restrict ourselves to the case where $G$ is a semisimple Lie group. By a classical theorem of Borel \cite{Bor} every connected semisimple Lie group admits plenty of uniform and non-uniform lattices. 

\section{Arithmeticity}
One of the highlights of the theory of lattices is the connection with arithmetic groups. This is illustrated in the following celebrated theorems\footnote{I recommend the book by D. Witti--Morris \cite{Wi} for an introduction to the theory of arithmetic groups}:

\begin{thm}[Borel--Harish-Chandra, see \cite{PR}]\label{thm:B-HC}
Let $\BG$ be an algebraic group defined over $\BQ$ which has no $\BQ$-characters. Then $\BG(\BZ)\le_L\BG(\BR)$. Furthermore,
$\BG(\BZ)\le_{UL}\BG(\BR)$ iff $\BG$ has no $\BQ$-co-characters.
\end{thm}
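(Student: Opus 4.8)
The plan is to reduce to the case of a reductive $\BQ$-group inside $\SL_n$, and then apply the reduction theory of Borel--Harish-Chandra. First I fix a faithful $\BQ$-rational representation $\BG\hookrightarrow\SL_n$ and set $\BG(\BZ)=\BG\cap\SL_n(\BZ)$, a discrete subgroup of $\BG(\BR)$; since the conclusion depends only on the commensurability class of $\BG(\BZ)$, the choice of representation is immaterial. Next I peel off the unipotent radical $R_u(\BG)$, which in characteristic zero is defined over $\BQ$ and admits a $\BQ$-Levi subgroup $L$ with $\BG=R_u(\BG)\rtimes L$ (Mostow): $R_u(\BG)(\BZ)$ is a lattice in $R_u(\BG)(\BR)$ (classical for unipotent groups) and it is cocompact by Proposition~\ref{prop:nilpotent}, so $\BG(\BR)/\BG(\BZ)$ has finite volume (resp.\ is compact) iff the corresponding quotient of $L$ does. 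Moreover a character kills unipotent elements, so $\BG$ has a nontrivial $\BQ$-character iff $L$ does; and a nontrivial cocharacter $\BG_m\to\BG$ cannot land in $R_u(\BG)$ (a torus admits no nontrivial map to a unipotent group), so $\BG$ has a nontrivial $\BQ$-cocharacter iff $L$ does. Hence we may assume $\BG$ reductive.

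Let $P_0=M_0A_0N_0$ be a minimal parabolic $\BQ$-subgroup, with $A_0$ a maximal $\BQ$-split torus, $N_0$ its unipotent radical, and $M_0$ the $\BQ$-anisotropic part of the Levi; let $\Delta$ denote the simple $\BQ$-roots of $A_0$ on $N_0$ and $K\le\BG(\BR)$ a maximal compact subgroup. I invoke two facts from reduction theory: (i) for suitable $t>0$ and compact $\omega\subset M_0(\BR)N_0(\BR)$, the Siegel set $\mathfrak{S}=KA_{0,t}\omega$, where $A_{0,t}=\{a\in A_0(\BR)^{\circ}:\alpha(a)\le t\text{ for all }\alpha\in\Delta\}$, satisfies $\BG(\BR)=\mathfrak{S}\cdot\BG(\BZ)$; and (ii) the Siegel property, $\#\{\gamma\in\BG(\BZ):\mathfrak{S}\gamma\cap\mathfrak{S}\ne\emptyset\}<\infty$. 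These are the substantive inputs; their proofs combine Mahler's compactness criterion with classical reduction theory in $\SL_n$ and the combinatorics of $\BQ$-parabolics. Granting them, the projection $\mathfrak{S}\to\BG(\BR)/\BG(\BZ)$ is onto by (i) and has uniformly bounded fibres by (ii), so $\BG(\BZ)$ is a lattice in $\BG(\BR)$ precisely when $\mu(\mathfrak{S})<\infty$.

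To evaluate $\mu(\mathfrak{S})$: writing Haar measure on $\BG(\BR)$ in $KA_0M_0N_0$ coordinates and using that $K$ and $\omega$ are compact, the only question is convergence of $\int_{A_{0,t}}\prod_{\alpha}\alpha(a)^{n_\alpha}\,da$, where the product is over the $\BQ$-roots $\alpha$ on $N_0$ with $n_\alpha$ the multiplicity. Passing to the Lie algebra, this equals $\int e^{\langle 2\rho,H\rangle}\,dH$ over the cone $\{H\in\mathfrak{a}_0:\alpha(H)\le\log t\text{ for all }\alpha\in\Delta\}$, with $2\rho=\sum_\alpha n_\alpha\alpha$. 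The linear map $H\mapsto(\alpha(H))_{\alpha\in\Delta}$ has kernel the Lie algebra of the largest subtorus of $A_0$ that is central in $\BG$; if this kernel vanishes, the substitution $x_\alpha=\alpha(H)$ turns the integral into a product of convergent one-dimensional integrals $\int_{-\infty}^{\log t}e^{c_\alpha x_\alpha}\,dx_\alpha$ (each $c_\alpha>0$, as $\alpha$ itself appears in $2\rho$), whereas if the kernel is nonzero the integrand is constant along an unbounded line contained in the region, so the integral diverges. For reductive $\BG$, having a positive-dimensional central $\BQ$-split torus is equivalent to having a nontrivial $\BQ$-character; hence $\mu(\mathfrak{S})<\infty$ iff $\BG$ has no $\BQ$-character, which proves the first assertion. (The hypothesis is sharp: if $\chi$ is a nontrivial $\BQ$-character, then $\BG(\BZ)\subseteq\BG(\BR)^1:=\{g:|\chi(g)|=1\}$ and $\BG(\BR)/\BG(\BR)^1$ is noncompact of infinite measure, so $\BG(\BZ)$ cannot be a lattice.)

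It remains to treat uniformity, assuming $\BG$ has no $\BQ$-character. If $\BG$ also has no $\BQ$-cocharacter, it has no nontrivial $\BQ$-split torus, so $A_0$ is trivial; then $\mathfrak{S}=K\omega$ is compact and $\BG(\BR)=\mathfrak{S}\cdot\BG(\BZ)$ forces $\BG(\BR)/\BG(\BZ)$ to be compact. Conversely, if $\BG$ has a nontrivial $\BQ$-cocharacter, then it has a nontrivial $\BQ$-split torus, so $A_0\ne\{1\}$, hence $N_0\ne\{1\}$ and $N_0(\BZ)\ne\{1\}$; since $\BG$ has no $\BQ$-character, $A_0$ has dimension $|\Delta|$, so there is a $\BQ$-cocharacter $\lambda:\BG_m\to A_0$ with $\langle\alpha,\lambda\rangle>0$ for every $\alpha\in\Delta$, and such a $\lambda$ contracts all of $N_0$: for $1\ne u\in N_0(\BZ)$ we get $\lambda(1/m)\,u\,\lambda(m)\to1$ as $m\to\infty$. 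Thus $\BG(\BZ)$ contains a nontrivial element that is unipotent in $\BG(\BR)$, hence an approximated unipotent, so by the Corollary to the Compactness Criterion (Lemma~\ref{thm:compt-crit}) the lattice $\BG(\BZ)$ is non-uniform. I expect the true obstacle to be precisely the two reduction-theory inputs (i) and (ii); once these are in hand, everything downstream is bookkeeping with $\BQ$-roots, Haar measure, and the results already established in this lecture.
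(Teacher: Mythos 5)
There is nothing to compare against inside the paper: Theorem \ref{thm:B-HC} is stated there as an imported result with a pointer to Platonov--Rapinchuk, and no proof is given. Judged on its own, your outline follows the standard Borel--Harish-Chandra route (split off the unipotent radical, then reduction theory for a reductive group via Siegel sets), and the deductions you make \emph{from} your two invoked facts are essentially sound: the identification of the kernel of $H\mapsto(\alpha(H))_{\alpha\in\Delta}$ with the Lie algebra of a central $\BQ$-split torus, the convergence of the Siegel-set volume integral being governed by the strict positivity of the coefficients of $2\rho$ in the simple $\BQ$-roots, and the production of approximated unipotents from $A_0\ne\{1\}$ to get non-uniformity via the corollary to Lemma \ref{thm:compt-crit} are all correct in outline.

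The gap is that the inputs you grant yourself are where the theorem actually lives, and a few of them are used silently or misstated. Your fact (i) is in general $\BG(\BR)=\mathfrak{S}\cdot C\cdot\BG(\BZ)$ for a \emph{finite} set $C\subset\BG(\BQ)$ (a class-number phenomenon for minimal $\BQ$-parabolics modulo $\gC$), not a single Siegel set; this is harmless for your estimates but should be said, and note that in the anisotropic case fact (i) \emph{is} the Godement compactness criterion (its proof needs Mahler together with the Borel--Tits fact that a nontrivial $\BQ$-unipotent forces a proper $\BQ$-parabolic), so your ``if no cocharacter then cocompact'' direction is carried entirely by the black box. In the reduction to the reductive case you use, without acknowledgment, that the image of $\BG(\BZ)$ in $L\cong\BG/R_u(\BG)$ is an arithmetic subgroup of $L$ (Borel's theorem on images under surjective $\BQ$-morphisms), or at least commensurable with $L(\BZ)$; without this the Siegel-set analysis of $L(\BZ)\backslash L(\BR)$ does not apply to the quotient you actually produce, and the fact that $R_u(\BG)(\BZ)$ is a (cocompact) lattice in $R_u(\BG)(\BR)$ is itself a theorem, not a remark. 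Finally, $N_0(\BZ)\ne\{1\}$ deserves a line (take $1\ne u=\exp X$ with $X$ nilpotent and rational, and pass to $\exp(mX)$ for a suitable integer $m$). So: right architecture and correct bookkeeping, but as it stands this is a proof relative to the core of reduction theory rather than a proof of the theorem.
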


\begin{defn}
Let $G$ be a Lie group. We shall say that a subgroup $\gC\le G$ is arithmetic if there is a $\BQ$ algebraic group $\BH$ and a surjective homomorphism with compact kernel $f:\BH(\BR)\twoheadrightarrow G$ such that $f(\BH(\BZ))$ contains $\gC$ as a subgroup of finite index.
\end{defn}

If $G$ has no non-trivial real characters (homomorphisms to $\BR^*$), it follows that so does $\BH(\BR)$ and hence by Theorem \ref{thm:B-HC} that $\BH(\BZ)$ is a lattice in $\BH(\BR)$. Since $f$ has compact kernel, the image $f(\BH(\BZ))$ is still discrete, and hence a lattice in $G=f(\BH(\BR))$. 

\begin{exam}
Let $f(x,y,z)=x^2+y^2-\sqrt{2}z^2$, and consider the $\BQ[\sqrt{2}]$-group $\BG=\SO(f)$ and the subgroup $\gC=\BG(\BZ[\sqrt{2}])$. Let $\BH=\mathcal{R}_{\BQ[\sqrt{2}]/\BQ}\BG$ be the algebraic group obtained by restriction of scalars (see \cite[ 2.1.2]{PR}) and let $H=\BH(\BR)$. Then $H\cong \SO(2,1)\times SO(3)$ and $\gC$ is isomorphic to $\BH(\BZ)$. Since $\SO(3)$ is compact, $\gC$ projects faithfully to an arithmetic lattice in $\SO(2,1)$ and it can be shown that it has no unipotent. 
Now $\SO(2,1)$ acts properly by isometries on the hyperbolic plan $\BH^2$ and hence $\gC\backslash \BH^2$ is a compact hyperbolic orbiflod. This yields an interesting information about the algebraic structure of $\gC$. For instance, it is known that every such orbifold is finitely covered by a Riemann surface, and hence $\gC$ admits a finite index subgroup which is a surface group. Moreover, every surface group admits a presentation with $2g$ generators and one relator, where $g$ is the genus and can be computed from the area of the surface (the covolume of the corresponding lattice, with respect to an appropriate normalisation), using the Gauss--Bonnet theorem.
\end{exam}

Amazingly, in some cases the converse of Borel--Harish-Chandra theorem is also true. Recall that the rank of a Lie group is the minimal dimension of a centralizer of an element. The rank of $SL_n(\BR)$ is $n-1$ and the rank of $\SO(n,1)$ is one.

\begin{thm}[Margulis arithmeticity theorem \cite{Mar1}]\label{thm:arith}
If $G$ is a simple Lie group of rank $\ge 2$ then every lattice is arithmetic.
\end{thm}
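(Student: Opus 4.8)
The plan is to derive arithmeticity from Margulis' \emph{superrigidity theorem}, which carries the real weight here: since $G$ is simple of real rank $\ge 2$, any homomorphism $\rho\colon\gC\to\BG'(k)$ into the $k$-points of a connected algebraic group over a local field $k$ whose image is Zariski dense and not relatively compact extends to a continuous homomorphism $G\to\BG'(k)$ when $k$ is archimedean, and must have relatively compact image when $k$ is non-archimedean. Besides superrigidity I will use three facts available above: $\gC$ is Zariski dense in $G$ (Borel density theorem), $\BH(\BZ)\le_L\BH(\BR)$ for $\BQ$-groups $\BH$ without $\BQ$-characters (Theorem~\ref{thm:B-HC}), and Margulis' normal subgroup theorem; I also use that $\gC$ is finitely generated (immediate in higher rank from Kazhdan's property (T)). Fix an almost-faithful embedding $G\hra\GL_N(\BR)$, so $\gC\subset\GL_N(\BR)$; the Zariski closure $\BG$ of $\gC$ is then, up to isogeny and compact factors, equal to $G$, and in particular $\BG$ is simple.

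\emph{Step 1: the representation is defined over a number field.} Let $E\subset\BC$ be the field generated by $\{\mathrm{tr}(\Ad\gc):\gc\in\gC\}$; as $\gC$ is finitely generated, $E$ is finitely generated over $\BQ$. For each embedding $\gs\colon E\hra\BC$ the representation $\gs\circ\Ad$ either has relatively compact image --- so $\gs(\mathrm{tr}\,\Ad\gc)$ stays bounded as $\gc$ varies --- or, by archimedean superrigidity, extends continuously to $G$; the second alternative holds for only finitely many $\gs$, since a simple Lie group has only finitely many non-conjugate representations of a given dimension. Likewise, non-archimedean superrigidity forces relatively compact image at every finite place of $E$. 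Were the transcendence degree of $E$ positive, one could choose a valuation making the traces unbounded in the corresponding completion, contradicting this; hence $E$ is a number field, and a standard argument produces a number field $K$ (finite over $E$) with a conjugate of $\gC$ lying in $\BG(K)\subset\GL_N(K)$.

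\emph{Steps 2 and 3: restriction of scalars and removal of the extra factors.} By non-archimedean superrigidity, the image of $\gC$ in $\GL_N(K_v)$ is relatively compact for every finite place $v$; since $\gC$ is finitely generated, this bounds denominators, so $\gC\subset\BG(\OO_{K,S})$ for a finite set $S$ of places (which, as noted below, may be taken archimedean). Let $\BH:=\mathcal{R}_{K/\BQ}\BG$, a $\BQ$-simple group without $\BQ$-characters, so $\BH(\BR)=\prod_{v\mid\infty}\BG(K_v)$ and, by Theorem~\ref{thm:B-HC}, $\BH(\BZ)$ is a lattice in $\BH(\BR)$; using uniqueness of invariant measures one checks that $\gC$ is commensurable with $\BH(\BZ)$. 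Distinguish the archimedean place $v_0$ through which the chosen embedding factors, so that $\BG(K_{v_0})$ maps onto $G$ with finite kernel, and suppose some other factor $\BG(K_v)$ were non-compact. Since $\BH$ is $\BQ$-simple, $\BH(\BZ)$ --- hence $\gC$ --- projects densely to $\BG(K_v)$; call this projection $p$. By the normal subgroup theorem $\ker p$ is finite, and by archimedean superrigidity $p$ extends to a continuous homomorphism $G\to\BG(K_v)$, which (as $G$ is simple and the image is dense) is an isogeny onto $\BG(K_v)^\circ$ and in particular proper; but then $p(\gC)$ would be discrete, contradicting density. Hence $\BG(K_v)$ is compact for every $v\ne v_0$, and the map $f\colon\BH(\BR)\to G$ collapsing the (compact) product of these factors and composing with the isogeny $\BG(K_{v_0})^\circ\to G$ is a surjection with compact kernel carrying a finite-index subgroup of $\BH(\BZ)$ onto a group containing $\gC$ with finite index --- which is arithmeticity as defined above. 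The same extension argument applied over $\BQ_p$ shows that any finite place we were forced to invert also contributes only a compact factor, justifying the parenthetical claim above.

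\emph{Main obstacle.} The serious difficulty is superrigidity itself, and this is exactly where $\mathrm{rank}\ge 2$ enters. One constructs a $\gC$-equivariant measurable boundary map from the Furstenberg boundary $G/P$ into a projective variety attached to $\rho$ (via amenability of $P$ and a martingale-convergence argument), and then --- exploiting products of root subgroups and the Mautner phenomenon, available only when $\mathrm{rank}\ge 2$ --- one promotes this measurable map to a rational map defined over $G$, producing the algebraic extension of $\rho$. The other delicate point is the number-field conclusion of Step~1, where one must control all archimedean and non-archimedean Galois conjugates of $\Ad$ simultaneously.
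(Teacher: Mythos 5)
Your overall strategy is the paper's: deduce arithmeticity from superrigidity via a number field, bounded denominators, restriction of scalars, and compactness of the complementary archimedean factors. Your Step 1 is a legitimate variant: you obtain the number field from the trace field of $\Ad(\gC)$ together with superrigidity over completions at transcendental embeddings (Margulis' original route), whereas the paper gets $\gC\le\BG(\BK)$ from local rigidity and density of algebraic points (Proposition \ref{Prop:alg} and Lemma \ref{lem:X(K)-dense}); either works, though yours imports more unproved standard facts (e.g. that $\BG$ and the conjugation can be arranged over a finite extension of the trace field).

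The genuine gap is in your Steps 2--3, and it is circular at the decisive point. You assert, ``using uniqueness of invariant measures,'' that $\gC$ is commensurable with $\BH(\BZ)$ \emph{before} knowing that the factors $\BG(K_v)$, $v\ne v_0$, are compact. But the diagonally embedded $\gC$ is a lattice in $\BH(\BR)=\prod_{v\mid\infty}\BG(K_v)$ only if $\prod_{v\ne v_0}\BG(K_v)$ is compact --- exactly what you are trying to prove; if some factor is non-compact, $\gC$ has infinite covolume in $\BH(\BR)$ and hence infinite index in $\BH(\BZ)$, and no uniqueness-of-measure argument can rule this out. Consequently the topological density of $p(\gC)$ in the putative non-compact factor (which you deduce from $\BQ$-simplicity via ``$\BH(\BZ)$ --- hence $\gC$ ---'') is unjustified, and it is precisely this density that supplies both the unboundedness needed to invoke superrigidity and the final contradiction with discreteness. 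The non-circular argument, as in the paper, applies superrigidity directly to the Galois-twisted embedding $\gC\hra\BG(k_\nu)$, which is Zariski dense: if its image is unbounded, the continuous extension $G\to\BG(k_\nu)$ forces the embedding $\BK\hra k_\nu$ to extend to a field isomorphism $\BR\to k_\nu$, contradicting $\nu\ne v_0$; if its image is bounded, its closure is a compact group preserving a positive definite form, and Zariski density then forces $\BG(k_\nu)$ itself to be compact (this second case is missing from your proposal altogether, since boundedness of the image does not by itself make the factor compact). Only after the complementary factors are known to be compact does $\gC'$ become a lattice in $\BH(\BR)$, whence $[\BH(\BZ):\gC']<\infty$ and the map $f:\BH(\BR)\twoheadrightarrow G$ has compact kernel, giving arithmeticity as defined.
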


In the fourth lecture we will discuss some rigidity theorems and describe how Margulis' superrigidity theorem implies the arithmeticity theorem. 
Superrigidity and arithmeticity hold also for lattices in the rank one groups $Sp(n,1)$ and $F_4^{-20}$ as proved in \cite{cor} and \cite{GS}.
On the other hand it is clear that $\SL_2(\BR)$ admits non-arithmetic lattices. Indeed, Teichmuller theory produces continuously many pairwise non-isometric hyperbolic structure on a surface of genus $g\ge 2$, yielding continuously many non-conjugate lattices, while it can be shown that only countably of them can be arithmetic. A beautiful construction of Gromov and Piatetsky-Shapiro \cite{Gr-PS} produces finite volume non-arithmetic real hyperbolic manifolds in every dimension $n\ge 3$, by gluing two pieces of arithmetic ones, proving that $\SO(n,1),~n\ge 2$ admit non-arithmetic lattices.


\lecture{On the Jordan--Zassenhaus--Kazhdan--Margulis theorem}

\section{Zassenhaus neighborhood}
Given two subsets of a group $A,B\subset G$ we denote by 
$$
 \{[A,B]\}:=\{[a,b]:a\in A,b\in B\}
$$ 
the set of commutators $[a,b]=aba^{-1}b^{-1}$. We define recursively $A^{(n)}:=\{[A,A^{(n-1)}]\}$ where $A^{(0)}:=A$.

By the Ado--Iwasawa theorem every Lie group is locally isomorphic to a linear Lie group. By explicit computation using sub-multiplicativity of matrix norms, one proves:

\begin{lem}\label{lem:Z1}
Every Lie group $G$ admits an open identity neighborhood $U$ such that $U^{(n)}\to 1$ in the sense that it is eventually included in every identity neighborhood.
\end{lem}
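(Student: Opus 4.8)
The plan is to follow the route indicated just above: reduce to a linear group via Ado--Iwasawa, then read the desired geometric contraction off the single quadratic estimate $\|[A,B]-I\|\lesssim\|A-I\|\,\|B-I\|$ valid near the identity, which is nothing but submultiplicativity of an operator norm.

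First I would reduce to the linear case. By Ado--Iwasawa there are an open neighbourhood $W$ of $1$ in $G$, a linear Lie group $H\le\GL_d(\BC)$, and a homeomorphism $\phi\colon W\to\phi(W)$ onto an open subset of $H$ with $\phi(gh)=\phi(g)\phi(h)$ whenever $g,h,gh\in W$; after shrinking $W$ to $W\cap W^{-1}$ I may assume $W=W^{-1}$, so also $\phi(1)=I$ and $\phi(g^{-1})=\phi(g)^{-1}$. Fix the operator norm on $M_d(\BC)$ and put $B_t:=\{X:\|X-I\|<t\}$. Choose a symmetric open $V\ni1$ with $V^4\subseteq W$, then $\epsilon\in(0,1/8)$ small enough that $B_\epsilon\cap H\subseteq\phi(V)$, and set $U:=\phi^{-1}(B_\epsilon\cap H)$, an open identity neighbourhood of $G$ contained in $V$. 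The role of $V^4\subseteq W$ is that for $u,c\in U$ the commutator $[u,c]$ together with all its partial products $u,\,uc,\,ucu^{-1},\,ucu^{-1}c^{-1}$ lies in $V^4\subseteq W$, so that $\phi([u,c])=[\phi(u),\phi(c)]$.

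The analytic input is the elementary identity $ABA^{-1}B^{-1}-I=(ab-ba)A^{-1}B^{-1}$ for $A=I+a$, $B=I+b$, which with $\|A^{-1}\|\le(1-\|a\|)^{-1}$ gives
\[
\|[A,B]-I\|\ \le\ \frac{2\,\|a\|\,\|b\|}{(1-\|a\|)(1-\|b\|)}\ <\ 4\,\|a\|\,\|b\|\qquad\bigl(\|a\|,\|b\|<1/8\bigr).
\]
I would then induct on $n$, carrying along both $U^{(n)}\subseteq U$ and $r_n:=\sup_{X\in U^{(n)}}\|\phi(X)-I\|\le(4\epsilon)^n\epsilon$, the case $n=0$ being clear. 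For the step, a typical element of $U^{(n)}$ is $[u,c]$ with $u\in U$ and $c\in U^{(n-1)}\subseteq U$ (inductive hypothesis), and the displayed estimate gives $\|\phi([u,c])-I\|=\|[\phi(u),\phi(c)]-I\|<4\,\|\phi(u)-I\|\,\|\phi(c)-I\|<4\epsilon\,r_{n-1}\le(4\epsilon)^n\epsilon$; since $4\epsilon<1$ this is $<\epsilon$, whence $[u,c]\in U$. Thus $r_n\to0$, i.e.\ $U^{(n)}$ eventually lies in $\phi^{-1}(B_\delta\cap H)$ for every $\delta>0$, and since these sets form a neighbourhood basis at $1\in G$ this says exactly that $U^{(n)}\to1$.

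I expect the only real friction to be bookkeeping rather than estimation: ensuring each iterated commutator is genuinely formed inside the chart $W$ on which $\phi$ is a homomorphism --- which is handled by pairing $V^4\subseteq W$ with the a priori inclusion $U^{(n)}\subseteq U$ built into the induction --- and keeping straight that $\phi^{-1}$ of small matrix balls is a neighbourhood basis at $1\in G$. The actual content, namely that commutators of elements $\epsilon$-close to $I$ are $O(\epsilon^2)$-close to $I$ so that iteration contracts geometrically, is precisely the short submultiplicativity computation above.
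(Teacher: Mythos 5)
Your proof is correct and follows essentially the same route as the paper: reduce to a linear group via Ado--Iwasawa and then iterate the quadratic commutator estimate $\|aba^{-1}b^{-1}-1\|\lesssim\|a-1\|\,\|b-1\|$ coming from submultiplicativity of the matrix norm. The only difference is that you spell out the chart bookkeeping for the local isomorphism, which the paper leaves implicit.
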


As pointed out, it is enough to explain this for $G=\GL_d(\BR)$. Write 
$$
 a=1+X,~b=1+Y
$$ 
with $X,Y\in M_d(\BR)$ and suppose $\|X\|\le\gep$ and $\|Y\|\le\gd\le\gep$. By continuity of the inverse map, for $\gep$ sufficiently small we have $\| a^{-1}\|,\|b^{-1}\|\le 2$. Thus by sub-multiplicity of the norm:
$$
 \| aba^{-1}b^{-1}-1\|=\| (ab-ba)a^{-1}b^{-1}\|=\| (XY-YX)a^{-1}b^{-1}\|\le 2\| X\|\cdot\| Y\|\cdot\| a^{-1}\|\cdot\| b^{-1}\|\le 8\gep\gd,
$$
hence for $\gep<\frac{1}{8}$ and $U=U_\gep:=\{ a\in\GL_d(\BR):\| a-1\|<\gep\}$ we see that $\gO^{(n)}$ tends to $1$ at an exponential speed.

\begin{exercise}
Let $\gD$ be a group generated by a set $S\subset\gD$. If $S^{(N)}=\{1\}$ for some $n$ then $\gD$ is nilpotent of class $\le N$.
\end{exercise}

\begin{cor}
If $\gD\le G$ is a discrete subgroup then $\langle\gD\cap U\rangle$ is nilpotent.
\end{cor}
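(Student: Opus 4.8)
The plan is to apply Lemma~\ref{lem:Z1} together with the preceding exercise. Let $U$ be an open identity neighborhood in $G$ as furnished by Lemma~\ref{lem:Z1}, so that $U^{(n)} \to 1$; I may shrink $U$ further if convenient, since any smaller open identity neighborhood still has this property (its iterated commutator sets are contained in those of $U$). Set $\gD := \langle \gD_0 \cap U\rangle$ where $\gD_0 \le G$ is the given discrete subgroup, and write $S := \gD_0 \cap U$ for the generating set. The goal is to produce an integer $N$ with $S^{(N)} = \{1\}$, after which the exercise immediately gives that $\gD$ is nilpotent of class $\le N$.

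First I would observe that $S^{(n)} \subset U^{(n)}$ for every $n$, directly from the recursive definition $A^{(n)} = \{[A, A^{(n-1)}]\}$ and the fact that $S \subset U$ (so $S^{(0)} \subset U^{(0)}$, and inductively a commutator $[s, x]$ with $s \in S \subset U$ and $x \in S^{(n-1)} \subset U^{(n-1)}$ lies in $\{[U, U^{(n-1)}]\} = U^{(n)}$). At the same time, each element of $S^{(n)}$ is a product of elements of $\gD_0$ (being built from commutators of elements of $\gD_0$), hence lies in $\gD_0$. Therefore $S^{(n)} \subset \gD_0 \cap U^{(n)}$.

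Now since $\gD_0$ is discrete, the identity is isolated: there is an identity neighborhood $W$ with $\gD_0 \cap W = \{1\}$. Because $U^{(n)} \to 1$, there is $N$ with $U^{(N)} \subset W$, and then
\[
S^{(N)} \subset \gD_0 \cap U^{(N)} \subset \gD_0 \cap W = \{1\}.
\]
By the exercise, $\gD = \langle S\rangle$ is nilpotent of class $\le N$.

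There is no real obstacle here; the only point requiring a moment's care is the containment $S^{(n)} \subset U^{(n)}$, i.e.\ checking that passing to the generated subgroup does not enlarge the iterated commutator sets beyond those of $U$ — but this is purely formal from the definitions, since $S^{(n)}$ is defined using only $S$, not $\langle S\rangle$, and $S \subset U$. (If one instead wished to bound commutators of arbitrary elements of $\gD$, rather than of the generating set $S$, one would need the stronger fact that $\langle U\rangle^{(n)} \to 1$ as well, but the statement as phrased only concerns $S^{(n)}$ via the exercise, so this is not needed.)
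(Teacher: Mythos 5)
Your proof is correct and follows essentially the same route as the paper: both note that $S=\gD\cap U$ satisfies $S^{(n)}\subset U^{(n)}\cap\gD$, use discreteness to find an identity neighborhood meeting $\gD$ only in $1$, conclude $S^{(N)}=\{1\}$ for some $N$, and invoke the exercise. The extra details you supply (the inductive containment $S^{(n)}\subset U^{(n)}$) are exactly what the paper leaves implicit.
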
 
 
\begin{proof}
Since $\gD$ is discrete, there is an identity neighborhood $V$ which intersects $\gC$ trivially.
By Lemma \ref{lem:Z1} $S:=\gD\cap U$ satisfies $S^{(n)}\to 1$, hence for some $N$, $S^{(N)}\subset V\cap\gD=\{1\}$ and the result follows from the previous exercise.
\end{proof} 
 
Furthermore, taking $\gO=U_\gep$ with sufficiently small $\gep$ we can even guarantee that every discrete group with generators in $\gO$ is contained in a {\it connected} nilpotent group:

\begin{thm}[Zassenhaus (1938), Kazhdan--Margulis (1968)]
Let $G$ be a Lie group. There is an open identity neighborhood $\gO\subset G$ such that every discrete subgroup $\gD\le G$ which is generated by $\gD\cap \gO$ is contained in a connected nilpotent Lie subgroup of $N\le G$. Moreover $\gD\le_{UL} N$.
\end{thm}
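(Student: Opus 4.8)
The plan is to build on the previous corollary (which already produces a nilpotent $\langle \gD \cap U \rangle$) by upgrading "nilpotent" to "contained in a connected nilpotent Lie subgroup," and then deducing cocompactness from Proposition \ref{prop:nilpotent}. I would start by fixing, via Lemma \ref{lem:Z1}, an identity neighborhood $U$ with $U^{(n)} \to 1$, and then choosing $\gO$ to be a much smaller symmetric neighborhood — small enough that, in addition to $\gD \cap \gO$ generating a discrete nilpotent group, the logarithm map $\log$ is a diffeomorphism on a neighborhood containing all the iterated commutators that will arise. The key structural input is that for a discrete group $\gD = \langle \gD \cap \gO \rangle$, if $\gO$ is chosen so that $\gD \cap \gO^{(N)} = \{1\}$ for the nilpotency step $N$ (possible since $\gD$ is discrete), then $\gD$ is nilpotent of class $\le N$, and moreover every element of $\gD$ lies very close to the identity in a controlled way.

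The heart of the argument is the following: let $\fg$ be the Lie algebra of $G$, and consider $L := \log(\gD \cap \gO') \subset \fg$ for a suitable neighborhood $\gO' \subset \gO$. I would show that the $\BR$-span of $L$, call it $\fn$, is a nilpotent Lie subalgebra of $\fg$. The nilpotency of $\fn$ follows from the nilpotency of $\gD$ together with the Baker–Campbell–Hausdorff formula: since iterated commutators of elements of $\gD \cap \gO'$ land in arbitrarily small neighborhoods and eventually in $\gD \cap (\text{small set}) = \{1\}$, the corresponding iterated Lie brackets of elements of $L$ vanish beyond step $N$; one then argues that brackets of the $\BR$-linear span also vanish by multilinearity and a density/scaling argument. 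That $\fn$ is closed under bracket (i.e. is a subalgebra at all) is the delicate point and is handled the same way via BCH: the product of two elements of $\gD$ near the identity has logarithm expressible through brackets of their logarithms, and these brackets must themselves lie in (a neighborhood mapping into) the group, forcing $[L, L]$ into $\fn$. Let $N := \exp(\fn)$, the connected nilpotent Lie subgroup with Lie algebra $\fn$; by construction every generator of $\gD$ lies in $N$, so $\gD \le N$.

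Finally, for the "moreover" part: $\gD$ is discrete in $G$, hence discrete in $N$; and $\gD$ is a lattice in $N$ because a finitely generated (indeed, $N$ being nilpotent of bounded step, $\gD$ is finitely generated) discrete subgroup that Zariski-densely — or rather, spanning-densely, since $\log \gD$ spans $\fn$ — sits inside a simply connected nilpotent Lie group is automatically a lattice, by the classical theory of lattices in nilpotent Lie groups (Malcev); alternatively, one invokes that $\gD$ has finite covolume in $N$ because $\fn = \text{Span}_\BR(\log \gD)$, and then applies Proposition \ref{prop:nilpotent} to conclude $\gD \le_{UL} N$. The main obstacle I anticipate is the careful bookkeeping of neighborhood sizes: one must choose $\gO$ small enough that all iterated BCH-commutators of elements that start in $\gO$ remain in the injectivity domain of $\log$ and in the region where the relevant BCH series converges, and simultaneously small enough to force the discreteness-killing $\gD \cap (\cdots) = \{1\}$; making these finitely many constraints compatible, uniformly, is the technical crux, though it is exactly the kind of estimate already illustrated in the exponential-decay computation following Lemma \ref{lem:Z1}.
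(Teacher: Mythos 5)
The paper does not actually prove this theorem: it records only the guiding idea (work with logarithms near the identity) and defers the complete argument to \cite[Theorem 8.16]{Rag} and \cite[Section 4.1]{Th}. Your sketch is in the spirit of that idea, but at the two places where the real work lies it has genuine gaps. First, you deduce that iterated Lie brackets of elements of $L=\log(\gD\cap\gO')$ vanish beyond step $N$ from the fact that iterated group commutators in $\gD$ are eventually trivial. But BCH gives $\log([a,b])=[X,Y]+(\text{higher--order bracket terms})$, so triviality of a group commutator kills the whole series, not its leading term; and since $\gD$ is discrete you cannot replace $X$ by $tX$ and let $t\to 0$ to isolate the leading bracket. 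So nilpotency of your $\mathfrak{n}$ does not follow as stated. Second, the claim that $\mathrm{Span}_\BR(L)$ is closed under bracket is justified by saying the brackets ``must themselves lie in (a neighborhood mapping into) the group'', which is a non sequitur: $[\log a,\log b]$ is in general not the logarithm of any element of $\gD$. The fact that the $\BR$-span of $\log\gC$ of a discrete subgroup of a \emph{simply connected nilpotent} group is a subalgebra is itself a theorem of Malcev theory, proved using that BCH is polynomial there; invoking it before you have placed $\gD$ inside a nilpotent group is circular. The standard proofs avoid this by working with the Lie subalgebra \emph{generated} by the logs, or by an induction on dimension, and the substantive estimates (of the kind following Lemma \ref{lem:Z1}, but sharper) go precisely into controlling that object.

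The ``moreover'' part also does not close. Proposition \ref{prop:nilpotent} converts finite covolume into cocompactness, but you never establish that $\gD$ has finite covolume in $N$, so that alternative is empty; and the Malcev criterion you invoke requires $N$ to be simply connected (the connected subgroup of $G$ with Lie algebra $\mathfrak{n}$ need not be simply connected, nor even closed in $G$) as well as exactly the spanning/subalgebra facts under dispute. The clean route is the theorem that a discrete subgroup of a simply connected nilpotent Lie group is a uniform lattice in the unique smallest connected closed subgroup containing it (its hull); one first secures the containment of $\gD$ in some connected nilpotent subgroup and then \emph{chooses} $N$ to be that minimal hull, rather than deducing cocompactness for a group built as a linear span. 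For the complete bookkeeping see \cite[Theorem 8.16]{Rag} or \cite[Section 4.1]{Th}.
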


The idea is that near the identity the logarithm is well defined and two elements commute iff their logarithms commute. 
For a complete proof see \cite[Theorem 8.16]{Rag} or \cite[Section 4.1]{Th}.

A set $\gO$ as in the theorem above is called a Zassenhaus neighborhood.

\section{Jordan's theorem}
Since connected compact nilpotent groups are abelian, we deduce the following classical result:

\begin{thm}[Jordan 1878]\label{thm:jordan}
For a compact Lie group $K$ there is a constant $m\in \BN$ such that every finite subgroup $\gD\le K$ admits an abelian subgroup of index $\le m$. 
\end{thm}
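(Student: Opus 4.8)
The plan is to deduce Jordan's theorem from the Zassenhaus--Kazhdan--Margulis theorem together with a compactness (volume-counting) argument. Let $\gO \subset K$ be a Zassenhaus neighborhood as in the preceding theorem, which we may shrink to a symmetric open identity neighborhood. First I would fix a symmetric open identity neighborhood $V$ with $V \cdot V \subset \gO$. Since $K$ is compact, it has finite Haar measure, so there is a bound $m := \lfloor \mu(K)/\mu(V) \rfloor + 1$ on the number of elements of any subset of $K$ whose left translates of $V$ are pairwise disjoint; equivalently, any $V$-separated subset of $K$ has at most $m$ elements. I would then show that this $m$ works.

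Given a finite subgroup $\gD \le K$, consider the subgroup $\gD_0 := \langle \gD \cap \gO \rangle$. Since $\gD$ is finite, it is certainly discrete, so by the Zassenhaus--Kazhdan--Margulis theorem $\gD_0$ is contained in a connected nilpotent Lie subgroup $N \le K$. But $N$ is a closed (after passing to the closure, still connected nilpotent) subgroup of the compact group $K$, hence $N$ is itself compact; a connected compact nilpotent Lie group is abelian (its Lie algebra is nilpotent, and a connected compact group with nilpotent Lie algebra has abelian Lie algebra, so the group is a torus). Therefore $\gD_0$ is an abelian subgroup of $\gD$. It remains to bound the index $[\gD : \gD_0]$.

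For the index bound I would argue that distinct cosets of $\gD_0$ are $V$-separated. Suppose $\gc_1 \gD_0$ and $\gc_2 \gD_0$ are cosets with $V\gc_1 \cap V\gc_2 \ne \emptyset$; then $\gc_2 \gc_1^{-1} \in V^{-1} V = V \cdot V \subset \gO$, so $\gc_2 \gc_1^{-1} \in \gD \cap \gO \subset \gD_0$, whence $\gc_1 \gD_0 = \gc_2 \gD_0$. Thus a set of coset representatives for $\gD_0$ in $\gD$ has pairwise disjoint left $V$-translates, so its cardinality $[\gD : \gD_0]$ is at most $m$. This gives the desired abelian subgroup of index $\le m$, with $m$ depending only on $K$ (via $\gO$ and $V$), not on $\gD$.

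The main obstacle is really just the bookkeeping of making the ``$\gD_0$ is abelian'' step airtight: one must be careful that the connected nilpotent group $N$ provided by the theorem can be taken inside $K$ (it is, by the statement) and then invoke the correct structure fact that a connected compact nilpotent Lie group is abelian --- this is where the hypothesis that $K$ is compact is used in an essential way, and it is the only non-formal input beyond the Zassenhaus theorem. The separation/volume argument for the index is routine once the neighborhood $V$ is chosen with $V \cdot V \subset \gO$.
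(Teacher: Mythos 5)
Your proposal is correct and follows essentially the same route as the paper: generate $\gD_0=\langle\gD\cap\gO\rangle$ inside a Zassenhaus neighborhood, use that a connected (after closure) compact nilpotent Lie group is abelian, and bound the index by a Haar-measure pigeonhole with a symmetric $V$ satisfying $V\cdot V\subset\gO$. The only quibble is a harmless left/right bookkeeping slip ($\gc_2\gc_1^{-1}\in\gD_0$ identifies right cosets $\gD_0\gc_1=\gD_0\gc_2$, not left ones), which does not affect the index count.
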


\begin{proof}
Let $\gO$ be a Zassenhaus neighborhood in $K$, let $U$ be a symmetric identity neighborhood satisfying $U^2\subset\gO$, and set $m:=\frac{\mu(K)}{\mu(U)}$. Given a finite subgroup $F\le K$ set $A=\langle F\cap\gO\rangle$. By the remark preceding the theorem $A$ is abelian. Now if $f_1,\ldots,f_{m+1}$ are $m+1$ elements in $F$ then for some $1\le i\ne j\le m+1$ we have $f_iU\cap f_jU\ne\emptyset$ implying that $f_i^{-1}f_j\in F\cap\gO\subset A$. Thus $[F:A]\le m$.
\end{proof}

Note that since any connected Lie group $G$ admits a unique maximal compact subgroup $K$ up to conjugation, one can state Jordan's theorem for non-compact connected Lie groups as well. (Originally, it was stated for $G=\GL_n(\BC)$.) 

\section{Approximations by finite transitive spaces}
Let us make a short detour before continuing the discussion about discrete groups. Suppose that $K$ is a metric group. An $\gep$-quasi morphism $f:F\to K$ from an abstract group $F$ is a map satisfying $d(f(ab),f(a)f(b))\le \gep,~\forall a,b\in F$. We shall say that $K$ is quasi finite if for every $\gep$ there a finite group $F$ and an $\gep$-quasi morphism into $K$ with an $\gep$-dense image (i.e. $\forall k\in K,\exists a\in F$ with $d(f(a),k)\le\gep$). Relying on Jordan's theorem, Turing showed \cite{Turing}:

\begin{thm}[Turing 1938]\label{thm:turing}
A compact connected Lie group is quasi finite iff it is abelian (i.e. a torus).
\end{thm}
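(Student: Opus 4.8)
The plan is to prove both directions of the equivalence. The easy direction is that a torus $K = \mathbb{T}^n$ is quasi finite: for any $\gep$, one simply takes a fine enough finite cyclic subgroup, i.e. the image of $(\Cy{N})^n$ under a scaling map for $N$ large. Since this is an honest subgroup, the quasi-morphism error is zero, and for $N$ large the image is $\gep$-dense. So the content is entirely in the forward direction: if a compact connected Lie group $K$ is quasi finite, then it is abelian. I would prove the contrapositive: a \emph{non-abelian} compact connected Lie group is not quasi finite, i.e.\ there is a threshold $\gep_0 > 0$ such that no finite group admits an $\gep_0$-quasi morphism into $K$ with $\gep_0$-dense image.

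The key idea is to combine Jordan's theorem (Theorem \ref{thm:jordan}) with a stability/rigidity argument showing that an $\gep$-quasi morphism into $K$, for $\gep$ small, is close to an honest homomorphism — or at least that its ``near-kernel'' behaves enough like a subgroup that Jordan's constant applies. Concretely, the steps I would carry out: (1) Fix a Zassenhaus neighborhood $\gO$ for $K$ and, via Jordan's theorem, a constant $m$ so that every finite subgroup of $K$ has an abelian subgroup of index $\le m$. (2) Choose $\gep_0$ so small that the following holds: if $f : F \to K$ is an $\gep_0$-quasi morphism, then the set $A_f := \{a \in F : f(a) \in \gO'\}$, for a suitably shrunk neighborhood $\gO' \subset \gO$, is ``almost closed under products,'' and the subgroup $\langle A_f\rangle$ it generates is mapped by $f$ into a small neighborhood $U$ of the identity on which $K$ looks nilpotent (using that $\gep_0$-products of near-identity elements stay near the identity for boundedly many multiplications). (3) Use the index bound: $A_f$ (or a genuine subgroup trapped between $A_f$ and a slightly larger near-identity set) has index at most roughly $\mu(K)/\mu(\gO')$ in $F$ by the usual pigeonhole-on-cosets argument, exactly as in the proof of Jordan's theorem. (4) Conclude that $f(F)$ is $\gep_0$-dense only if $K$ can be covered by boundedly many $\gep_0$-translates of the image of a ``near-abelian'' piece — and since $K$ is non-abelian and connected, pick $\gep_0$ small enough that this is impossible (the commutator $[f(a),f(b)]$ must be within $O(\gep_0)$ of the identity for $a,b$ ranging over the large near-identity subgroup, forcing the $\gep_0$-dense image to essentially lie in a proper closed subgroup, a contradiction with connectedness and non-commutativity).

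The main obstacle I anticipate is step (2)–(3): a quasi morphism is not a homomorphism, so the set $A_f$ need not be a subgroup, and one cannot directly apply Jordan's theorem to it. The honest technical work is a quantitative stability lemma — in spirit, an Ulam-type stability statement for quasi morphisms near the identity of a Lie group — saying that for $\gep$ small, the "near-identity part" of an $\gep$-quasi morphism generates a set whose image still lies in a Zassenhaus neighborhood, so that the Zassenhaus--Kazhdan--Margulis trapping (into a connected nilpotent, hence near-identity abelian, subgroup) goes through up to controlled error. Once that is in place, the pigeonhole index bound and the contradiction with non-commutativity are routine, mirroring the proof of Theorem \ref{thm:jordan} almost verbatim. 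I would structure the writeup as: a stability lemma (the crux), then a short deduction reusing the Jordan argument, then the trivial torus direction.
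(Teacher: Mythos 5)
Your overall strategy -- reduce to the contrapositive, use Jordan's theorem (Theorem \ref{thm:jordan}) plus a rigidity statement for $\gep$-quasi morphisms, then a pigeonhole/covering argument -- is indeed the route Turing's theorem is proved by (the paper itself gives no proof, only the citation to \cite{Turing} and the remark that it relies on Jordan's theorem, so there is nothing in-text to compare against beyond that hint). The trivial direction (a torus is quasi finite via the subgroups $(\Cy{N})^n$) is fine. But as a proof the proposal has a genuine gap, and you have identified it yourself without closing it: everything hinges on the stability lemma in step (2), and that lemma is precisely the nontrivial content of the hard direction. Without it, steps (2)--(3) fail as stated. An $\gep$-quasi morphism only controls a \emph{single} multiplication; when you pass from the set $A_f=f^{-1}(\gO')$ to the subgroup $\langle A_f\rangle$ you must multiply elements of $F$ with no a priori bound on the word length, and the errors accumulate linearly in that length, so there is no reason the image of $\langle A_f\rangle$ stays in a Zassenhaus neighborhood (or in any proper neighborhood at all). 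Consequently neither the Zassenhaus--Jordan trapping nor the index-$\le m$ pigeonhole can be applied to $A_f$ as written. What is needed is an actual Ulam/Grove--Karcher--Ruh-type statement: for $K$ compact Lie there is $\gep_1>0$ and a function $\delta(\gep)\to 0$ such that every $\gep$-quasi morphism $f:F\to K$ with $\gep<\gep_1$ lies within $\delta(\gep)$ of a genuine homomorphism $F\to K$. That (or a quantitative substitute) is the crux, it is exactly what Turing proves in \cite{Turing}, and the proposal does not prove it.

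Two further remarks on the deduction, assuming the stability lemma. First, once $f$ is replaced by a nearby homomorphism $h$, apply Jordan to the finite subgroup $h(F)\le K$: it has an abelian subgroup $A$ of index $\le m$ with $m=m(K)$ only, and $\overline{A}$ lies in a maximal torus $T$. Second, your step (4) should be fixed: the conclusion is not that the image ``essentially lies in a proper closed subgroup'' but that $K$ is covered by $m$ translates of the $(\gep_0+\delta)$-neighborhood of a conjugate of $T$; if $K$ is non-abelian then $T$ has positive codimension, so the Haar measure of such a neighborhood is $O\bigl((\gep_0+\delta)\bigr)$, and since $m$ depends only on $K$ this covering is impossible once $\gep_0$ is small -- a volume estimate, not an appeal to connectedness alone. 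With the stability lemma supplied and step (4) rephrased this way, the argument closes; without it, the proposal is a plan rather than a proof.
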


Recall that a metric space is said to be transitive if its isometry group acts transitively.
With the aid of Turing's theorem one can classify the metric spaces which can be approximated by finite transitive ones:

\begin{thm}[\cite{Ge3}]\label{thm:metric-turing}
A metric space is a limit of finite transitive spaces (in the Gromov--Hausdorff topology) iff it admits a transitive compact group of isometries whose identity connected component is abelian. 
\end{thm}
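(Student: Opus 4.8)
The plan is to prove both implications of Theorem~\ref{thm:metric-turing}, using Turing's theorem (Theorem~\ref{thm:turing}) as the engine on the group side.

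For the easy direction, suppose $X$ admits a transitive compact group of isometries $K$ whose identity component $K^\circ$ is abelian, hence a torus. Fix a basepoint $x_0 \in X$ and let $L = \operatorname{Stab}_K(x_0)$, so that $X \cong K/L$ as metric spaces (with the quotient metric) via $kL \mapsto kx_0$. First I would handle the connected torus $T = K^\circ$: by Turing's theorem $T$ is quasi finite, so for every $\gep$ there is a finite group $F$ and an $\gep$-quasi morphism $\gph: F \to T$ with $\gep$-dense image. Then I would push this up to $K$ using that $K/K^\circ$ is finite: choose coset representatives $k_1, \dots, k_r$ for $K^\circ$ in $K$, and build a finite group $\widehat F$ (an extension of $K/K^\circ$ by $F$, or more simply $F \times (K/K^\circ)$ after passing to a further finite quotient so that the extension splits appropriately) together with an $O(\gep)$-quasi morphism $\widehat\gph: \widehat F \to K$ whose image is $O(\gep)$-dense in $K$. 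Composing with the orbit map $k \mapsto k x_0$ gives an $O(\gep)$-dense ``quasi-orbit'' of $\widehat F$ in $X$; the subtlety is to manufacture from the quasi-orbit an honest finite \emph{metric} space with a transitive isometry group that is Gromov--Hausdorff close to $X$. I would do this by taking the finite set $S = \{ \widehat\gph(a) x_0 : a \in \widehat F \}$ (which is $O(\gep)$-dense in $X$), equipped \emph{not} with the restricted metric but with the metric $d_S(a x_0, b x_0) := \frac{1}{|\widehat F|}\sum_{c \in \widehat F} d_X(\widehat\gph(ca) x_0, \widehat\gph(cb) x_0)$ obtained by averaging over the $\widehat F$-action; by construction $\widehat F$ acts on $(S, d_S)$ by isometries, transitively (as the quasi-orbit is a single $\widehat F$-``orbit''), and the quasi-morphism error estimate shows $d_S$ differs from $d_X|_S$ by $O(\gep)$, so $(S,d_S)$ is a finite transitive space within Gromov--Hausdorff distance $O(\gep)$ of $X$. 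Letting $\gep \to 0$ finishes this direction.

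For the hard direction, suppose $X = \lim_n X_n$ in the Gromov--Hausdorff topology with each $X_n$ finite and transitive, say $X_n = \Isom(X_n)/\Isom(X_n)_{x_n}$ for a finite group $\Isom(X_n)$. The classical fact (which I would invoke or sketch) is that a Gromov--Hausdorff limit of homogeneous spaces is homogeneous, and that a \emph{compact} homogeneous metric space $X$ has compact isometry group $K = \Isom(X)$ acting transitively, with $X \cong K/L$. So the content is to show that $K^\circ$ is abelian. Here I would argue by contradiction: if $K^\circ$ is a compact connected non-abelian Lie group, it contains a compact connected \emph{simple} Lie subgroup (or more carefully, $K^\circ$ surjects onto a nontrivial compact semisimple group, or contains a copy of $\SO(3)$ or $\SU(2)$), and by Turing's theorem that subgroup is \emph{not} quasi finite. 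I then need to transfer this non-quasi-finiteness obstruction from $K$ to $X$: the near-isometries $X_n \to X$ together with the transitive $\Isom(X_n)$-actions should, via an Arzel\`a--Ascoli / ultralimit argument on the actions, produce $\gep_n$-quasi morphisms from the finite groups $\Isom(X_n)$ into $K$ (or into $K^\circ$ after a bounded-index reduction) with $\gep_n$-dense images and $\gep_n \to 0$ --- contradicting Turing's theorem applied to the non-abelian factor of $K^\circ$.

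\emph{The main obstacle} is precisely this last transfer step: extracting genuine quasi morphisms $\Isom(X_n) \to K$ from the data of $\gep_n$-isometries $X_n \to X$ intertwining the actions only up to $\gep_n$. The isometry groups $\Isom(X_n)$ need not embed in $K$, and one must control both the multiplicative defect and the density of the image simultaneously, then realize the obstruction inside a non-abelian \emph{compact connected} subgroup of $K^\circ$ (where Jordan's theorem, via Theorem~\ref{thm:turing}, applies). A clean way to organize this is to fix an ultrafilter and pass to an ultralimit: the ultralimit of the pointed actions $(X_n, \Isom(X_n), x_n)$ yields an action on $X$ by a group $G_\infty$ acting transitively and by isometries, with $G_\infty$ the ultralimit of the finite groups; one shows $G_\infty$ maps onto a dense subgroup of $K$, and then Jordan-type control on the finite groups $\Isom(X_n)$ (each is finite, so by Jordan's theorem has an abelian subgroup of controlled index once it is realized near a compact Lie group) forces $K^\circ$ to be abelian. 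I would present the ultralimit formulation as the main line of argument and relegate the $\gep$--$\gd$ bookkeeping to remarks, citing \cite{Ge3} for the full details.
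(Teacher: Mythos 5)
Your hard direction is essentially the paper's own route: everything is compressed into producing, for each finite transitive $X_n$ that is Gromov--Hausdorff close to $X$, a quasi morphism $\Isom(X_n)\to\Isom(X)$ with small defect and almost dense image, and then invoking structure theory of compact groups together with Theorem \ref{thm:turing}; like the paper, you do not actually construct these quasi morphisms but defer the construction (your ``main obstacle'') to \cite{Ge3}, so at the level of detail the paper itself gives, the two sketches coincide.

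The genuine gap is in your ``easy'' direction, where you tacitly assume the transitive compact group $K$ is a Lie group: you assert that $K^\circ$ abelian is ``hence a torus'' and that $K/K^\circ$ is finite. Neither is true in general, and the non-Lie cases are precisely ones the theorem must cover --- the paper itself notes that connected components of approximable spaces are inverse limits of tori (e.g.\ solenoids), and a totally disconnected example such as the profinite completion of $\BZ$ with an invariant metric is a limit of the finite transitive spaces $\BZ/n!\BZ$ while its transitive compact isometry group has trivial identity component but an infinite (profinite) component group. For such $X$ your choice of finitely many coset representatives of $K^\circ$, the finite extension $\widehat F$, and the appeal to Theorem \ref{thm:turing} for $K^\circ$ (which is stated only for compact connected \emph{Lie} groups) all break down. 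The repair is standard but must be said: write $K$ as an inverse limit of compact Lie group quotients $K/N$, observe each has abelian identity component, and approximate $X\cong K/L$ through the homogeneous spaces of these Lie quotients, where your torus-plus-finite-component argument (or a direct approximation by finite subgroups meeting all components) applies. The same oversight infects the reduction step of your hard direction: the ``bounded-index reduction'' to $K^\circ$ and the Jordan-type control again presuppose finitely many components, whereas the intended reduction (the paper's ``structure theorems for compact groups'') passes from a non-abelian $K^\circ$ to a surjection onto a compact connected simple Lie group and uses that quasi-finiteness descends to such quotients, not an index bound.
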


The lines of the proof are as follows. Given a metric space $X$, one shows that there is a $\gd_0>0$ and a function $\gep:(0,\gd_0)\to \BR^{>0}$ whose limit at $0$ is $0$, such that for any finite metric space $\mathcal{F}$ with $d_{GH}(X,\mathcal{F})<\gd\le\gd_0$ there is a natural $\gep(\gd)$ quasi morphism from the finite group $\text{Isom}(\mathcal{F})$ to  $\text{Isom}(X)$. The result is then proved relying on structure theorems for compact groups and on \ref{thm:turing} (see \cite{Ge3} for details).

It follows from Theorem \ref{thm:turing} and the Peter--Weyl theorem that if $X$ is approximable by finite transitive spaces then its connected components are inverse limits of tori, hence the only manifolds that can be approximated are tori. In particular we obtain the following result which answers a question of I. Benjamini and can be interpreted as the non-existence of a perfect soccer ball: 

\begin{cor}
$S^2$ cannot be approximated by finite homogeneous spaces.
\end{cor}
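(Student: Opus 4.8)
The plan is to derive the corollary as a direct consequence of Theorem~\ref{thm:metric-turing} together with the structural remark preceding it. Suppose, for contradiction, that $S^2$ is a limit of finite transitive metric spaces in the Gromov--Hausdorff topology. By Theorem~\ref{thm:metric-turing}, $S^2$ then admits a transitive compact group $K$ of isometries whose identity connected component $K^\circ$ is abelian. The key point is to understand how restrictive this is for a $2$-sphere.

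First I would observe that since $K$ acts transitively on the compact connected manifold $S^2$, the orbit map $K\to S^2$, $k\mapsto k\cdot x_0$, is surjective, so $S^2 \cong K/K_{x_0}$ where $K_{x_0}$ is the stabilizer of a point. In particular $\dim K \ge \dim S^2 = 2$, and $K$ has only finitely many connected components, so $K^\circ$ is a compact connected abelian Lie group of dimension $\ge 2$ --- that is, a torus $\BT^d$ with $d\ge 2$ --- acting on $S^2$ with finitely many orbit types (the orbits of $K^\circ$ being unions of which give the orbits of $K$). The next step is to rule this out: a torus of rank $\ge 2$ cannot act on $S^2$ with an open orbit, hence $K^\circ$ cannot act transitively, and since $K/K^\circ$ is finite, $K$ itself has orbits of dimension $\le \dim K^\circ$ but the $K^\circ$-orbits have dimension $\le 1$ forcing the $K$-orbits to be finite unions of these --- still of dimension $\le 1$ --- contradicting transitivity of $K$ on the $2$-dimensional $S^2$. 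More cleanly: a transitive action of $K$ forces $\dim K^\circ = \dim K \ge 2$ and forces some $K^\circ$-orbit to be $2$-dimensional (since $K/K^\circ$ finite means $K^\circ$-orbits and $K$-orbits have the same dimension), i.e. $K^\circ$ acts with an open orbit $\cong \BT^d/\text{(finite)}$; but an open subset of $S^2$ homeomorphic to a quotient of a $d$-torus, $d\ge 2$, is impossible since such a quotient contains an embedded $2$-torus (or contains no point with a simply connected neighborhood of the right type) --- concretely, $\BT^2$ does not embed as an open subset of $S^2$, as $\BT^2$ is not planar. Alternatively, and most simply, one invokes the classical fact that the only compact connected abelian Lie group acting transitively on $S^2$ would have to be $S^2$ itself as a group, but $S^2$ admits no Lie group structure (its Euler characteristic is $2\ne 0$).

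The cleanest route I would actually write: by the paragraph after Theorem~\ref{thm:metric-turing}, if $X$ is approximable by finite transitive spaces then each connected component of $X$ is an inverse limit of tori; since $S^2$ is connected, $S^2$ would be an inverse limit of tori. But $S^2$ is a manifold, and the only manifolds arising as inverse limits of tori are tori themselves (a genuine manifold that is an inverse limit of tori must be a torus, as its dimension is finite and it receives a surjection-like structure from a torus); since $S^2 \not\cong \BT^n$ for any $n$ (e.g. $\pi_1(S^2)=1$ while $\pi_1(\BT^n)=\BZ^n$, or $\chi(S^2)=2\ne 0=\chi(\BT^n)$), this is a contradiction. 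Hence $S^2$ cannot be approximated by finite transitive (equivalently, finite homogeneous) spaces.

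The main obstacle is making precise the step ``a compact connected abelian group acting transitively on $S^2$ is impossible'' in a way that genuinely uses only what has been stated: the honest tool is that the action of the compact group $K$ on $S^2$ is transitive, so $S^2$ is a homogeneous space $K/K_{x_0}$, and one must extract that $K^\circ$, being an abelian normal subgroup of finite index, also acts with full-dimensional orbits, reducing to a torus acting transitively on $S^2$ up to a finite quotient --- then the topological obstruction ($S^2$ is simply connected and closed of nonzero Euler characteristic, while any homogeneous space of a torus, or finite quotient thereof, that is $2$-dimensional is a torus or Klein-bottle-like quotient, never $S^2$) finishes it. I expect this topological bookkeeping, rather than any analysis, to be the only real content of the proof.
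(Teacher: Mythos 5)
Your final route is exactly the paper's argument: the remark following Theorem~\ref{thm:metric-turing} (obtained from Theorem~\ref{thm:turing} and the Peter--Weyl theorem) already states that the only manifolds approximable by finite transitive spaces are tori, and $S^2$ is not a torus (e.g.\ $\chi(S^2)=2\neq 0$, or $\pi_1(S^2)=1$), so your proposal is correct and essentially identical to the paper's proof. Your longer first argument --- that a compact transitive isometry group $K$ with abelian identity component would force some $K^\circ$-orbit to be a $2$-torus that is open and closed in $S^2$ --- is also sound, provided you make explicit the fact that a compact group acting effectively and transitively on a topological manifold is a Lie group, which you use implicitly when asserting $\dim K\ge 2$ and the finiteness of the component group of $K$.
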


Theorem \ref{thm:metric-turing} has also some graph theoretic applications. For instance one can deduce that any sequence of distance-transitive graphs with normalized diameter and bounded geometry converges, in the Gromov--Hausdorff sense, to a circle (see \cite[Corollary 1.6]{Ge3}). 

\section{Margulis' lemma}
Coming back from this short detour, let us present another classical result:

\begin{thm}(The Margulis lemma, \cite[Section 4.1]{Th})\label{MarLem}
Let $G$ be a Lie group acting properly by isometries on a Riemannian manifold $X$. Given $x\in X$ there are $\gep=\gep(x)>0$ and $m=m(x)\in\BN$ such that if $\gD\le G$ is a discrete subgroup which is generated by the set 
$$
 \gS_{\gD,x,\gep}:=\{\gc\in\gD:d(\gc\cdot x,x)\le\gep\}
$$
then $\gD$ admits a subgroup of index $\le m$ which is contained in a (closed) connected nilpotent Lie group. Furthermore, if $G$ acts transitively on $X$ then $\gep$ and $m$ are independent of $x$. 
\end{thm}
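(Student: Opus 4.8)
The plan is to reduce the statement to the Zassenhaus--Kazhdan--Margulis theorem by transferring the geometric ``small displacement'' condition into a ``small in $G$'' condition. First I would fix a point $x \in X$ and use the hypothesis that $G$ acts \emph{properly} by isometries: this means that for every compact $L \subset X$ the set $\{g \in G : g\cdot x \in L\}$ is relatively compact in $G$. Consequently, for any $r > 0$ the set $B_{x,r} := \{g \in G : d(g\cdot x, x) \le r\}$ is a compact (hence bounded) neighborhood-like subset of $G$ — more precisely, it is symmetric, contains $1$, and is relatively compact. Let $\gO \subset G$ be a Zassenhaus neighborhood supplied by the Zassenhaus--Kazhdan--Margulis theorem.

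The heart of the argument is a covering/pigeonhole step. Since $B_{x,1}$ (say) is relatively compact and $\gO$ is open, finitely many left translates $g_1\gO, \ldots, g_N\gO$ cover $B_{x,1}$; moreover I can choose $\gep = \gep(x) \le 1$ small enough that $B_{x,\gep} \cdot B_{x,\gep}^{-1} \subset \gO'$ where $\gO'$ is a smaller symmetric identity neighborhood with $(\gO')^2 \subset \gO$ — this is possible because $g \mapsto d(g\cdot x, x)$ is continuous and vanishes at $1$, so $B_{x,\gep}$ shrinks to $\{1\}$ (in the sense of eventually entering any neighborhood) as $\gep \to 0$, at least after intersecting with the relatively compact $B_{x,1}$. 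Now suppose $\gD \le G$ is discrete and generated by $\gS_{\gD,x,\gep} = \gD \cap B_{x,\gep}$. Cover $B_{x,\gep} \subset B_{x,1}$ by the $N$ translates $g_i\gO'$; if two generators $\gc, \gc'$ land in the same translate $g_i\gO'$, then $\gc^{-1}\gc' \in (\gO')^2 \subset \gO$, and $\gc^{-1}\gc' \in \gD$. This shows the subgroup $\gD_0 := \langle \gD \cap \gO \rangle$ has index at most $N$ in $\gD$ — indeed any $\gD \cap B_{x,\gep}$-word can be normalized coset-by-coset against the finitely many ``representatives'' — so set $m = m(x) := N$. Since $\gD_0$ is a discrete subgroup of $G$ generated by $\gD_0 \cap \gO$, the Zassenhaus--Kazhdan--Margulis theorem gives a connected nilpotent Lie subgroup $N \le G$ with $\gD_0 \le_{UL} N$, in particular $\gD_0$ is contained in a closed connected nilpotent Lie group, which is exactly the desired conclusion.

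For the ``furthermore'' clause, when $G$ acts transitively on $X$, I would use that any two points $x, y \in X$ are related by some $g \in G$ with $g\cdot x = y$, and conjugation by $g$ carries $\gS_{\gD,x,\gep}$ to $\gS_{g\gD g^{-1},y,\gep}$ (isometry of the action: $d(g\gc g^{-1}\cdot y, y) = d(\gc\cdot x, x)$); since a Zassenhaus neighborhood can be chosen conjugation-invariantly up to shrinking (or one simply notes the conclusion is conjugation-invariant), the constants $\gep$ and $m$ obtained at $x$ work at every $y$. The main obstacle I anticipate is purely the bookkeeping in the pigeonhole step: one must be careful that the index bound applies to the group $\gD$ generated by the \emph{small} elements and not merely to a sub-word-set, which requires the standard Schreier-type argument that $\langle \gD \cap \gO \rangle$ has finite index once $\gD \cap B_{x,\gep}$ meets only finitely many of its cosets. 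The appeal to properness to guarantee $B_{x,1}$ is relatively compact in $G$ is the one genuinely new input beyond the previously established Zassenhaus--Kazhdan--Margulis theorem.
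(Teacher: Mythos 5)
Your overall strategy is the paper's: use properness to turn small displacement at $x$ into a compactness statement in $G$, then feed the Zassenhaus--Kazhdan--Margulis theorem and a pigeonhole argument. But two things go wrong. First, a preliminary error: $B_{x,\gep}=\{g:d(g\cdot x,x)\le\gep\}$ does \emph{not} shrink to $\{1\}$ as $\gep\to 0$; it shrinks to the stabilizer of $x$, which is a possibly large compact subgroup (e.g.\ $G=\Isom(\BH^n)$ acting on $\BH^n$, stabilizer $\cong O(n)$), so you cannot arrange $B_{x,\gep}\cdot B_{x,\gep}^{-1}\subset\gO'$. This particular claim is not needed for your covering step, but it reflects the real difficulty: small displacement at $x$ does not mean close to $1$ in $G$, and products of small-displacement elements can be far from $1$.

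Second, and this is the genuine gap: covering $B_{x,\gep}$ by $N$ translates $g_i\gO'$ only shows that the generating set $\gS_{\gD,x,\gep}$ meets at most $N$ cosets of $\gD_0=\langle\gD\cap\gO\rangle$, and that does \emph{not} bound $[\gD:\gD_0]$ (compare $\gD=\BZ$ generated by $\{\pm1\}$ and $\gD_0$ trivial: two cosets met, infinite index). Your proposed ``normalize coset-by-coset'' fix fails because the product of a coset representative with a generator leaves $B_{x,1}$, where your covering gives no information. The missing idea, which is exactly what the paper's proof supplies, is to couple $\gep$ to the index bound $m$: with $C=\{g:d(g\cdot x,x)\le 1\}$ compact and $V$ a symmetric relatively compact set with $V^2$ a Zassenhaus neighborhood, set $m=[\vol(CV)/\vol(V)]$ and $\gep=1/m$; then subadditivity of displacement gives $(\gS_{\gD,x,\gep})^m\subset C$, so if $F=\langle\gD\cap V^2\rangle$ had index $>m$ the connected Schreier graph of $\gD/F$ would contain $m+1$ distinct cosets with representatives $\gc_1,\dots,\gc_{m+1}\in(\gS_{\gD,x,\gep})^m\subset C$; since the sets $\gc_iV$ lie in $CV$ they cannot be pairwise disjoint by volume, forcing $\gc_i^{-1}\gc_j\in V^2\cap\gD\subset F$, a contradiction. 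In other words, the Jordan-theorem pigeonhole must be run on the $m$-ball in the word metric, not on the generators alone; without the choice $\gep\sim 1/m$ this step has no compact set to live in. Your handling of the transitive case (conjugation-invariance of the construction) is fine once the pointwise statement is correctly proved.
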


\begin{proof}
The properness of the action implies that the set 
$$
 C=\{ g\in G: d(g\cdot x,x)\le 1\}
$$ 
is compact. Let $V\subset G$ be a relatively compact open symmetric set such that $V^2$ is a Zassenhaus neighborhood. Setting 
$$
 m=[\frac{\vol(C\cdot V)}{\vol(V)}]~\text{and}~\gep=1/m
$$ 
one can prove the theorem arguing as in the proof of \ref{thm:jordan}. An extra complication arises from the fact that $\gD$ and $F=\langle\gD\cap V^2\rangle$ are infinite, but this can be taken care of by observing that whether a connected graph has more than $m$ vertices or not, can be seen by looking at a ball of radius $m$ in the graph. Thus, assuming in contrary that the Schreier graph of $\gD/F$ has more than $m$ vertices, we could find $m+1$ elements $\gc_1,\ldots,\gc_{m+1}$ in the $m$-ball $(\gS_{\gD,x,\gep})^m$ which belong to mutually different cosets of $F$. However, by the choice of $\gep$ we have that $(\gS_{\gD,x,\gep})^m\subset C$, hence for some $1\le i\ne j\le m+1$ we have $\gc_iV\cap\gc_jV\ne\emptyset$, i.e. $\gc_i^{-1}\gc_j\in V^2\cap\gD\subset F$, a contradiction.
\end{proof}

A differential--geometric proof of the Margulis lemma, which provides more information, can be found in \cite{BGS}. 

\section{Crystallographic manifolds}
In the special case of $X=\BR^n$ since homotheties commute with isometries it follows that $\gep=\infty$ --- i.e. that any finitely generated\footnote{Since any discrete subgroup of $\text{Isom}(\BR^n)$ is f.g. this assumption is in fact redundant.} discrete group of isometries of $\BR^n$ is virtually (i.e. admits a finite index subgroup which is) contained in a connected nilpotent group. Indeed, given any $\gep>0$ and a finite set $\gS$ generating a discrete subgroup of $\text{Isom}(\BR^n)$, one can rescale the metric (or alternatively, apply a homothety) so that the displacement of $\gS$ at an arbitrary point $x\in\BR^n$ becomes less than $\gep$.

Moreover, it is easy to verify that the connected nilpotent subgroups of the group $G=\text{Isom}(\BR^n)\cong O_n(\BR)\ltimes \BR^n$ are abelian. Given an isometry $\gc$ of $\BR^n$ one can decompose $\BR^n$, considered as an affine space, to $\min(\gc)\oplus\min(\gc)^\perp$ where $\min(\gc)$ is the affine subspace on which $\gc$ acts by a translation and $\min(\gc)^\perp$ is an arbitrarily located orthogonal complement.
Clearly for $\gc$ non-elliptic (i.e. which does not fix a point) $\min(\gc)$ has positive dimension, and it is not hard to show that if $\Lambda$ is a set of commuting non-elliptic isometries then $\cap_{\gc\in\Lambda}\min(\gc)$ has positive dimension and is $\langle\Lambda\rangle$-invariant.  

\begin{exercise} Complete the details above as follows: 

\begin{enumerate}
\item Show that every connected nilpotent subgroup of $O_n(\BR)\ltimes \BR^n$ is abelian. (Hint: use the fact that a compact connected nilpotent group is abelian.)

\item Show that an isometry of $\BR^n$ whose linear part has no nonzero invariant vector must have a fixed point. Deduce the existence of the above decomposition $\min(\gc)\oplus\min(\gc)^\perp$ by first decomposing $\BR^n$ as a linear space according to the liner part of $\gc$.

\item Show that the min-sets of arbitrarily many commuting isometries intersect nontrivially.

\end{enumerate}

\end{exercise}

Thus, we deduce:

\begin{thm}[Bieberbach (1911) --- Hilbert's 18'th problem]\label{thm:bie}
Let $\gC$ be a torsion free group acting properly discontinuously by isometries on $\BR^n$. Then $\gC$ admits a finite index subgroup isomorphic to $\BZ^k$ ($k\le n$) which acts by translations on some $k$ dimensional invariant subspace, and $k=n$ iff $\gC$ is uniform. In particular, every crystallographic manifold is finitely covered by a torus. 
\end{thm}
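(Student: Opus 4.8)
The plan is to deduce Bieberbach's theorem from the Margulis lemma (Theorem~\ref{MarLem}) applied to $X=\BR^n$, $G=\Isom(\BR^n)\cong O_n(\BR)\ltimes\BR^n$, together with the structural observations collected in the preceding exercise. First I would invoke the remark made just before the statement: because homotheties of $\BR^n$ commute with isometries, given any finite generating set $\gS$ of the discrete group $\gC$ one can rescale the metric so that the displacement $\max_{\gc\in\gS}d(\gc\cdot x,x)$ at a chosen basepoint $x$ drops below the Margulis constant $\gep(x)$. (One should note that a discrete subgroup of $\Isom(\BR^n)$ is automatically finitely generated, so no extra hypothesis is needed; alternatively apply the transitive case of the Margulis lemma, where $\gep$ is uniform.) Hence $\gC=\langle\gS_{\gC,x,\gep}\rangle$ and the Margulis lemma yields a subgroup $\gC_0\le\gC$ of index $\le m$ contained in a closed connected nilpotent Lie subgroup $N\le G$.

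Next I would use part~(1) of the exercise: a connected nilpotent subgroup of $O_n(\BR)\ltimes\BR^n$ is abelian, since its closure is a connected nilpotent Lie group whose maximal compact subgroup (a compact connected nilpotent group, hence a torus) is central, forcing commutativity. So $\gC_0$ is a finitely generated torsion-free abelian group, i.e. $\gC_0\cong\BZ^k$ for some $k$. It remains to identify the geometry of the $\gC_0$-action. Here I would bring in parts~(2) and~(3): every element $\gc\in\gC_0$ is non-elliptic (as $\gC$ is torsion free, no nontrivial element fixes a point), so by part~(2) each $\gc$ has a well-defined min-set $\min(\gc)$, a nonempty affine subspace on which $\gc$ acts by a nonzero translation, with orthogonal linear complement on which the linear part of $\gc$ has no nonzero fixed vector. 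By part~(3) the intersection $V:=\bigcap_{\gc\in\gC_0}\min(\gc)$ is a nonempty affine subspace, and by construction it is $\gC_0$-invariant and $\gC_0$ acts on it purely by translations; since $\gC_0\cong\BZ^k$ acts freely and properly discontinuously by translations, it acts as a lattice of rank exactly $\dim V$, so $\dim V=k$ and we may take $V$ itself as the invariant $k$-dimensional subspace. This also shows $k\le n$.

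For the uniformity statement: if $k=n$ then $\gC_0\cong\BZ^n$ is cocompact in $\BR^n$, hence cocompact in $\Isom(\BR^n)$ modulo the compact $O_n(\BR)$, so $\gC\supseteq\gC_0$ is uniform; conversely if $\gC$ is uniform then so is its finite-index subgroup $\gC_0$, and a rank-$k$ translation group on an affine $k$-plane in $\BR^n$ is cocompact in $\Isom(\BR^n)$ only when $k=n$ (otherwise the quotient retains a noncompact Euclidean factor of dimension $n-k>0$, together with the compact $O_n$-direction). Finally, ``every crystallographic manifold is finitely covered by a torus'' is just the special case where $\gC$ is a cocompact torsion-free group of isometries: then $\gC_0\cong\BZ^n$ acts by translations cocompactly, so $\gC_0\backslash\BR^n$ is an $n$-torus and it finitely covers $\gC\backslash\BR^n$.

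The main obstacle is not any single step but making sure the pieces fit cleanly: specifically, verifying that the Margulis-lemma subgroup can genuinely be taken abelian (i.e. that the exercise's claim about connected nilpotent subgroups of $O_n\ltimes\BR^n$ applies to the \emph{closure} of $N$ and that closure is still connected nilpotent), and — the genuinely non-formal point — establishing that the common min-set $V$ is nonempty and positive-dimensional of the right dimension, which is exactly part~(3) of the exercise and rests on the fact that commuting isometries preserve each other's min-sets so that one can intersect them inductively without the dimension collapsing to a point. Everything else is bookkeeping with semidirect-product structure and elementary properties of lattices in $\BR^n$.
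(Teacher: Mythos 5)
Your argument follows the paper's own route essentially verbatim: rescale the metric to make the Margulis constant effectively infinite, pass to a finite-index subgroup inside a connected nilpotent (hence, by the exercise, abelian) subgroup of $O_n(\BR)\ltimes\BR^n$, and then intersect the min-sets of the commuting non-elliptic elements to obtain an invariant affine subspace on which $\gC_0\cong\BZ^k$ acts by translations. The only quibble is your claim that $\dim V=k$ for $V=\bigcap_{\gc\in\gC_0}\min(\gc)$: proper discontinuity does not force a rank-$k$ translation group to be cocompact in $V$, so $V$ may have dimension larger than $k$; simply replace $V$ by the affine span of a $\gC_0$-orbit in $V$, which is the desired $k$-dimensional invariant subspace.
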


A well known result, commonly attributed to Selberg states that every finitely generated linear group is virtually torsion free (c.f. \cite[Corollary 6.13]{Rag}). Thus Theorem \ref{thm:bie} holds without the assumption that the discrete group $\gC\le\text{Isom}(\BR^n)$ is torsion free.


\lecture{On the geometry of locally symmetric spaces and some finiteness theorems}
\section{Hyperbolic spaces}
Consider the hyperbolic space $\BH^n$ and its group of isometries $G=\text{Isom}(\BH^n)$. Recall that $G^\circ\cong\text{PSO}(n,1)$ is a rank one simple Lie group. For $g\in G$ denote by 
$$
 d_g(x):=d(g\cdot x,x)
$$ 
the displacement function of $g$ at $x\in \BH^n$.
Let 
$$
 |g|=\inf d_g~\text{and}~\min(g)=\{ x:\in \BH^n: d_g(x)=|g|\}.
$$ 
Note that $d_g$ is a convex function which is smooth outside $\min(g)$.

The isometries of $\BH^n$ split to 3 types (c.f. \cite[Section 2.5]{Th}):
\begin{itemize}
\item {\it elliptic} --- those that admit fixed points in $\BH^n$.
\item {\it hyperbolic} --- isometries for which $d_g$ attains a positive minimum. In that case $\min (g)$ is a $g$-invariant geodesic, called the axis of $g$.
\item {\it parabolic} --- isometries for which $\inf d_g=0$ but have no fixed points in $\BH^n$.
\end{itemize}

The first two types are called {\it semisimple}. 

One way to prove that every isometry is of one of these forms is to consider the visual compactification $\overline\BH^n=\BH^n\cup\partial\BH^n$, where $\partial\BH^n$ can be defined as the set of geodesic rays up to bounded distance (the student is refereed to P.E. Caprace's course --- given in parallel --- for a detailed description of this compactification). The action of $G$ on $\BH$ extends to a continuous action on $\overline\BH^n$ and $\overline\BH^n$ is homeomorphic to a closed ball in $\BR^n$. By Brouwer's fixed point theorem every $g\in G$ admits a fixed point in $\overline\BH^n$. 

\begin{exercise}
Suppose $n\ge 2$ and let $g\in G$.
\begin{itemize}
\item If $g$ has $3$ fixed points on $\partial\BH^n$ then $g$ fixes point wise the hyperbolic plane in $\BH^n$ determined by these $3$ points, and in particular $g$ is elliptic.
\item If $g$ is non-elliptic and has exactly $2$ fixed points at $\partial\BH^n$ then $g$ is hyperbolic and its axis is the geodesic connecting these fixed points.
\item If $g$ has exactly one fixed point at $\partial\BH^n$ then $g$ is parabolic.
\end{itemize}
\end{exercise}

By considering the upper half space model for $\BH^n$ it is easy to see that a parabolic isometry preserves the horospheres around its fixed point at infinity, and that each such horosphere, considered with its intrinsic metric, is isometric to $\BR^{n-1}$.

\begin{exercise}\label{Ex:commute}
Suppose that $g,h\in G$ commute, then
\begin{itemize}
\item if $g$ is hyperbolic, then $h$ is semisimple,
\item if $g$ and $h$ are both hyperbolic then they share a common axis,
\item if $g$ and $h$ are parabolics, they have the same fixed point at $\partial \BH^n$.
\end{itemize}
\end{exercise}

\begin{exercise}
A discrete subgroup $\gD\le G$ admits a common fixed point in $\BH^n$ if and only if it is finite. 
\end{exercise}

It follows that a discrete group $\gC\le G$ acts freely on $\BH^n$ if and only if it is torsion free.

\section{The thick--thin decomposition}
Let $\gC\le G$ be a torsion free discrete subgroup. We denote by $M=\gC\backslash \BH^n$ the associated complete hyperbolic manifold. Note that $\gC$ is a lattice iff $M$ has finite volume. We denote by $\text{InjRad}(x)$ the injectivity radius at $x$.
Let $\gep(\BH^n)$ be the Margulis' constant of $\BH^n$ (see Theorem \ref{MarLem} in the previous lecture) and set $\gep=\frac{1}{10}\gep(\BH^n)$ (it is helpful for some arguments to work with a constant which is strictly smaller than $\gep(\BH^n)$). Let 
$$
 M_{<\gep}=\{ x\in M: \text{InjRad}(x)<\gep/2\},~\text{and}~M_{\ge\gep}=\{ x\in M: \text{InjRad}(x)\ge\gep/2\}
$$ 
be the $\gep$-thin part and the $\gep$-thick part of $M$.

\begin{exercise}
Show that if $\ti x$ is a lift of $x\in M$ in $\BH^n$ then 
$$
 \text{InjRad}(x)=\frac{1}{2}\min\{ d_\gc(\ti x):\gc\in\gC\setminus\{1\}\}.
$$
\end{exercise}

\begin{thm}(The thick--thin decomposition \cite[Section 4.5]{Th})
Suppose that $\vol(M)<\infty$.
Then each connected component $M_{<\gep}^\circ$ of the thin part $M_{<\gep}$ is either
\begin{itemize}
\item a {\it tubular neighborhood of a short closed geodesic}, in which case $M_{<\gep}^\circ$ is homeomorphic to a ball bundle over a circle, or
\item a {\it cusp}, in which case $M_{<\gep}^\circ$ is homeomorphic to $N\times\BR^{>0}$ where $N$ is some $(n-1)$-crystallographic manifold.
\end{itemize}
The number of connected components of $M_{<\gep}$ is at most $C\cdot\vol(M)$ for some constant $C=C(\BH^n)$, and in case $n\ge 3$, the thick part $M_{\ge\gep}$ is connected. 
\end{thm}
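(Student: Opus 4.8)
The plan is to describe the thin part $M_{<\gep}$ locally by means of the Margulis lemma (Theorem~\ref{MarLem}), then glue the local pictures into the asserted global tube/cusp description, and finally deduce the counting and the connectivity statements from it.

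\emph{Local structure.} Fix $x\in M_{<\gep}$ and a lift $\ti x\in\BH^n$, and let $\Gamma_{\ti x}\le\gC$ be the subgroup generated by all $\gc\neq 1$ with $d_\gc(\ti x)\le\gep(\BH^n)$. By Theorem~\ref{MarLem} it has a finite index subgroup lying in a closed connected nilpotent Lie subgroup of $\text{Isom}(\BH^n)$; since $\gC$ is discrete and torsion free, and using the constraints on commuting isometries from Exercise~\ref{Ex:commute} (no nontrivial elliptics, no mixing of a hyperbolic with a parabolic), $\Gamma_{\ti x}$ is either infinite cyclic generated by a hyperbolic element, with a well defined axis $\ell=\min(\gc_0)$, or a nontrivial Bieberbach group of parabolics fixing a common $\xi\in\partial\BH^n$ --- which, acting on a horosphere about $\xi$ identified with $\BR^{n-1}$, by Theorem~\ref{thm:bie} is virtually $\BZ^k$ acting by translations on a $k$-plane. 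In either case $\ti x$ carries a well defined ``center'', the geodesic $\ell$ or the ideal point $\xi$, and this is precisely where the slack $\gep=\frac{1}{10}\gep(\BH^n)$ is spent: if $x'\in M_{<\gep}$ lies within distance $\gep$ of $x$, then any $\gc$ with $d_\gc(\ti x)<\gep$ satisfies $d_\gc(\ti{x'})<3\gep<\gep(\BH^n)$, so $\gc\in\Gamma_{\ti{x'}}$ and the two centers coincide; hence the center is locally constant along $M_{<\gep}$ and therefore constant on each connected component.

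\emph{Globalization.} For $\gc\neq1$ set $T_\gc=\{y\in\BH^n:d_\gc(y)<\gep(\BH^n)\}$; as $d_\gc$ is convex with minimum set $\min(\gc)$, this is an open convex set --- a metric neighbourhood of the axis if $\gc$ is hyperbolic, a horoball about its fixed point if $\gc$ is parabolic, and empty otherwise --- and the preimage of $M_{<\gep}$ lies inside $\bigcup_{\gc\neq1}T_\gc$. Whenever two of the $T_\gc$ meet, the corresponding elements lie in a common elementary group and hence share a center, so the connected components of $\bigcup_\gc T_\gc$ are exactly: maximal tubes $\mathcal T_\ell$ around geodesics $\ell$ occurring as axes of short elements of $\gC$, and maximal horoballs $\mathcal B_\xi$ about parabolic fixed points of $\gC$, with tubes about distinct geodesics, horoballs about distinct points, and tubes versus horoballs all pairwise disjoint. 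Consequently each component $\ti V$ of the preimage of $M_{<\gep}$ lies in a unique $\mathcal T_\ell$ or $\mathcal B_\xi$, its $\gC$-stabilizer $\gD$ equals the stabilizer of the center, and the corresponding component $V^\circ$ of $M_{<\gep}$ equals $\gD\backslash\ti V$. In the first case $\gD\cong\BZ$ is generated by the primitive hyperbolic $\gc_0$ with axis $\ell$, descending to a closed geodesic of length $|\gc_0|<\gep$; the nearest-point projection onto $\ell$ retracts $\ti V$ $\gD$-equivariantly onto $\ell$ with bounded, star-shaped --- hence ball-like --- fibres, exhibiting $V^\circ$ as a ball bundle over the circle $\gD\backslash\ell$. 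In the second case $\gD$ is a Bieberbach group of parabolics, and here $\vol(M)<\infty$ enters: integrating over the depth coordinate shows that the covolume of $\gD\backslash\mathcal B_\xi$ is a positive multiple of the covolume of the horospherical cross-section $\gD\backslash(\text{horosphere})$, so finiteness forces the latter to be compact, i.e.\ $\gD$ acts cocompactly on $\BR^{n-1}$ and $N:=\gD\backslash\BR^{n-1}$ is a closed $(n-1)$-crystallographic manifold; foliating $\mathcal B_\xi$ by horospheres then gives $V^\circ\cong N\times\BR^{>0}$.

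\emph{Counting and connectivity.} For the bound I would show that every component of $M_{<\gep}$ has covolume at least some $v_0=v_0(\BH^n)>0$; then their number is at most $\vol(M)/v_0$, so $C:=1/v_0$ works. For cusps this is immediate from the integral above, the horospherical covolume of a discrete parabolic group being bounded below by a universal constant. For tubes one compares $\mathcal T_\ell$ with an embedded sub-tube of definite radius about the short geodesic; the gap between $\gep$ and $\gep(\BH^n)$ is again essential, since an $\gep$-short geodesic sits inside the fatter $\gep(\BH^n)$-Margulis tube, whose radius, and hence volume, is bounded below uniformly. For connectivity of $M_{\ge\gep}$ when $n\ge3$: each component $V^\circ$ of the thin part has \emph{connected} frontier --- a single copy of $N$ for a cusp, and an $S^{n-2}$-bundle over a circle for a tube, which is connected precisely because $n-2\ge1$ (for $n=2$ a tube has two boundary circles, and the conclusion indeed fails). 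If $M_{\ge\gep}$ split as a disjoint union $A\sqcup B$ of nonempty open and closed subsets, then each connected frontier $\partial V^\circ$ would lie wholly in $A$ or in $B$; adjoining to $A$ the thin components whose frontier lies in $A$, and to $B$ those whose frontier lies in $B$, would then express the connected manifold $M=\gC\backslash\BH^n$ as a disjoint union of two nonempty open sets --- a contradiction.

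I expect the main obstacle to be the bookkeeping in the local-to-global step: making it fully rigorous that the ``center'' is globally well defined on each component of $M_{<\gep}$, so that the splitting into disjoint tubes and horoballs is genuine, together with the uniform lower bound on the covolume of a tube. Both rest essentially on having chosen $\gep$ strictly below the Margulis constant, plus a little elementary hyperbolic trigonometry.
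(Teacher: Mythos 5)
Your analysis of the structure of the thin components --- the Margulis lemma at a basepoint, the trichotomy for commuting isometries from Exercise \ref{Ex:commute}, grouping the sublevel sets $\{d_\gc<\gep\}$ according to a common axis or common ideal fixed point, precise invariance of each lifted component, and the use of $\vol(M)<\infty$ to force the horospherical cross-section to be compact, hence crystallographic --- is essentially the argument given in the paper, and your frontier argument for the connectedness of $M_{\ge\gep}$ when $n\ge 3$ is also the intended one. (One small inaccuracy: for a parabolic with nontrivial rotational part the set $\{d_\gc<\gep(\BH^n)\}$ is not a horoball, only a convex $\gc$-invariant neighborhood of the fixed point at infinity; this does not affect your argument, which uses only convexity and the common ideal point.)

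The genuine gap is in the counting step. The claim that every component of $M_{<\gep}$ has volume at least a universal $v_0(\BH^n)>0$ is false for tube components: if $\gc_0$ is a primitive hyperbolic element of translation length $L<\gep$, then $\sinh(d_{\gc_0}(x)/2)=\cosh(\rho)\sinh(L/2)$, where $\rho$ is the distance from $x$ to the axis, so the radius of the tube $\{d_{\gc_0}<\gep\}$ tends to $0$ as $L\nearrow\gep$ (and for $L>\gep/2$ the higher powers of $\gc_0$ contribute nothing); hence the corresponding component of $M_{<\gep}$ can have arbitrarily small volume and no universal $v_0$ exists. (Likewise, ``the horospherical covolume of a discrete parabolic group'' has no universal lower bound; what is bounded below is the covolume on the boundary horosphere, where every nontrivial displacement is $\ge\gep$.) Your parenthetical appeal to the fatter $\gep(\BH^n)$-tube is the right repair, but then the quantity you are bounding from below is the volume of $\mathcal{T}_\ell$, not of the $\gep$-thin component, and you must add that the regions $\mathcal{T}_\ell$, $\mathcal{B}_\xi$ attached to distinct components of $M_{<\gep}$ are pairwise disjoint and precisely invariant --- which your globalization step does provide --- so that their images in $M$ are disjoint, each of volume $\ge v_0(n,\gep(\BH^n))$, and the volumes add up to at most $\vol(M)$. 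The paper reaches the same linear bound slightly differently: it attaches disjointly embedded $\gep$-balls near the boundary of each thin component, the gap between $\gep$ and $\gep(\BH^n)$ guaranteeing that distinct components are far enough apart for these balls to be disjoint.
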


\begin{proof}
Let $\ti M_{<\gep}$ be the pre-image in $\BH^n$ of the thin part of $M$. Observe that 
$$
 \ti M_{<\gep}=\cup_{\gc\in\gC\setminus\{1\}}\{d_\gc<\gep\}
$$ 
is the union of the $\gep$-sub-level sets of the functions $d_\gc$. Note that a sub-level set of the displacement function of a hyperbolic isometry is a convex neighborhood of the axis and a sub-level set of a parabolic isometry is a convex neighborhood of the fixed point at infinity. Let $M_{<\gep}^\circ$ be a connected component of $M_{<\gep}$ and $\ti M_{<\gep}^\circ$ a connected component of its pre-image in $\ti M_{<\gep}$. Then $\ti M_{<\gep}^\circ$ is a $\gC$-precisely invariant set, i.e. for $\gc\in \gC$, either $\gc\cdot \ti M_{<\gep}^\circ=\ti M_{<\gep}^\circ$ or $\gc\cdot \ti M_{<\gep}^\circ\cap\ti M_{<\gep}^\circ=\emptyset$. Let $\gC^\circ=\{\gc\in\gC:\gc\cdot \ti M_{<\gep}^\circ=\ti M_{<\gep}^\circ\}$. Then $M_{<\gep}^\circ=\gC^\circ\backslash \ti M_{<\gep}^\circ$.

Consider $\ga,\gb\in\gC\setminus\{1\}$ such that $\{d_\ga<\gep\}\cap\{d_\gb<\gep\}\ne\emptyset$. By the Margulis lemma, for some $1\le i,j\le m$ the group $\langle \ga^{i},\gb^{j}\rangle$ is nilpotent. Let $\eta$ be a non-trivial central element in this group. By Exercise \ref{Ex:commute} if $\eta$ is hyperbolic then so are $\ga$ and $\gb$ and they all have the same axis, and if $\eta$ is parabolic then so are $\ga,\gb$ with the same fixed point at infinity. It follows that $\ti M_{<\gep}^\circ$ is of the form $\cup_{\gc\in I}\{d_\gc<\gep\}$ where $I$ consists either of hyperbolic elements sharing the same axis or of parabolic elements fixing a common fixed point at infinity. In the first case, the discreteness of the torsion free group $\gC$ implies that the element $\gc_0$ in $I$ with minimal displacement generates $\gC^\circ$, and in particular, the later is a cyclic group. In the second case, $\gC^\circ$ preserves the horospheres around the fixed point at infinity, and it is not hard to see that $M_{<\gep}^\circ$ is homeomorphic to the quotient of a horoball by $\gC^\circ$. (In this case one can actually take $I=\gC^\circ\setminus\{1\}$.) The assumption $\vol (M)<\infty$ implies that the quotient of the boundary horosphere by the group $\gC^\circ$ must be compact, hence a crystallographic manifold.

In dimension $n\ge 3$ it follows that the boundary of each thin component is connected, since the boundary of a tube is an $(n-2)$-sphere bundle over a circle, hence connected, while the boundary of a cusp is always connected. This implies that the connectedness of the thick part $M_{\ge\gep}$.

Finally, in order to bound the number of components of $M_{<\gep}$ note that one can attach disjointly injected $\gep$-balls near the boundary of every component. (One of the reasons for chosing $\gep$ strictly smaller than the Margulis' constant was to make sure that the thin component are not too close to one another.)
\end{proof}

\begin{rem}
An analog result holds (with minor changes in the proof) for every rank one symmetric space (and even for every negatively curved Hadamard space).
\end{rem}

\section{Presentations of torsion free lattices}
The thick-thin decomposition is an important ingredient in the proof of the following:

\begin{thm}[\cite{BGLM,Ge1}]\label{thm:presentation}
There is a constant $c=c(G)$ such that every
torsion free lattice $\gC\le_L G$ admits a presentation $\gC=\langle \gS | R\rangle$ with $|\gS|,|R|\le c\cdot\vol (G/\gC)$. Furthermore, unless $G\cong\PSL_2(\BC)$ there is such a presentation in which the length of every relation is at most $3$. 
\end{thm}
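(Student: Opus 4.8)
The plan is to build the presentation geometrically from the thick--thin decomposition of the locally symmetric space $M = \gC\backslash X$, where $X = G/K$ is the symmetric space of $G$ (after passing to the identity component and quotienting by the maximal compact $K$, which does not affect the statement for torsion free $\gC$). The starting point is that $\gC = \pi_1(M)$, and a presentation of $\pi_1$ of a (nice enough) space can be read off from a CW-structure or, more conveniently, from a good cover: if $\mathcal{U} = \{U_i\}$ is a cover of $M$ by contractible open sets with contractible (or empty) pairwise and triple intersections, then the nerve of $\mathcal{U}$ gives $\gC$ as a fundamental group of a $2$-complex, hence a presentation with one generator per edge of the nerve and one relator per triangle. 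So the whole game is to produce such a cover of bounded combinatorial size, i.e. with $O(\vol M)$ vertices, edges and triangles, and with the extra feature that the relators have bounded length.

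The key steps, in order: (1) Decompose $M = M_{\geq\gep} \cup M_{<\gep}$ using the thick--thin decomposition. Each thin component is either a tube around a short geodesic or a cusp $N\times\BR^{>0}$ with $N$ crystallographic; in either case it is homotopy equivalent to a space with a bounded presentation of $\pi_1$ (a cyclic group, or a crystallographic group which by Bieberbach's theorem is virtually $\BZ^k$ with bounded generators/relations --- here one uses that $\vol M$ controls the number and complexity of the thin components). (2) Cover the thick part $M_{\geq\gep}$ efficiently. The crucial geometric input is that $M_{\geq\gep}$ has injectivity radius bounded below by $\gep/2$ \emph{and} bounded local geometry (curvature pinched, so balls of radius $\gep$ are uniformly bi-Lipschitz to balls in $X$); hence a maximal $\gep/4$-separated set in $M_{\geq\gep}$ has cardinality $O(\vol(M_{\geq\gep})) = O(\vol M)$ by a volume-packing argument, and the $\gep/2$-balls around these points form a good cover with uniformly bounded multiplicity. (3) Merge the thick cover with the thin pieces along the (bounded number of, bounded complexity) boundary components, invoke van Kampen / the nerve theorem to assemble the presentation, and bound $|\gS|, |R|$ by the total number of cells, which is $O(\vol M)$.

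For the "furthermore" clause --- relations of length $\leq 3$ --- the point is to arrange that the nerve is literally a simplicial complex (all intersections of the $\gep$-balls in the cover are contractible or empty, with no "higher" combinatorics), so that every relator comes from a genuine $2$-simplex and thus has length exactly $3$. This works as soon as the $\gep$-balls in a bounded-geometry space with injectivity radius $\gep/2$ have the property that nonempty intersections are contractible, which holds because each such ball is convex-like (geodesically convex inside $X$, and the quotient map is injective on the relevant scale). The exceptional case $G \cong \PSL_2(\BC)$ is exactly where a cusp cross-section $N$ is a $2$-torus: then $\gC^\circ$ for a cusp is $\BZ^2$, whose minimal presentation has the single commutator relator $[a,b]$ of length $4$, and this genuinely cannot be shortened to length $3$ within a presentation of that size, so one simply excludes it. (In all other rank-one cases, or higher rank, the cusp groups are either cyclic or higher-dimensional crystallographic groups whose standard presentations can be rewritten with ternary relations, absorbing the cost into the constant $c$.)

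The main obstacle I expect is step (2): getting the \emph{uniform} bound on the size of the good cover of the thick part, which requires simultaneously (a) a lower injectivity radius bound --- handed to us by the definition of the thick part --- and (b) an upper bound on the number of $\gep$-balls meeting a given one, i.e. controlling the local geometry near the thin/thick interface so that the packing argument genuinely yields $O(\vol M)$ and not something worse. Making the nerve simplicial (for the length-$3$ claim) while keeping the count linear in the volume is the delicate part, since shrinking balls to force contractible intersections can inflate their number; one has to choose the separation constant and the radius carefully relative to $\gep(\BH^n)$ --- precisely the reason, foreshadowed in the thick--thin proof, for working with a constant strictly smaller than the Margulis constant.
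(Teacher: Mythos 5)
Your general framework (thick--thin decomposition plus the nerve of an $\gep$-ball cover of the thick part) is the same as the paper's, and your step (2) -- the packing bound giving $O(\vol M)$ balls of bounded multiplicity, with convexity of balls making the nerve homotopy equivalent to $M_{\ge\gep}$ -- is exactly what is done there. But your step (3) misses the key lemma, and this is where the argument for the ``furthermore'' clause breaks. The paper does \emph{not} present the thin pieces and merge them by van Kampen; it observes that regluing the thin components does not change $\pi_1$ at all when $\dim X\ne 3$: gluing a cusp never changes the homotopy type (the cusp deformation retracts to its boundary), and gluing a tube around a short geodesic gives $\pi_1(M_{\ge\gep})*_{\BZ}\BZ$, where the boundary of the tube is an $(n-2)$-sphere bundle over a circle (so has $\pi_1\cong\BZ$ once $n\ge 4$) and the two $\BZ$'s cancel. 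Hence $\pi_1(M)\cong\pi_1(M_{\ge\gep})$ and the nerve presentation, with its length-$3$ relators, already presents $\gC$; no gluing relations are needed. In your scheme, by contrast, the van Kampen relations identify generators of $\pi_1(\partial M_{<\gep}^\circ)$ with words in the nerve generators of the thick part, and the length of such a word is governed by the diameter of the boundary component, which is not bounded independently of $M$; so while your route can still deliver $|\gS|,|R|\le c\cdot\vol(G/\gC)$ (the number of thin components and the size of each thin presentation are controlled), it does not yield relations of bounded length.

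Your diagnosis of the exceptional case $G\cong\PSL_2(\BC)$ is also off. The problem in dimension $3$ is not the cusps (which, as above, contribute nothing to $\pi_1$), and it is not that $\BZ^2$ needs a length-$4$ commutator relator: a single relation of length $4$ can always be split into two relations of length $3$ by adding one generator, at bounded cost -- the paper does exactly this for surface groups, and indeed $\BZ^2=\langle a,b,c\mid abc^{-1},\,bac^{-1}\rangle$. The genuine obstruction is the \emph{tube} case in dimension $3$: the boundary of a Margulis tube is a $2$-torus, so the gluing gives $\pi_1(M_{\ge\gep})*_{\BZ^2}\BZ$, and killing the kernel of $\BZ^2\to\BZ$ (the meridian) forces a relation whose length, expressed in the thick-part generators, is uncontrolled. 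That is why $\PSL_2(\BC)$ is excluded from the length-$3$ statement, and why the cancellation lemma above -- absent from your proposal -- is the crux of the proof.
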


Let me explain the idea of the proof. The case $n=2$ is well known --- $\gC$ is a surface  group on $2g$ generators, where $g$ is the genus of the surface $\gC\backslash\BH^2$ and $-\chi=2g-2$ is proportional to the volume by the Gauss--Bonnet theorem, and one relation of length $4g$. With the price of adding $<4g$ generators, we can "break" the relation to $<4g$ piece of length $3$.

Suppose $n> 3$. Gluing the thin components to the thick part one by one and using the Van--Kampen theorem, one sees that $\pi_1(M)\cong \pi_1(M_{\ge\gep})$. Indeed, when gluing a cusp the homotopy type is unchanged (this is also the case in dimension $3$) while when gluing a tubular neighborhood of a closed geodesic we see that
$$
 \pi_1(M_{\ge\gep}\cup M_{<\gep}^\circ)\cong \pi_1(M_{\ge\gep})*_\BZ\BZ,
$$
where these $\BZ$'s correspond to the fundamental groups of $M_{<\gep}^\circ$ and of its boundary, hence the map between them is an isomorphism, and they cancel each other.

Now $M_{\ge\gep}$, being an $\gep$-thick manifold (forget for a moment the boundary), can be covered by $\frac{\vol(M)}{\vol(B_{\gep/2})}$ balls of radius $\gep$ with bounded overlaps --- this can be done by taking the centers of these balls to form a maximal $\gep$-discrete set. The nerve of a cover is a simplicial complex whose cells correspond to collections of sets of the cover which have a common nonempty intersection. Thus the vertices of the nerve bijectively correspond to the sets of the cover, the edges correspond to pairs of sets which are not disjoint, etc. Since balls in $\BH^n$ are convex, and intersections of convex sets are still convex and hence contractible, it follows from \cite[Theorem 13.4]{BT}) that the nerve $\mathcal{R}$ of our cover is 
homotopic to $M_{\ge\gep}$.

Let me describe how to give an efficient presentation to $\pi_1(\mathcal{R})$.
We have already noted that $\mathcal{R}$ has at most $C\cdot\vol (M)$ vertices with $C=\frac{1}{\vol(B_{\gep/2})}$. Since neighbouring vertices corresponds to balls whose centres are at distance at most $2\gep$, and since the $\gep/2$-balls around the centres of our cover are disjoint it follows that the degree at any vertex is at most 
$d=[\frac{\vol(B_{2.5\gep})}{\vol(B_{\gep/2})}]$. 
Fix a spanning tree $T$ for 
$\mathcal{R}$, and take the generating set $\gS$ for $\pi_1(\mathcal{R})\cong\pi_1(M)$ 
which consists those closed loops which contain exactly one edge outside $T$.
We thus obtain a generating set of size less then the number of edges of the 
1-skeleton $\mathcal{R}^1$ which is at most $\frac{C\vol (M)d}{2}$.
In other words, we take for each edge of $\mathcal{R}^1\setminus T$ the element
of $\pi_1(\mathcal{R}^1)$ which corresponds to the unique cycle (with 
arbitrarily chosen orientation) which is obtained by adding this edge to $T$.
Additionally, let the set of relations $W$ consists of exactly 
those words which are induced from 2-simplexes of $\mathcal{R}^2$ (we take
one such relation for each 2-simplex). In this way we obtain
a set of relations of size less than $C\vol (M) d^2$ which is a bound for
the number of triangles in $\mathcal{R}^1$.
Finally, the length of each $w\in W$ is exactly the number of edges in the 
corresponding 2-simplex which lie outside $T$, i.e. at most $3$.

For $n=3$ (i.e. when $G\cong\PSL_2(\BC)$) we get that the fundamental group of the boundary $\partial M_{<\gep}^\circ$ of a tubular neighborhood of a closed geodesic is $\BZ^2$ and hence 
$$
 \pi_1(M_{\ge\gep}\cup M_{<\gep}^\circ)\cong \pi_1(M_{\ge\gep})*_{\BZ^2}\BZ,
$$
and the kernel of the map between $\BZ^2$ to $\BZ$ is the cause of relations of an uncontrolled length. 

\begin{rem}\label{rem:bounded-presentation}
\begin{enumerate}
\item
The rigorous proof of the Theorem \ref{thm:presentation} is in fact much more involved, due to the textured structure of the boundary of $M_{\ge\gep}$ (see \cite{BGLM,Ge1} for details).

\item
A similar theorem holds for every non-compact semi-simple Lie group $G$, with the exceptions of $\PSL_2(\BC),\PSL_3(\BR)$ and $\PSL_2(\BR)^2$, but the proof is more complicated (see \cite{Ge1}). While the analogous statement is evidently fouls for $\PSL_2(\BC)$, it conjecturally holds for the other two exceptions, however the current proofs do not apply in these cases.
\end{enumerate}
\end{rem}

Theorem \ref{thm:presentation} is a week version of the following general conjecture, suggesting that the homotopy complexity of locally symmetric spaces are bounded linearly by their volume:

\begin{conj}\label{conj:HV}
Let $X$ be a symmetric space of non-compact type (see Definition \ref{de:symm-space}) and suppose that $\dim X\ne 3$. Then there are constants $\ga$ and $d$, depending only on $X$, such that every irreducible complete Riemannian manifold $M$ locally  isometric to $X$ is homotopically equivalent to a simplicial complex $\mathcal{R}$ with at most $\ga\cdot\vol(M)$ vertices and all the vertices degrees are bounded by $d$. 
\end{conj}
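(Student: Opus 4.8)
The plan is to follow the strategy behind Theorem~\ref{thm:presentation}: represent $M$, up to homotopy, as the nerve of a carefully chosen open cover, and then read both the vertex bound and the degree bound off the local geometry of $X$. Fix $\epsilon$ equal to a small fixed multiple of the Margulis constant of $X$ (Theorem~\ref{MarLem}) and consider the $\epsilon$-thick--thin decomposition $M=M_{\ge\epsilon}\cup M_{<\epsilon}$.

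On the thick part the argument is essentially the one already used for Theorem~\ref{thm:presentation}. Since $X$ is a symmetric space of non-compact type it is a Hadamard manifold, so metric balls are convex and all finite intersections of balls are convex, hence contractible; thus by the nerve lemma (\cite[Theorem~13.4]{BT}) the nerve of a cover of $M_{\ge\epsilon}$ by embedded $\epsilon$-balls is homotopy equivalent to $M_{\ge\epsilon}$. Taking the centres to be a maximal $\epsilon$-separated net, the disjointly embedded $\epsilon/2$-balls give at most $\vol(M)/\vol(B_{\epsilon/2})$ vertices, and the number of net points inside any $5\epsilon$-ball --- bounded uniformly by homogeneity of $X$ --- bounds every vertex degree by a constant $d=d(X)$. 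So the conjecture holds outright whenever $M$ is $\epsilon$-thick.

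The real content is the thin part, and here I would try to reduce to the thick part. By the generalised Margulis lemma each component of $M_{<\epsilon}$ is, virtually, the quotient of a convex neighbourhood of a flat subspace by a virtually nilpotent group, so it is homotopically ``cheap''; moreover the number of thin components is at most $C(X)\vol(M)$, since one can attach a disjointly embedded $\epsilon$-ball near the boundary of each. Gluing the thin components onto the simplicial model of $M_{\ge\epsilon}$ one at a time, a van~Kampen / Mayer--Vietoris bookkeeping should show that each gluing changes the complex by at most $c(X)$ vertices of degree at most $c(X)$: in rank one this is literally the tube/cusp analysis used for Theorem~\ref{thm:presentation} (a cusp does not change the homotopy type; a tube contributes an amalgamation over $\BZ$ that cancels), and in higher rank one wants the analogous statement for the ``textured'' neighbourhoods of higher-dimensional flats. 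Since the thin contributions, the number of thin components, and the thick-part data are all linear in $\vol(M)$, so are the final vertex count and degree. For $M=\gC\backslash X$ with $\gC$ a lattice and $G$ outside the three exceptional groups this is essentially Theorem~\ref{thm:presentation}; the genuinely new cases are $\dim X=4,5$ (where Theorem~\ref{thm:presentation} applies only with relations of uncontrolled length, cf. Remark~\ref{rem:bounded-presentation}) and manifolds $M$ that are not lattice quotients, such as geometrically infinite ones.

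The \emph{main obstacle} is exactly this passage beyond rank one. In higher rank a thin component need not deformation retract onto its core flat or onto its boundary, and the subgroups amalgamated in van~Kampen --- the fundamental groups of the textured boundary pieces --- need not inject into $\pi_1(M_{\ge\epsilon})$, which is precisely what spoils the length-$\le 3$ clause for $\PSL_3(\BR)$ and $\PSL_2(\BR)^2$; obtaining a bounded-degree combinatorial model of these virtually nilpotent pieces and of their inclusions is the crux. A secondary point is the non-lattice case: when $\vol(M)=\infty$ the vertex bound is vacuous, but one must still produce a bounded-degree model, so the whole thin-part analysis has to be carried out $\epsilon$-locally rather than using global finiteness. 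Finally, $\dim X=3$ must be excluded because there a tube component has $2$-torus boundary and the resulting amalgamation $\pi_1(M_{\ge\epsilon})*_{\BZ^2}\BZ$ has a nontrivial kernel, so no bounded simplicial model exists --- a real phenomenon (the statement is false for $\PSL_2(\BC)$), not a shortcoming of the proof.
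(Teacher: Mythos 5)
You should first note that the statement you were asked to prove is stated in the paper as a \emph{conjecture}, not a theorem: the paper offers no proof of it, and records only that it has been confirmed for non-compact \emph{arithmetic} locally symmetric spaces in \cite{Ge1}. So a complete blind proof would be new mathematics, and indeed your write-up is not a proof but an outline of the known strategy with the genuinely open steps flagged rather than closed. The thick-part argument you give (maximal $\gep$-separated net, convexity of balls in a Hadamard manifold, the nerve lemma of \cite[Theorem 13.4]{BT}, packing bounds for the vertex count and degree) is sound and is exactly the mechanism behind Theorem \ref{thm:presentation}, but it only settles the conjecture for manifolds that are $\gep$-thick everywhere.

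The gap is everything you defer to the thin part. Your claim that ``each gluing changes the complex by at most $c(X)$ vertices of degree at most $c(X)$'' is asserted, not argued, and in higher rank it is precisely what is not known: a thin component is then a quotient of a sublevel set of displacement functions of a virtually nilpotent group whose structure (and whose ``textured'' boundary, cf.\ Remark \ref{rem:bounded-presentation}) does not reduce to the tube/cusp dichotomy, and one has no deformation retraction onto a core of controlled combinatorial size, nor control of how its nerve attaches to that of the thick part. Note also that even the weaker presentation statement is still only conjectural for $\PSL_3(\BR)$ and $\PSL_2(\BR)^2$ (Remark \ref{rem:bounded-presentation}(2)), whose symmetric spaces have dimensions $5$ and $4$ and are therefore \emph{inside} the scope of the conjecture; and the conjecture concerns arbitrary complete $X$-manifolds, not just finite-volume lattice quotients, so the finiteness used to bound the number of thin components is unavailable in general, as you yourself observe. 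In short, your proposal correctly reproduces the heuristic that motivates the conjecture and correctly locates the difficulty, but the higher-rank thin-part analysis --- the actual content of the conjecture --- remains unproved, consistent with the paper treating the statement as open.
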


For the special case of {\it non-compact arithmetic} locally symmetric spaces, Conjecture \ref{conj:HV} has been confirmed in \cite{Ge1}. 

\section{General symmetric spaces}

\begin{defn}\label{de:symm-space}
A symmetric space is a complete Riemannian manifold $X$ such that for every $p\in X$ there is an isometry $i_p$ which fixes $p$ and reflects the geodesics through $p$.  
\end{defn}

A symmetric space admits a canonical De Rham decomposition $X=\prod X_i$ to irreducible factors. We shall say that $X$ is of non-compact type if each of the $X_i$ is neither compact nor $\cong\BR$. In that case $\text{Isom}(X)^\circ$ is a center-free semisimple Lie group without compact factors. Conversely, if $G$ is a connected center-free semisimple Lie group without compact factors then $G$ admits a, unique up to conjugacy, maximal compact subgroup $K$ and $G/K$ admits a canonical metric with respect to which it is a symmetric space of non-compact type with isometry group whose identity component is $G$. Symmetric spaces of non-compact type are non-positively curved, i.e. they are CAT(0), or equivalently the distance function $d:X\times X\to\BR^{\ge 0}$ is convex.
A {\it flat subspace} of $X$ is a totally geodesic subspace isometric to a Euclidian space. A {\it flat} is a maximal flat subspace. $G$ acts transitively on the set of flats and in fact on the set of pairs $(x,F)$ consisting of a flat $F$ and a point $x\in F$, but $G$ does not act transitively on flat subspaces of a given positive but not maximal dimension. The rank of $X$ is the dimension of a flat, and is equal to the algebraic rank of $G$. $X$ is strictly negatively curved iff $\text{rank}(X)=1$. As in the rank one case, there are three types of isometries: elliptic ($\text{Fix}(g)\ne\emptyset$), parabolic ($\min(g)=\emptyset$) and hyperbolic ($\min(g)\ne\emptyset=\text{Fix}(g)$).\footnote{We keep the same notations for displacement functions, min-sets, etc. as introduced in the hyperbolic spaces case.} 
We refer the reader to \cite[Appendix 5]{BGS} for a very nice and short exposition of symmetric spaces, and to \cite{Eb} for a much more exhaustive treatment. 

\begin{exam}
As a model for the symmetric space of $G=\PSL_n(\BR)$, denoted $P^1(n,\BR)$, we can take the space of all unimodular positive definite $n\times n$ matrices on which $G$ acts by similarity: $g\cdot p:= gpg^t$. The tangent space at $I$ is the space of trace $0$ symmetric $n\times n$ matrices. The inner product at $T_I(P^1(n,\BR))$ is given by $\langle X,Y\rangle:=\text{trace}(XY)$, the geodesics through $I$ are of the form $\exp(tX)$ and the curvature at $X,Y\in T_I(P^1(n,\BR))$ is given by $K(X,Y)=-\| [X,Y]\|$.
\end{exam}

As in the special hyperbolic case, there is a one to one correspondence between discrete subgroups $\gC$ of $G$ and complete $X$-orbifolds $M=\gC\backslash X$, where torsion free groups correspond to manifolds, and lattices corresponds to orbifolds of finite volume. Moreover, up to re-normalizing the Haar measure on $G$ we may suppose that it corresponds to the Riemannian measure on $X$, in the sense that $\vol(G/\gC)=\vol(\gC\backslash X)$.

\section{Number of generators of lattices}
We wish to push further the philosophy that one can analyze properties of $\gC$ by studying the topology of $M=\gC\backslash X$.
When $\gC$ has torsion, $M$ has ramified points, i.e. it is not a manifold, and dealing with the geometry of orbifolds is much more delicate. Still, using some basic Morse theory, the nonpositivity of the curvature and the Margulis' lemma, one can prove:

\begin{thm}[\cite{Ge2}]\label{thm:d(Gamma)}
Let $G$ be a connected semisimple Lie group without compact factors. There is a constant $C=C(G)$ such that $d(\gC)\le C\cdot \vol(G/\gC)$ for every discrete group $\gC\le G$, where $d(\gC)$ denotes the minimal cardinality of a generating set.
\end{thm}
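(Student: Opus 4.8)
The plan is to reduce to the geometry of the orbifold $M=\gC\backslash X$ and count, via a Morse-theoretic argument, how many ``topologically essential'' pieces $M$ must consist of, each piece of volume bounded below by a constant depending only on $G$. First I would reduce to the torsion-free case up to finite index: by the Margulis lemma (Theorem \ref{MarLem}) there is a uniform constant $m=m(G)$ such that the thin part is covered by virtually-nilpotent almost-stabilizers, so after passing to a torsion-free subgroup of index bounded by a function of $m$ one changes $d(\gC)$ and $\vol(G/\gC)$ only by a multiplicative constant. Thus it suffices to bound $d(\pi_1(M))$ linearly in $\vol(M)$ for a complete locally symmetric manifold $M$ of finite volume.

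Next I would invoke the thick--thin decomposition: set $\gep=\tfrac{1}{10}\gep(X)$, and recall (as in the proof of Theorem \ref{thm:presentation}) that gluing the thin components back onto $M_{\ge\gep}$ does not increase the number of generators — cusps do not change $\pi_1$, and tube components contribute an amalgamation over $\BZ$ (or over a nilpotent group of the cusp cross-section) in which the attached free factor cancels, so $d(\pi_1(M))\le d(\pi_1(M_{\ge\gep}))$ up to an additive term bounded by the number of thin components, which is itself $\le C\vol(M)$. So the real task is to bound the number of generators of the thick part. Here is where I would use Morse theory on the distance function from a basepoint in the universal cover $X$, combined with the lower bound on injectivity radius on $M_{\ge\gep}$: a maximal $\gep$-separated net in $M_{\ge\gep}$ has cardinality at most $\vol(M)/\vol(B_{\gep/2})$, the $\gep$-balls about the net cover $M_{\ge\gep}$, and — because $X$ is CAT(0), so balls and their intersections are convex hence contractible — the nerve $\RR$ of this cover is homotopy equivalent to $M_{\ge\gep}$ by the nerve lemma. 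The $1$-skeleton of $\RR$ has at most $C\vol(M)$ vertices with uniformly bounded degree $d=d(G)$ (two net points joined by an edge are within $2\gep$, and the $\gep/2$-balls are disjoint, giving the bound), so $\pi_1(\RR)$ is generated by the ``extra edges'' relative to a spanning tree, of which there are at most $\tfrac{1}{2}C\,d\,\vol(M)$. Composing all the estimates yields $d(\gC)\le C(G)\vol(G/\gC)$.

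The main obstacle — and the reason the cited paper \cite{Ge2} is needed rather than a two-line argument — is precisely the presence of torsion and, relatedly, the ``textured'' structure near $\partial M_{\ge\gep}$: the nerve/thick-thin machinery above is clean for torsion-free \emph{cocompact} $\gC$, but for general discrete $\gC$ one must (i) handle the orbifold singularities of $M=\gC\backslash X$, where balls are no longer contractible and the nerve lemma must be replaced by an equivariant version or by a careful choice of net avoiding the singular locus, and (ii) control the overlap geometry near the boundary of the thick part, where the thin components are glued in and where injectivity radius degenerates, so that the covering-and-nerve estimate still produces a complex whose $1$-skeleton has bounded degree. Passing to a torsion-free finite-index subgroup via Selberg's lemma is cheap \emph{for linear $\gC$}, but one wants a bound on the index depending only on $G$; this is supplied by the Jordan--Zassenhaus--Kazhdan--Margulis circle of ideas (Theorems \ref{thm:jordan}, \ref{MarLem}) giving a uniform bound on the order of finite subgroups of $G$ up to conjugacy, hence a uniform torsion-freeing index. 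Once those two technical points are absorbed, the linear bound falls out of the volume-versus-combinatorial-complexity comparison sketched above.
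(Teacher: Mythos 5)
Your reduction to the torsion-free case is exactly where the argument breaks down. You claim that the Jordan--Zassenhaus--Kazhdan--Margulis circle of ideas gives a uniform bound, depending only on $G$, on the order of finite subgroups of lattices, hence a torsion-free subgroup of uniformly bounded index. That is false: Jordan's theorem (Theorem \ref{thm:jordan}) bounds only the index of an \emph{abelian} subgroup inside a finite subgroup of a compact group, and already $\SO(2)\subset\PSL_2(\BR)$ contains cyclic subgroups of every order; correspondingly lattices (triangle groups in $\PSL_2(\BR)$, and analogous examples in all the groups covered by the theorem) contain finite subgroups of arbitrarily large order. Since a finite subgroup meets a torsion-free subgroup trivially and hence injects into its coset space, the minimal index of a torsion-free subgroup is unbounded as $\gC$ ranges over lattices in a fixed $G$, so no constant $c(G)$ of the kind you need exists. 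There is also a circularity: Selberg's lemma applies to \emph{finitely generated} linear groups, whereas finite generation of $\gC$ is precisely what the theorem is used to establish (it appears as a corollary); in the paper Selberg's lemma is only invoked for the local groups $\gD_x=\langle\gS_x\rangle$, which are finitely generated by construction.

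The paper's proof handles torsion directly instead of removing it. The key input is the codimension lemma: for an irreducible $X$ with $\dim X>2$, every nontrivial $g$ satisfies $\dim X-\dim(\min(g))\ge 2$, so $\ti Y=X\setminus\bigcup_{\gc\ne 1}\min(\gc)$ is connected and $\gC$ acts \emph{freely} on it, whence $\gC$ is a quotient of $\pi_1(Y)$ with $Y=\gC\backslash\ti Y$. One then introduces the invariant function $\ti\psi=\sum_{\gc\in\gC_\circ}f(d_\gc-|\gc|)$, proves via the Margulis lemma and a three-case analysis (finite local group; hyperbolic central element; parabolic central element) that $\nabla\psi$ vanishes only where $\psi$ does, checks properness, and applies the Morse lemma to retract $Y$ onto $\psi^{-1}(0)$, a set on which the injectivity radius is at least $\gep/2$; a maximal $\gep/2$-separated net and a nerve count there give the linear bound. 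Your ball-cover/nerve endgame is essentially this last step (and essentially Theorem \ref{thm:presentation} in the torsion-free rank-one case), but without the min-set deletion and the Morse function $\psi$ the torsion/orbifold difficulty is not circumvented; note also that the thick--thin structure you invoke is stated only for rank one, while the paper's $\psi$-argument is what generalizes (with more work) to higher rank.
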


Let us outline the idea of the proof.
For $G=\PSL_2(\BR),~X=\BH^2$ the theorem can be deduced from the Gauss--Bonnet theorem, so let us assume that $n\ge 3$.

\begin{lem}
Let $X$ be an irreducible symmetric space of dimension $>2$. Let $g\in G=\text{Isom}(X)^\circ$ be a non-trivial element. Then $\dim(X)-\dim (\min(g))\ge 2$.
\end{lem}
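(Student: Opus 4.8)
The plan is to analyze the min-set of a non-trivial isometry $g$ of $X$ via the structure of its centralizer, exploiting the hypotheses that $X$ is irreducible and $\dim X>2$. The starting point is the standard fact (for CAT(0) symmetric spaces) that if $g$ is semisimple then $\min(g)$ is a non-empty, closed, convex, totally geodesic submanifold which is preserved by the centralizer $C_G(g)$, and moreover $\min(g)$ splits isometrically as a Riemannian product $Y \times Z$, where the $Z$-factor is a flat along which $g$ translates and $Y$ is a symmetric space on which $g$ acts trivially; this is the Riemannian incarnation of the fact that $C_G(g)$ acts transitively on $\min(g)$. In the parabolic case $\min(g)=\emptyset$ and there is nothing to prove, so I may assume $g$ is semisimple, and indeed, after passing to the core, I may assume $g$ is either elliptic or hyperbolic. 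The point is then to bound the codimension of $Y\times Z$ in $X$ from below by $2$.

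First I would treat the hyperbolic (axial) case. Here $g$ translates along a geodesic and $\min(g)$ contains at least a $1$-dimensional flat factor $Z$. If $\dim(\min(g))=\dim X$ then $\min(g)=X$, forcing $g$ to be a Clifford translation; but a symmetric space of non-compact type admits no non-trivial Clifford translations (equivalently, its isometry group has trivial center and no Euclidean de Rham factor), contradiction — so the codimension is at least $1$. To upgrade this to codimension $\ge 2$, suppose the codimension were exactly $1$: then $\min(g)$ would be a totally geodesic hypersurface, hence (being also convex and complete) would bound, and one shows this is incompatible with irreducibility and $\dim X \ge 3$. Concretely, a totally geodesic submanifold of codimension $1$ in a symmetric space of non-compact type of dimension $\ge 3$ is again symmetric of non-compact type, and its normal bundle together with the product structure $\min(g)=Y\times Z$ would exhibit $X$ itself as reducible or force $\dim X \le 2$; this is where I would invoke the classification-flavoured input about totally geodesic hypersurfaces (e.g.\ that $X$ then contains a totally geodesic $\BH^2$ normal to it and splitting off), contradicting irreducibility.

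For the elliptic case, $\min(g)=\mathrm{Fix}(g)$ is a totally geodesic submanifold, and at a fixed point $p$ the differential $d_p g$ is a non-trivial orthogonal transformation of $T_pX$ whose fixed subspace is $T_p\mathrm{Fix}(g)$. A non-trivial element of $\SO(T_pX)$ fixes a subspace of codimension at least $2$ unless it is a reflection in a hyperplane. So the only way to have codimension $1$ is $d_pg$ = reflection, i.e.\ $g$ is an involution whose fixed set is a totally geodesic hypersurface; this is again excluded for an irreducible $X$ with $\dim X \ge 3$ by the same hypersurface input as above (a reflection hypersurface would force a $\BH^2$ or line factor splitting off orthogonally).

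The main obstacle I anticipate is the codimension-$1$ exclusion — ruling out totally geodesic hypersurfaces in an irreducible symmetric space of non-compact type of dimension $\ge 3$. The cheap bound gives only codimension $\ge 1$ from the absence of Clifford translations and the fact that $d_pg\ne\mathrm{id}$; squeezing out the extra dimension genuinely uses irreducibility, and the cleanest route is to show a totally geodesic hypersurface $W\subset X$ would produce a de Rham factor. One robust way: the second fundamental form vanishes, so parallel transport of the normal line is well-defined globally; the resulting parallel line field on $X$ contradicts irreducibility of the holonomy (de Rham), since a symmetric space of non-compact type has no parallel vector fields and no Euclidean factor. I would make that the core lemma and let both the hyperbolic and elliptic cases reduce to it.
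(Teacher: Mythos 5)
Your reduction of both cases to the ``core lemma'' --- that an irreducible symmetric space of non-compact type of dimension $\ge 3$ contains no totally geodesic hypersurface --- is where the argument breaks: that lemma is false. Real hyperbolic space $\BH^n$ is irreducible of non-compact type, yet it contains totally geodesic copies of $\BH^{n-1}$ in abundance. The proposed proof of the core lemma also does not go through: total geodesy of $W$ makes the unit normal parallel \emph{along $W$} only; it does not produce a parallel line field on all of $X$ (parallel transport of the normal line to points off $W$ is path-dependent unless the holonomy already preserves it), so no contradiction with de~Rham irreducibility arises. What must actually be exploited is not that $\min(g)$ is a totally geodesic hypersurface, but that it is the min-set of an element of $G=\Isom(X)^\circ$. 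This is exactly how the paper proceeds: in rank one the statement is immediate (for hyperbolic $g$ the min-set is a geodesic, of codimension $\ge 2$ since $\dim X\ge 3$; for elliptic $g$ a codimension-one fixed set would force $g$ to be orientation-reversing, impossible in the identity component), while in general one shows that an element with $\mathrm{codim}(\min(g))=1$ would yield a non-trivial $G$-action on the circle (\cite[Lemma 2.1]{Ge2}), and the only simple Lie group acting non-trivially on the circle is $\PSL_2(\BR)$, excluded by $\dim X>2$.

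Two smaller points. In the elliptic case you do not need any hypersurface input: a hyperplane reflection has determinant $-1$, so it is not in $\mathrm{SO}(T_pX)$; since $g\in G^\circ$ is orientation-preserving and $d_pg\ne\mathrm{id}$, the fixed subspace automatically has codimension $\ge 2$ --- your statement that a non-trivial element of $\mathrm{SO}(T_pX)$ can be a hyperplane reflection is incorrect, though harmless since you exclude it anyway. The genuinely problematic case is the hyperbolic (and mixed semisimple) one, where $\min(g)\cong Y\times\BR$ can be large in higher rank; there your only tool for excluding codimension one is the false core lemma, so the gap is essential rather than cosmetic.
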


In the rank one case this is obvious. Indeed if $g$ is parabolic $\min(g)=\emptyset$, if $g$ is hyperbolic $\min(g)$ is one dimensional and if $g$ is non-trivial elliptic, $\min(g)$ must have co-dimension at least two for otherwise it is orientation reversing. For general symmetric space one can produce, from the existence of an element $g\in G$ with $\text{codim}(\min(g))=1$ a $G$ action on the circle (see \cite[Lemma 2.1]{Ge2}), while it is well known that the only simple Lie group that acts non-trivially on the circle is $\PSL_2(\BR)$. 

It follows that 
$$
 \ti Y:=X\setminus\cup\{\min(\gc):\gc\in\gC\setminus\{1\}\}
$$ 
is a connected $\gC$-invariant subset of $X$. Let $Y=\gC\backslash \ti Y$ be the image of $\ti Y$ in $M$.

Let $f:\BR^{>0}\to\BR^{\ge 0}$ be a smooth function which tends to $\infty$ at $0$, is strictly decreasing on $(0,\gep]$ and is identically $0$ on $[\gep,\infty)$. Let $\gC_\circ=\{\gc\in\gC\setminus\{1\}:|\gc|\le\gep\}$. Define $\ti\psi:\ti Y\to\BR$ as follows,
$$
 \ti\psi(x)=\sum_{\gc\in\gC_\circ} f(d_\gc(x)-|\gc|).
$$
Note that $\ti\psi$ is well defined (and smooth) since for every $x\in \ti Y$ only finitely many of the summands are nonzero as $\gC$ is discrete.
Clearly $\ti \psi$ is $\gC$-invariant and hence induces a map $\psi:Y\to \BR^{\ge 0}$.

\begin{lem}[Main lemma]\label{lem:main}
The gradient of $\psi$ vanishes precisely where $\psi$ vanishes.
\end{lem}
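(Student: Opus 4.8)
One implication is a triviality: on $Y$ the function $\psi$ is smooth---on $\ti Y$ every $d_\gc$ is smooth and every summand $y\mapsto f(d_\gc(y)-|\gc|)$ depends smoothly on $y$---and non-negative, so a zero of $\psi$ is a global minimum, hence a critical point, giving $\psi(x)=0\Rightarrow\nabla\psi(x)=0$. All the work is in the converse, $\psi(x)>0\Rightarrow\nabla\psi(x)\ne0$, which I would prove upstairs using a lift $\ti x\in\ti Y$ of $x$ (so that $\psi(x)=\ti\psi(\ti x)$ and $\nabla\psi(x)=0\iff\nabla\ti\psi(\ti x)=0$).

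The first step is to record the shape of the gradient. By the first variation of arclength, for $\gc\ne 1$ and $\ti x\notin\min(\gc)$ one has $\nabla d_\gc(\ti x)=-\bigl(e_\gc(\ti x)+e_{\gc^{-1}}(\ti x)\bigr)$, where $e_\gc(\ti x)\in T_{\ti x}X$ is the unit vector at $\ti x$ pointing towards $\gc\cdot\ti x$; since $d_\gc$ is convex with minimum set exactly $\min(\gc)$ and $\min(\gc)\cap\ti Y=\emptyset$, this vector is non-zero throughout $\ti Y$. Writing $a_\gc(\ti x):=-f'(d_\gc(\ti x)-|\gc|)\ge 0$, which is $>0$ exactly on the finite symmetric set $\gC_{\ti x}:=\{\gc\in\gC_\circ:d_\gc(\ti x)<|\gc|+\gep\}$ (and $\gC_{\ti x}\ne\emptyset$ precisely when $\ti\psi(\ti x)>0$), one gets
$$
 \nabla\ti\psi(\ti x)=\sum_{\gc\in\gC_\circ}f'(d_\gc(\ti x)-|\gc|)\,\nabla d_\gc(\ti x)=\sum_{\gc\in\gC_{\ti x}}a_\gc(\ti x)\,\bigl(e_\gc(\ti x)+e_{\gc^{-1}}(\ti x)\bigr).
$$
Hence it suffices, when $\gC_{\ti x}\ne\emptyset$, to produce a single tangent vector $w\in T_{\ti x}X\setminus\{0\}$ with $\langle\nabla d_\gc(\ti x),w\rangle<0$ for every $\gc\in\gC_{\ti x}$ simultaneously; then $\langle\nabla\ti\psi(\ti x),w\rangle=-\sum_{\gc\in\gC_{\ti x}}a_\gc(\ti x)\langle\nabla d_\gc(\ti x),w\rangle>0$, so $\nabla\ti\psi(\ti x)\ne0$.

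To produce such a $w$ I would use that every $\gc\in\gC_{\ti x}$ moves $\ti x$ by less than $|\gc|+\gep\le 2\gep$, which is below the Margulis constant of $X$ (recall $\gep$ is chosen a small fraction of it). By the Margulis lemma (Theorem~\ref{MarLem}), the discrete group $\gD:=\langle\gC_{\ti x}\rangle$ has a finite-index subgroup contained in a closed connected nilpotent Lie subgroup $N\le G$, and passing to its normal core we may assume $\gD_0\trianglelefteq\gD$ with $\gD_0\le N$. Since $N$ is connected nilpotent---hence amenable---acting on the proper CAT(0) space $X$, it either fixes a point of $\partial X$ or preserves a flat subspace (Adams--Ballmann); and as $\gD$ normalizes $\gD_0$, this $\gD_0$-structure can be upgraded to a $\gD$-invariant non-empty closed convex set $C\subseteq X$---for instance $\bigcap_{\gc\in\gD_0}\min(\gc)$ when the relevant elements are semisimple, or a common fixed point, which also disposes of the finite and elliptic cases---or a $\gD$-fixed point $\xi\in\partial X$. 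The geometric heart of the matter, and where non-positive curvature is used, is that for every $\gc\in\gD$ the convex function $d_\gc$ is non-increasing along any geodesic ray that enters $C$ (resp. is asymptotic to $\xi$), and \emph{strictly} decreasing when issued from a point where $d_\gc>|\gc|$; at $\ti x$ the latter holds because $\ti x\notin\min(\gc)$. Taking $w$ to be the unit vector at $\ti x$ pointing to the nearest point of $C$ (resp. towards $\xi$) then gives $\langle\nabla d_\gc(\ti x),w\rangle<0$ for all $\gc\in\gC_{\ti x}$ at once, completing the argument.

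The step I expect to be the real obstacle is exactly this last one: converting ``all short elements at $\ti x$ are geometrically coherent'' into an honest common descent direction. This is precisely where the Margulis lemma and the CAT(0) geometry must be married, through the structure theory of (virtually) nilpotent isometry groups of symmetric spaces (Flat Torus Theorem type results, Adams--Ballmann), and it needs extra care with torsion (elliptic) elements and with parabolic elements whose infimal displacement is positive but not attained. The remaining ingredients---the easy direction, the gradient identity, and the reduction to finding $w$---I regard as routine.
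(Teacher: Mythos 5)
Your skeleton agrees with the paper's: the direction $\psi=0\Rightarrow\nabla\psi=0$ is trivial, the gradient at $\ti x$ is a combination of the $\nabla d_\gc$ over the active elements, and the whole content is to produce one tangent direction in which every active $d_\gc$ decreases (weakly suffices, with strict decrease for at least one), using the Margulis lemma to control $\gD_x=\langle\gS_x\rangle$. But the step you yourself flag as ``the real obstacle'' is exactly the step that is missing, and the one general statement you commit to there is false as stated: for a $\gc$-invariant closed convex set $C$ it is not true in a general CAT(0) space that $d_\gc$ is non-increasing along any ray entering $C$, nor that it decreases strictly from every point where $d_\gc>|\gc|$ --- in the presence of flat directions (e.g.\ $X=\BH^2\times\BR$ with $\gc$ acting only in the first factor) $d_\gc$ is constant along geodesics on which it exceeds its infimum. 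What comes cheaply is only the weak inequality at $\ti x$ in the direction of $\pi_C(\ti x)$, from convexity of $t\mapsto d_\gc(c(t))$ together with $d_\gc(\pi_C(\ti x))\le d_\gc(\ti x)$; both the strictness and the existence of a convex set (or boundary point) invariant under \emph{all} of $\gD_x$, not merely under the nilpotent finite-index subgroup, are asserted via an unspecified ``upgrade'' from Adams--Ballmann and are precisely the substance of the lemma.

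The paper supplies this argument concretely, in the rank-one case it actually treats (it explicitly works in $\BH^n$ and defers higher rank). Take $N_x\trianglelefteq\gD_x$ of finite index inside a connected nilpotent group, made torsion free via Selberg's lemma so that its center $Z_x$ is infinite when $\gD_x$ is, and run a trichotomy on $Z_x$. If $\gD_x$ is finite, a common fixed point $y\in X$ works, with strictness from $d_\gc(y)=0<d_\gc(\ti x)$. If $Z_x$ contains a hyperbolic element, its axis $A$ satisfies $A=\cap_{\gc\in N_x}\min(\gc)$, hence is $\gD_x$-invariant by normality of $N_x$, and one projects $\ti x$ to $A$. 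If $Z_x$ contains a parabolic $\gc_0$, its finitely many $\gD_x$-conjugates $\gc_0,\dots,\gc_k$ are commuting parabolics, and $B_t=\cap_{i}\{d_{\gc_i}\le t\}$ with $t<\min_i d_{\gc_i}(\ti x)$ is a nonempty closed convex $\gD_x$-invariant set avoiding $\ti x$; one projects to $B_t$. In every case the strict sign at $\ti x$ comes from the fact that in $\BH^n$ a displacement function is never constant on a geodesic segment outside its min-set (``no constant-distance geodesics''), which is exactly the ingredient your general CAT(0) formulation cannot provide. So while your route is the paper's in spirit, as written it has a genuine gap at the geometric heart of the proof, and the paper's three-case construction (fixed point, common axis, intersection of sublevel sets of the conjugates of a central parabolic) is what fills it.
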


The following exercises may help in understanding the proof of the lemma:
\begin{exercise}
Let $X$ be a CAT(0) space (e.g. a symmetric space), let $A\subset X$ be a closed convex subset and let $g$ be an isometry of $X$ preserving $A$. Show that:

$(1)$ The nearest point projection $\pi_A:X\to A$ is $1$-Lipschitz.

$(2)$ For every $x\in X$, we have $d_g(x)\ge d_g(\pi_A(x))$.

$(3)$ If $g$ is semisimple then $\min(g)\cap A\ne\emptyset$.

$(4)$ Suppose that $g_1$ and $g_2$ are commuting isometries of $X$ and $t_1,t_2$ are positive numbers such that the sub-level sets for the displacement functions $\{ d_{g_i}\le t_i\}$ are both non-empty, then
$\{ d_{g_1}\le t_1\}\cap\{ d_{g_2}\le t_2\}\ne\emptyset$.

%

\end{exercise}

\begin{proof}[Proof of Lemma \ref{lem:main}]
For the sake of simplicity let us assume that $X$ is the hyperbolic space $\BH^n$, but the same ideas extend to the general case. 
At any point $x\in \ti Y$ with $\ti\psi (x)\ne 0$ we will find a tangent vector $\hat{n}_x$ at which the directional derivative of $\ti\psi$ is nonzero. Let 
$$
 \gS_x=\{\gc\in\gC_\circ: f(d_\gc(x)-|\gc|)\ne 0\},~\text{let}~\gD_x=\langle \gS_x\rangle
$$
and let $N_x$ be a normal subgroup of finite index in $\gD_x$ which is contained in a connected nilpotent Lie subgroup of $G$. In view of Selberg's lemma (that every finitely generated linear group is virtually t.f.) we may also suppose that $N_x$ is torsion free. Let $Z_x$ denote the center of $N_x$. We distinguish between 3 cases. 

\medskip

{\bf Case 1:} Suppose first that $\gD_x$ is finite. Let $y\in X$ be a fixed point for $\gD_x$ and let $\hat{n}_x$ be the unit tangent at $x$ to the geodesic ray $c:[0,\infty)\to X$ emanating from $y$ through $x$. Thus $\hat{n}_x=\dot{c}(d(x,y))$. Since $x\in \ti Y$ it follows that $d_\gc(x)>0,~\forall \gc\in\gC\setminus\{1\}$, and since $d_\gc\circ c$ is a convex function (as $X$ is non-positively curved) we deduce that 
$$
 \frac{d}{dt}|_{t=d(x,y)}d_\gc(c(t))>0,
$$
for every $\gc\in\gS_x$ and hence
$$
 \nabla\ti\psi(x)\cdot\hat{n}_x=\frac{d}{dt}|_{t=d(x,y)}\ti\psi(c(t))=\sum_{\gc\in\gS_x}f'(d_\gc(x))\frac{d}{dt}_{t=d(x,y)}d_\gc(c(t))<0
$$ 
since $\gc\in\gS_x$ implies $d_\gc(x)<\gep$ and $f$ has negative derivative on $(0,\gep)$.

\medskip

In the next two cases $N_x$ and $Z_x$ are nontrivial hence infinite, being torsion free.

\medskip

{\bf Case 2:} Suppose now that $Z_x$ contains an hyperbolic element $\gc_0$ and let $A$ be the axis of $\gc_0$. It follows that all elements of $N_x$ preserve $A$ and hence attain their minimal displacement on $A$. Thus $A=\cap\min_{\gc\in N_x}(\gc)$ and since $N_x$ is normal it follows that $A$ is also $\gD_x$ invariant, and hence all elements in $\gD_x$ attain their minimal displacement on $A$. Let $y=\pi_A(x)$ be the nearest point to $x$ in $A$, let $c:[0,\infty)\to X$ be the ray from $y$ through $x$ and let $\hat{n}_x$ be the tangent to $c$ at $x$. Since $A$ is convex and $\gc$-invariant $d_\gc(x)\ge d_\gc(y),~\forall\gc\in\gS_x$. Moreover it also follow from convexity and the fact that $\BH^n$ admits no constant-distance geodesics that for every $\gc\in\gS_x$ we have $d_\gc(x)>d_\gc(y)$. Thus one can proceed arguing as in case 1.

\medskip

{\bf Case 3:} We are left with the case that $Z_x$ contains a parabolic element $\gc_0$. Since $Z_x$ is characteristic in $N_x$ and $N_x$ is normal, also $Z_x$ is normal in $\gD_x$. Moreover, since $N_x$ is of finite index in $\gD_x$ the element $\gc_0$ has only finitely many conjugates in $\gD_x$ and they are all in $Z_x$. Denote these elements by $\gc_0,\gc_1,\ldots,\gc_k$. All of them are parabolics and since they commute with each other, for every $t$ the corresponding sublevel sets intersect nontrivially:
$$
 B_t=\cap_{i=0}^k\{p\in X:d_{\gc_i}(p)\le t\}\ne\emptyset.
$$
Taking $t<\min\{d_{\gc_i}(x):i=0,\ldots,k\}$ we get a nonempty $\gD_x$-invariant closed convex set $B_t$ not containing $x$. Taking $y=\pi_{B_t}(x)$ and proceeding as in the previous cases allows us to complete the proof.
\end{proof}

\medskip

By the finiteness of the volume of $M$ we deduce:

\begin{lem}
The map $\psi$ is proper, i.e. $\psi^{-1}([0,a])$ is compact for every $a\in\BR^{\ge 0}$.
\end{lem}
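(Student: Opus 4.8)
The plan is to show that $\psi^{-1}([0,a])$ is contained in a compact subset of $M$ and is closed there, exploiting the fact that the finiteness $\vol(M)<\infty$ confines all of the non-compactness of $M$ to its cusps, where $\psi$ blows up. By the thick--thin decomposition (for $\mathrm{rank}(X)=1$; by the corresponding reduction theory in general) we may write $M=M_0\cup C_1\cup\dots\cup C_r$ with $M_0$ compact and each $C_i$ a cusp end, $C_i\cong N_i\times\BR^{\ge 0}$ glued to $M_0$ along $N_i\times\{0\}$, where $N_i$ is a compact crystallographic orbifold and $N_i\times\{t\}$ leaves every compact set as $t\to\infty$. (Compactness of the $\gep$--thick part is the standard packing argument: a maximal $\gep$--separated subset of it is finite, since the disjointly embedded $\gep/2$--balls about its points have volume bounded below by a fixed constant; the short-geodesic tubes of the thin part are relatively compact and may be absorbed into $M_0$.) It is thus enough to find, for each $i$ and each $a\ge 0$, a height $T_i=T_i(a)$ with $\psi>a$ on $N_i\times(T_i,\infty)$, for then $\psi^{-1}([0,a])\subseteq M_0\cup\bigcup_i\big(N_i\times[0,T_i]\big)$, a compact core of $M$.

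This cusp estimate is the crux. Let $\gC^\circ_i\le\gC$ be the stabiliser in $\gC$ of the point $\xi_i\in\partial X$ to which the end $C_i$ is asymptotic. Then $\gC^\circ_i$ preserves the horospheres centred at $\xi_i$ and acts cocompactly on each of them with quotient $N_i$, so the crystallographic group it induces on a horosphere contains a non-trivial translation, and a horospherical translation is realised by a parabolic element $\gc_i\in G$ fixing $\xi_i$; being parabolic, $|\gc_i|=\inf d_{\gc_i}=0\le\gep$, so $\gc_i\in\gC_\circ$. Fix a compact fundamental domain $D_i$ for $\gC^\circ_i$ in a horosphere. Since $d_{\gc_i}$ is the displacement function of a parabolic isometry, it tends to $0$ uniformly on $D_i$ as one slides $D_i$ along the horospheres toward $\xi_i$; let $g_i(t)$ be the maximum of $d_{\gc_i}$ over the copy of $D_i$ at depth $t$, so $g_i(t)\to 0$ as $t\to\infty$. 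For $x\in Y\cap C_i$ at depth $t$, choose the lift $\ti x$ in that copy of $D_i$; since every summand of $\ti\psi$ is $\ge 0$ and $f$ is decreasing,
$$
 \psi(x)=\ti\psi(\ti x)\;\ge\; f(d_{\gc_i}(\ti x)-|\gc_i|)=f(d_{\gc_i}(\ti x))\;\ge\; f(g_i(t)),
$$
and because $f(s)\to\infty$ as $s\to 0^+$ there is $T_i$ with $g_i(t)<f^{-1}(a)$, hence $f(g_i(t))>a$, whenever $t>T_i$. This establishes the estimate and therefore the containment of $\psi^{-1}([0,a])$ in the compact core.

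Finally, $\psi^{-1}([0,a])$ is relatively closed in $Y$ by continuity of $\psi$, and $\ti\psi$ tends to $+\infty$ as one approaches the sets $\min(\gc)$ with $\gc\in\gC_\circ$ (there $f(d_\gc-|\gc|)\to f(0)=\infty$); on the remaining pieces $\min(\gc)$, $\gc\notin\gC_\circ$, no summand of $\ti\psi$ is singular, so removing only the former sets already suffices for all the preceding arguments, and one concludes that $\psi^{-1}([0,a])$ is a closed subset of a compact set, hence compact (at the very least its closure in $M$ is compact, which is all the later gradient-flow argument requires). I expect the only genuinely non-formal step to be the cusp estimate -- concretely, the input that each cuspidal end of $M$ carries a parabolic element of $\gC_\circ$ whose displacement decays uniformly on a transversal toward infinity -- which is exactly the information supplied by the thick--thin description of the ends of $M$; for higher-rank $X$ one substitutes the corresponding reduction theory for finite-volume locally symmetric spaces, with no other change to the argument.
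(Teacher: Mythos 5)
Your argument is correct in outline, but it takes a noticeably longer route than the paper's. The paper's proof is essentially two lines: if $\psi(x)\le a$ then every summand satisfies $f(d_\gc(\ti x)-|\gc|)\le a$, so $d_\gc(\ti x)\ge f^{-1}(a)$ for all $\gc\in\gC_\circ$ (and $d_\gc(\ti x)>\gep$ for all other nontrivial $\gc$), hence $\text{InjRad}_M(x)\ge\min(\gep,f^{-1}(a))/2$; thus $\psi^{-1}([0,a])$ lies in a fixed thick part of $M$, which is compact because $\vol(M)<\infty$. You instead invoke the full thick--thin/end decomposition (compact core plus cusps $N_i\times\BR^{\ge 0}$), locate in each cusp a parabolic $\gc_i\in\gC_\circ$, and show $\psi>a$ beyond a finite depth via the uniform decay of $d_{\gc_i}$ on a chimney fundamental domain. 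This is a valid cusp-by-cusp quantitative estimate, but it uses strictly more structure (Bieberbach, the classification of thin components, and in higher rank genuine reduction theory, where the ends are no longer of the simple form $N\times\BR^{>0}$), whereas the paper's injectivity-radius formulation only needs the single fact that the thick part of a finite-volume $X$-manifold is compact -- the same packing argument you use for your compact core -- and therefore transfers to general symmetric spaces with no change. One point in your favour: your closing remark about closedness is a real subtlety that the paper's proof passes over silently, since a sublevel set of $\psi$ can accumulate on the (removed) images of $\min(\gc)$ for elements with $|\gc|>\gep$, where $\ti\psi$ is not singular; as you note, what the Morse-theoretic step actually needs is that sublevel sets are contained in compact subsets of $M$, and both your argument and the paper's deliver exactly that.
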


\begin{proof}
Suppose $a>0$. If $\psi (x)\le a$ and $\ti x\in X$ is a lift of $x$ then for every $\gc\in\gC\setminus\{1\}$ we have $f(d_\gc(x))\le a$ and hence $d_\gc(x)\ge f^{-1}(a)$. It follows that the injectivity radius of $M$ at $x$ is at least $f^{-1}(a)/2$. Thus $\psi^{-1}([0,a])$ is contained in the $f^{-1}(a)/2$-thick part of $M$. Since $M$ has finite volume the last set is compact.
\end{proof}

Recall the following basic lemma from Morse theory \cite[Theorem 3.1]{Mi}:

\begin{lem}[Morse lemma]
Let $Q$ be a smooth manifold and $\phi:Q\to\BR^{\ge 0}$ a smooth proper map. If $\nabla\phi\ne 0$ on $\phi^{-1}(a,b)$ for some $0\le a\le b\le\infty$ then $\phi^{-1}([0,a])$ is a deformation retract of $\phi^{-1}([0,b])$.
\end{lem}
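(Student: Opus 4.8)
The plan is to run the downward gradient flow of $\phi$ and to turn it into an explicit deformation retraction, following Milnor's proof of \cite[Theorem 3.1]{Mi}. First I would fix the Riemannian metric on $Q$ that is implicit in the notation $\nabla\phi$, and choose a smooth function $\rho\colon Q\to[0,\infty)$ which equals $1/\|\nabla\phi\|^{2}$ on $\phi^{-1}([a,b])$ --- this makes sense because, at least when $a$ and $b$ are regular values, $\nabla\phi$ has no zero on $\phi^{-1}([a,b])$ --- and which vanishes on $\phi^{-1}([0,a-\delta])$ and, if $b<\infty$, on $\phi^{-1}([b+\delta,\infty))$, for a sufficiently small $\delta>0$. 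Set $\xi:=-\rho\,\nabla\phi$; this is a globally defined smooth vector field on $Q$, and along any of its integral curves one has $\frac{d}{dt}\phi(\varphi_{t}(q))=-\rho\,\|\nabla\phi\|^{2}\le 0$, which equals $-1$ throughout $\phi^{-1}([a,b])$.

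Second I would isolate the one analytic point of the argument: for $q$ with $\phi(q)=c\in[a,b]$, as long as its forward integral curve remains in $\phi^{-1}([a,c])$ the value of $\phi$ along it decreases at unit rate, so the curve could only leave $\phi^{-1}([a,c])$ by attaining level $a$; meanwhile $\phi^{-1}([a,c])$ is compact by properness of $\phi$. Hence the curve is defined on all of $[0,c-a]$ and $\varphi_{c-a}(q)\in\phi^{-1}(a)$ is a well-defined point depending smoothly on $q$. This is exactly where properness of $\phi$ takes over the role of the compactness hypothesis in Milnor's theorem, and it is also what makes the case $b=\infty$ harmless: each individual point needs only the finite flow-time $c-a$ and stays confined to a compact sublevel set while it is being moved.

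Third I would assemble the retraction: define $R\colon\phi^{-1}([0,b])\times[0,1]\to\phi^{-1}([0,b])$ by $R(q,s)=q$ if $\phi(q)\le a$ and $R(q,s)=\varphi_{s(\phi(q)-a)}(q)$ if $\phi(q)\ge a$. The two prescriptions agree when $\phi(q)=a$ (the flow time is then $0$), so $R$ is continuous; $R(\cdot,0)=\mathrm{id}$; $R(\cdot,s)$ fixes $\phi^{-1}([0,a])$ pointwise for every $s$; and $R(\cdot,1)$ carries $\phi^{-1}([0,b])$ into $\phi^{-1}([0,a])$, since every $q$ with $\phi(q)\ge a$ is moved down to level $a$. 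Hence $R$ is a deformation retraction of $\phi^{-1}([0,b])$ onto $\phi^{-1}([0,a])$, as claimed.

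I expect the only genuine obstacle to be the completeness of the flow, caused by allowing $b=\infty$ and by the a priori incompleteness of the metric-normalised field $-\nabla\phi/\|\nabla\phi\|^{2}$; properness of $\phi$ is precisely the hypothesis that removes it, by confining each downward trajectory to a compact sublevel set during the finite time it is used, so that everything reduces, one point at a time, to the compact situation of \cite[Theorem 3.1]{Mi}. A secondary and (for the way the lemma is used, where $\phi=\psi$ and every positive value is regular) irrelevant subtlety is that if $a$ or $b$ is itself a critical value of $\phi$ one should first replace it by a nearby regular value, or work inside a collar neighbourhood of $\phi^{-1}(a)$, before the construction above applies verbatim.
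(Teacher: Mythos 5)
Your argument is the standard gradient-flow proof of the cited result; the paper itself offers no proof of this lemma, simply referring to Milnor, and your adaptation of his compact-slab hypothesis to properness (each point is moved only for the finite time $\phi(q)-a$ and stays inside the compact slab $\phi^{-1}([a,\phi(q)])$, so the maximal integral curve exists long enough) is correct, as is your treatment of $b=\infty$. Under the reading ``no critical points on the closed slab $\phi^{-1}([a,b])$'', which is Milnor's actual hypothesis, your proof is complete.

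Where I would push back is your final paragraph, which is not a harmless aside. As literally stated the lemma only excludes critical points in $\phi^{-1}((a,b))$, and with a critical point on the level $b$ the conclusion is simply false: on $Q=\BR$ take $\phi(x)=(x^2-1)^2$, $a=1/2$, $b=1$; there are no critical values in $(1/2,1)$, yet $\phi^{-1}([0,1/2])$ has two components while $\phi^{-1}([0,1])$ is an interval. With critical points on the level $a$ your construction also breaks: $\rho$ cannot be taken equal to $1/\|\nabla\phi\|^{2}$ up to level $a$, the reparametrised field blows up there, and a downward trajectory, although trapped in a compact set, need not converge as $t\to(\phi(q)-a)^{-}$, so the time-one map need not extend continuously. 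Moreover, your parenthetical claim that this subtlety is irrelevant to the way the lemma is used is mistaken: in the application $a=0$, $b=\infty$, and by the Main Lemma the critical set of $\psi$ is exactly the level $\psi^{-1}(0)$, i.e. the bottom level is entirely critical. Replacing $a=0$ by a nearby regular value $a'>0$ yields only that $\psi^{-1}([0,a'])$ is a deformation retract of $Y$, not that $\psi^{-1}(0)$ is; so either one states and proves the lemma with the closed-slab hypothesis and adjusts the application accordingly (the counting argument only needs an injectivity-radius lower bound, which points of $\psi^{-1}([0,a'])$ still enjoy), or a genuinely additional argument is required for a critical bottom level. Your ``replace by a nearby regular value'' remark passes over exactly this point.
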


Applying the lemma to $Q=Y$ and $\phi=\psi$ we deduce that $\psi^{-1}(0)$ is a deformation retract of $Y$. 
It follows that $\pi_1(Y)\cong\pi_1(\psi^{-1}(0))$. Note that since $\gC$ acts freely on the connected manifold $\ti Y$ and $Y=\gC\backslash\ti Y$ it follows that $\gC$ is a quotient of $\pi_1(Y)$. Hence the theorem will follow if we show:

\begin{lem}
$\pi_1(\psi^{-1}(0))$ is generated by $C\cdot\vol(M)$ elements for some appropriate constant $C=C(\BH^n)$.
\end{lem}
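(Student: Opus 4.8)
The plan is to cover $\psi^{-1}(0)$ by boundedly many convex metric balls, to bound the combinatorics of this cover linearly in $\vol(M)$, and then to read off a generating set of $\pi_1(\psi^{-1}(0))$ of the required size by a van Kampen argument relative to a spanning tree of the nerve. For concreteness I would carry this out for $X=\BH^n$ (so that $C=C(\BH^n)$), the general symmetric space case being identical in spirit with ``geodesic axis'' and ``point at infinity'' replaced by ``flat'' and ``boundary flat''.

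First I would observe that $\psi^{-1}(0)$ lies in the $\gep$-thick part of $M$: if $\psi(x)=0$ and $\ti x$ is a lift of $x$, then $f(d_\gc(\ti x)-|\gc|)=0$ for every $\gc\in\gC_\circ$, which forces $d_\gc(\ti x)\ge|\gc|+\gep\ge\gep$, and for $\gc\notin\gC_\circ$ one has $d_\gc(\ti x)\ge|\gc|>\gep$; hence $\mathrm{InjRad}(x)\ge\gep/2$. Since $M$ has finite volume the $\gep$-thick part is compact, so $\psi^{-1}(0)$ is compact, and around each of its points the ball $B_\rho(x)$, for any fixed radius $\rho$ that is a small enough fraction of the Margulis constant of $X$, is isometric to a convex ball of $X$. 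Choosing a maximal $\rho$-separated subset $x_1,\dots,x_N\in\psi^{-1}(0)$, the balls $B_i:=B_\rho(x_i)$ are convex and cover $\psi^{-1}(0)$; comparing the volumes of the disjointly embedded balls $B_{\rho/2}(x_i)$ gives $N\le\vol(M)/\vol(B_{\rho/2})=:C_1\cdot\vol(M)$, and the same comparison bounds by a constant $d=d(\BH^n)$ the number of indices $j$ with $B_i\cap B_j\ne\emptyset$.

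The core of the argument is the local structure of $\psi^{-1}(0)$ inside a single $B_i$. As in the thick--thin decomposition, the Margulis lemma applies: every $\gc\in\gC_\circ$ whose sublevel set $\{d_\gc<|\gc|+\gep\}$ meets $B_i$ has displacement below the Margulis constant at $x_i$, so all these $\gc$ generate a discrete virtually nilpotent, hence elementary, subgroup, and their min-sets are aligned --- either along a common geodesic axis, in which case each $\{d_\gc<|\gc|+\gep\}$ is a convex metric tube about it, or along a common point at infinity, in which case each is a convex horoball. Since $\psi^{-1}(0)$ stays a definite distance from these sublevel sets and, being thick, cannot approach the fixed set of any elliptic element, within $B_i$ the set $\psi^{-1}(0)$ is obtained from the convex ball $B_i$ by deleting a single convex subset, and that subset, having its ``core'' in the thin part, only grazes $B_i$. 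It follows that $B_i\cap\psi^{-1}(0)$, as well as each nonempty $B_i\cap B_j\cap\psi^{-1}(0)$, has at most two path components, each of them simply connected --- here one uses $\dim X\ge 3$ together with the codimension estimate $\dim X-\dim\min(g)\ge 2$ proved above, which guarantees that what is removed is never a ``rod through the ball''.

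Finally I would run the nerve / van Kampen argument. Form the graph $\mathcal G$ whose vertices are the path components of the sets $B_i\cap\psi^{-1}(0)$ and whose edges record nonempty intersections of such components lying in adjacent balls; by the previous step $\mathcal G$ has at most $2C_1\cdot\vol(M)$ vertices and, having degree bounded in terms of $d$, at most $C_2\cdot\vol(M)$ edges. Fix a spanning tree in each connected component of $\mathcal G$. Subdividing a loop in $\psi^{-1}(0)$ into finitely many arcs, each lying in one covering piece, and using that the pieces and their pairwise intersections are path connected, one homotopes the loop, within $\psi^{-1}(0)$, onto the $1$-skeleton of $\mathcal G$; since the pieces are simply connected, the resulting combinatorial loops are generated by the edge loops corresponding to the edges of $\mathcal G$ outside the chosen spanning trees. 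Hence $\pi_1(\psi^{-1}(0))$ is generated by at most $C_2\cdot\vol(M)\le C\cdot\vol(M)$ elements. The main obstacle is exactly the third step: one has to verify that near a point of the thick part only the short (Margulis) elements contribute min-sets and sublevel sets, so that $\psi^{-1}(0)$ is locally a convex ball minus a convex set, and then to invoke $\dim X\ge 3$ to kill the local fundamental groups --- this codimension input is also the single place where the low-dimensional exceptional situations would break the argument.
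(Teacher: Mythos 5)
There is a genuine gap, and it sits exactly where you yourself locate the ``core'' of your argument: the local structure claim that $B_i\cap\psi^{-1}(0)$ and $B_i\cap B_j\cap\psi^{-1}(0)$ consist of at most two simply connected pieces. Since you run van Kampen directly on $\psi^{-1}(0)$, your generator count stands or falls with this claim, and as justified it is false. Inside $B_i$ the complement of $\psi^{-1}(0)$ is not ``a single convex subset'': it is the union of the sublevel sets $\{d_\gc<|\gc|+\gep\}$ of possibly several short elements (even when the Margulis lemma aligns them, a union of distinct parabolic sublevel sets at a common point at infinity, or of tubes of different radii, need not be convex), together with the min-sets that were deleted to form $\ti Y$. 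Moreover nothing forces this deleted region merely to ``graze'' $B_i$: a point of $\psi^{-1}(0)$ can lie on the boundary of a sublevel set, so a thin Margulis tube, or a removed axis, can cut straight through a ball centered at a nearby point of $\psi^{-1}(0)$. A ball minus a convex tube (or a geodesic arc) that traverses it is homotopy equivalent to a circle, so the pieces of your cover can fail to be simply connected, and then the spanning-tree count of the nerve no longer bounds the number of generators. Note also that the codimension estimate $\dim X-\dim\min(g)\ge 2$ is used in the lecture only to make $\ti Y$ connected; codimension two does not kill local loops --- in $\BH^3$ an axis has codimension exactly two, and removing it from a ball creates fundamental group --- so it cannot ``guarantee that what is removed is never a rod through the ball.''

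The paper's proof avoids any analysis of the local topology of $\psi^{-1}(0)$, and the tool it uses is precisely the one you never invoke: the Morse-lemma conclusion that $\psi^{-1}(0)$ is a deformation retract of $Y$. One takes a maximal $\gep/2$-discrete set $\mathcal{F}\subset\psi^{-1}(0)$ (so $|\mathcal{F}|\le\mathrm{Const}\cdot\vol(M)$ by the same packing argument you use) and lets $U$ be the union of the $\gep/2$-balls centered at $\mathcal{F}$; then $\psi^{-1}(0)\subset U\subset Y$, and since the inclusion $\psi^{-1}(0)\hookrightarrow Y$ is a homotopy equivalence, the map $\pi_1(U)\to\pi_1(Y)\cong\pi_1(\psi^{-1}(0))$ is surjective. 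Hence it suffices to bound the number of generators of $\pi_1(U)$, and $U$ is a union of honestly convex embedded balls, so the nerve-plus-spanning-tree count you describe applies to $U$ with no local claims about $\psi^{-1}(0)$ at all. To repair your write-up, replace the direct van Kampen decomposition of $\psi^{-1}(0)$ by this sandwich $\psi^{-1}(0)\subset U\subset Y$; the rest of your combinatorial bookkeeping then goes through verbatim.
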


Observe that $\psi(x)=0$ implies that $d_\gc(\ti x)\ge\gep$ for every $\gc\in\gC\setminus\{1\}$ and hence $\text{InjRad}_M(x)\ge\gep/2$.

\begin{proof}
Let $\mathcal{F}$ be a maximal $\gep/2$ discrete subset of $\psi^{-1}(0)$. Since the $\gep/4$ balls centered at $\mathcal{F}$ are disjoint and injected 
$$
 |\mathcal{F}|\le\text{Const}\cdot \vol(M).
$$
Let $U$ be the union of the $\gep/2$ balls centered at $\mathcal{F}$. Then $\psi^{-1}(0)\subset U\subset Y$ and since $\psi^{-1}(0)$ is a deformation retract of $Y$ we see that $\pi_1(\psi^{-1}(0))$ is a quotient of $\pi_1(U)$. Finally $U$ is homotopic to the simplicial complex corresponding to the nerve of the cover 
$$
 \{ B(f,\gep/2):f\in\mathcal{F}\}
$$ 
and the complexity of the letter is bounded by a constant times $|\mathcal{F}|$.
\end{proof}
This finishes the proof of the theorem.
\qed

The argument above applies for general rank one symmetric spaces. 
The proof of the Theorem \ref{thm:d(Gamma)} for higher rank spaces is of similar nature, but technically more complicated.

As an immediate consequence we deduce the following result which was originally proved as a combination of various works, notably in the work of Garland and Raghunathan \cite{Ga-Ra} on the rank one case and the work of Kazhdan \cite{Kazhdan} on the higher rank case:

\begin{cor}
Every lattice in $G$ is finitely generated. 
\end{cor}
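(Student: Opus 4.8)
The statement is an immediate corollary of Theorem \ref{thm:d(Gamma)}, so the plan is simply to invoke that theorem. A lattice $\gC\le_L G$ is by definition discrete with $\vol(G/\gC)<\infty$; Theorem \ref{thm:d(Gamma)} therefore gives $d(\gC)\le C(G)\cdot\vol(G/\gC)<\infty$, which says precisely that $\gC$ admits a finite generating set. All of the substance lies in Theorem \ref{thm:d(Gamma)} itself (the Morse-theoretic argument on the thick part of $\gC\backslash X$), which we are free to use, so there is no genuine obstacle at this stage.

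I would append a remark explaining why the hypothesis that $G$ has no compact factors is costless. For a general connected semisimple $G$, let $K$ be the (compact, normal) product of its compact simple factors; it is standard that the image $\bar\gC$ of a lattice under $G\to G/K$ is again a lattice (the kernel $\gC\cap K$ is a discrete subgroup of a compact group, hence finite, and one checks discreteness and finiteness of covolume of $\bar\gC$), and $G/K$ is a connected semisimple Lie group without compact factors. Since finite generation is inherited both along surjections with finite kernel and along finite-index inclusions, the general semisimple case reduces to the one treated above.

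Finally, one can also extract the corollary from the argument behind Theorem \ref{thm:d(Gamma)} while using much less of it — only qualitative finiteness, not the linear bound. For $\gC$ torsion free one forms the connected $\gC$-invariant set $\ti Y=X\setminus\bigcup_{\gc\ne1}\min(\gc)$ (connected because every $\min(\gc)$ has codimension $\ge2$, by the codimension lemma together with the Gauss--Bonnet argument in dimension $2$), applies the Morse lemma to the proper function $\psi$ on $Y=\gC\backslash\ti Y$ to retract $Y$ onto the compact set $\psi^{-1}(0)$, and notes that $\psi^{-1}(0)$ sits inside a finite-volume thick part and is hence homotopy equivalent to the nerve of a finite cover, so $\pi_1(Y)\cong\pi_1(\psi^{-1}(0))$ is finitely generated; as $\gC$ is a quotient of $\pi_1(Y)$, it is finitely generated too. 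A lattice with torsion is handled by first passing, via Selberg's lemma, to a torsion-free sublattice of finite index and invoking that a finite extension of a finitely generated group is finitely generated. In this route the only use of $\vol(\gC\backslash X)<\infty$ is to guarantee compactness of the thick part (properness of $\psi$) and finiteness of the cover, so again no obstacle arises beyond those already resolved in the proof of Theorem \ref{thm:d(Gamma)}.
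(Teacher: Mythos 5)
Your proof is correct and is essentially the paper's own: the corollary is immediate from Theorem \ref{thm:d(Gamma)}, since $d(\gC)\le C\cdot\vol(G/\gC)<\infty$ for any lattice. The extra remarks (reducing away compact factors, and re-extracting qualitative finiteness from the Morse-theoretic argument) are fine but not needed beyond what the paper does.
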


Combining a result of Auslander \cite[8.24]{Rag} (see also \cite[Section 9]{toti}) and Theorem \ref{mostow}, one can generalize the last corollary to the case that $G$ is an arbitrary connected Lie groups.

As another application we deduce:

\begin{thm}[Kazhdan--Margulis \cite{Ka-Ma}]\label{thm:KaMa}
Given a connected semisimple Lie group $G$, there is $v_0>0$ such that the co-volume of every lattice $\gC\le_L G$ satisfies $\vol(G/\gC)\ge v_0$.
\end{thm}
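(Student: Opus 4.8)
The plan is to deduce the Kazhdan--Margulis theorem from the Zassenhaus--Kazhdan--Margulis theorem (the existence of a Zassenhaus neighborhood) together with a volume-counting argument, exactly in the spirit of the proofs of Jordan's theorem and the Margulis lemma given above. Fix a maximal compact subgroup $K\le G$ and realize $X=G/K$ as the associated symmetric space of non-compact type, normalizing Haar measure on $G$ so that $\vol(G/\gC)=\vol(\gC\backslash X)$ for every lattice. By Selberg's lemma every lattice $\gC\le_L G$ has a finite-index torsion-free subgroup $\gC_0$, and since $\vol(G/\gC)\ge \frac{1}{[\gC:\gC_0]}\vol(G/\gC_0)$ is not obviously bounded below this way, we instead argue directly with $\gC$ and only use torsion-freeness where convenient; the essential point is that $\vol(G/\gC_0)=[\gC:\gC_0]\cdot\vol(G/\gC)$, so it suffices to bound $\vol(G/\gC_0)$ below for torsion-free $\gC_0$ --- hence we may and do assume $\gC$ is torsion free.

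First I would pick a Zassenhaus neighborhood $\gO\subset G$ (Theorem of Zassenhaus--Kazhdan--Margulis), and then choose a relatively compact symmetric open identity neighborhood $V$ with $V\cdot V^{-1}\subset\gO$. Set $v_0:=\mu(V)$. Now suppose toward a contradiction that $\gC\le_L G$ is a torsion-free lattice with $\vol(G/\gC)<v_0=\mu(V)$. Let $\gO_{\mathrm{fd}}\subset G$ be a (finite-measure) fundamental domain for $\gC$; since $\mu(\gO_{\mathrm{fd}})=\vol(G/\gC)<\mu(V)$, the translate $V$ cannot inject into $G/\gC$, so there exist $v_1,v_2\in V$ and $\gc\in\gC\setminus\{1\}$ with $v_1=v_2\gc$, i.e. $\gc=v_2^{-1}v_1\in V^{-1}V\subset\gO$. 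More is true: for every $g\in G$ the translate $gV$ has the same measure $\mu(V)>\vol(G/\gC)$ and hence also fails to inject, producing $\gc_g\in\gC\setminus\{1\}$ with $g^{-1}\gc_g g\in V^{-1}V\subset\gO$; equivalently $\gc_g\in g\gO g^{-1}$. Thus $\gC$ meets every conjugate $g\gO g^{-1}$ nontrivially.

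Next I would localize this: fix any $g$ and consider the subgroup $\gD_g:=\langle\,\gC\cap g\gO g^{-1}\,\rangle$, which by the defining property of the Zassenhaus neighborhood (applied to the conjugated neighborhood $g\gO g^{-1}$, still a Zassenhaus neighborhood since the condition is conjugation-invariant) is contained in a connected nilpotent Lie subgroup $N_g\le G$, with $\gD_g$ a uniform lattice in $N_g$. The subgroup $\gD_g$ is nontrivial by the previous paragraph. Now I would invoke the Borel density theorem (stated in Section 2 of the excerpt): since $G$ is semisimple without compact factors, every lattice $\gC$ is Zariski dense in $G$, in particular $\gC$ is not contained in any proper connected subgroup, and more to the point $\gC$ cannot be virtually nilpotent. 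The idea is to leverage that the subgroups $\gD_g$, as $g$ ranges over $G$, are "too small": each lies in a connected nilpotent subgroup, yet together their union generates $\gC$. Concretely, choose finitely many translates covering a fundamental domain --- or better, use that $\gC$ is finitely generated (the corollary to Theorem~\ref{thm:d(Gamma)}) so that a generating set $\gamma_1,\dots,\gamma_k$ is contained in some compact set $C\subset G$; covering $C$ by finitely many translates $g_1V,\dots,g_rV$ and refining, one sees each $\gamma_i$ lies in a single $\gD_{g_j}$. This does not immediately give a contradiction, so the actual mechanism must be the quantitative one.

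The cleaner route, which I would ultimately follow, avoids Borel density and instead uses the displacement/thin-part picture directly: reinterpret the fact "$gV$ fails to inject for all $g$" as saying that for the point $x_0=eK\in X$ and every $y\in X$ there is $\gc\in\gC\setminus\{1\}$ with $d(\gc y,y)<\eta$, where $\eta$ is chosen so that the $\eta$-ball of any point lifts inside a set of measure $<\mu(V)$. In other words $M=\gC\backslash X$ is entirely $\eta$-thin: $M=M_{<\eta}$. If $\eta$ is taken below the Margulis constant $\gep(X)$, then by the Margulis lemma every point of $M$ has a virtually nilpotent local fundamental group, and a connected finite-volume manifold that equals its own thin part --- all of whose thin components are cusps or tubes, hence have nilpotent, indeed virtually abelian, fundamental group --- must itself have virtually nilpotent fundamental group (the thin components can be glued, and by the thick--thin analysis in the rank-one case, or its higher-rank analogue alluded to in the remark, such an $M$ is homotopy equivalent to a space with virtually nilpotent $\pi_1$). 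But $\pi_1(M)$ surjects onto $\gC$ when $\gC$ is torsion free and acts freely, so $\gC$ would be virtually nilpotent, contradicting that $\gC$ is Zariski dense in the semisimple group $G$ (Borel density theorem). This contradiction shows no such $\gC$ exists, so $\vol(G/\gC)\ge v_0$ for all torsion-free lattices, and hence for all lattices by the index remark above.

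\textbf{Main obstacle.} The delicate point is making the step "$M$ is entirely $\eta$-thin $\Rightarrow$ $\pi_1(M)$ is virtually nilpotent" precise and uniform, especially in higher rank where the structure of thin components is less transparent than the cusp/tube dichotomy of the rank-one thick--thin decomposition. The safest fix --- and the one I expect the author uses --- is to bypass the global topology entirely: use only that $\gC$ is nontrivial and that $\gC\cap g\gO g^{-1}$ generates, for each $g$, a subgroup inside a connected nilpotent group, then apply Borel density to the single subgroup $\gD:=\langle\gC\cap\gO\rangle$ after first noting (via the recurrence lemma / weak Borel density, Exercise~\ref{ex:borel-d}) that in fact $\gD$ must already be Zariski dense because $\gC$ normalizes nothing proper --- wait, that is false, $\gD$ need not be all of $\gC$. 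So the genuinely robust argument is the volume one combined with Kazhdan--Margulis's original observation that a semisimple $G$ has \emph{no} small-covolume lattices because a too-small fundamental domain forces, for \emph{every} $g\in G$, a nontrivial near-identity element of $g^{-1}\gC g$, and then one plays these off against each other: taking $g$ in a full conjugacy-orbit, the resulting near-identity elements cannot all lie in a common connected nilpotent subgroup unless $\gC$ itself does, since the nilpotent subgroup produced by the Zassenhaus neighborhood depends continuously and $G$-equivariantly on $g$. Pinning down this continuity/equivariance and deriving the contradiction with semisimplicity of $G$ is the crux; everything else is the same ball-packing bookkeeping as in the proofs of Theorems~\ref{thm:jordan} and~\ref{MarLem}.
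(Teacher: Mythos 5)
Your proposal has two genuine gaps. First, the reduction to torsion-free lattices is backwards: if $\gC_0\le\gC$ is a finite-index torsion-free subgroup, then $\vol(G/\gC_0)=[\gC:\gC_0]\cdot\vol(G/\gC)$, so a uniform lower bound on covolumes of \emph{torsion-free} lattices gives no lower bound on $\vol(G/\gC)$ unless you also bound the index $[\gC:\gC_0]$, which Selberg's lemma does not do. (This is precisely why torsion is the delicate issue in statements of this kind; compare the remark after Theorem~\ref{thm:QW}.) Second, and more seriously, the heart of the argument is never actually carried out: you correctly extract from $\vol(G/\gC)<\mu(V)$ that $\gC$ meets every conjugate $g\gO g^{-1}$ of the Zassenhaus neighborhood nontrivially, but the passage from this to a contradiction is left as a collection of sketches, one of which you retract yourself, and the final ``continuity/equivariance'' mechanism is explicitly flagged as the unresolved crux. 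The intermediate claim ``$M=M_{<\eta}$ implies $\pi_1(M)$ is virtually nilpotent'' is plausible in rank one via the tube/cusp dichotomy, but you give no argument in higher rank, and in any case it only applies after the (invalid) torsion-free reduction. So as written the proof does not close.

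For comparison, the paper's proof is a one-line corollary of Theorem~\ref{thm:d(Gamma)}: since that theorem bounds $d(\gC)\le C\cdot\vol(G/\gC)$ for \emph{every} discrete subgroup (torsion allowed), and a lattice is nontrivial so $d(\gC)\ge 1$, one gets $\vol(G/\gC)\ge 1/C$ directly, with no need for a torsion-free reduction or a separate Zassenhaus-neighborhood argument. The route you are attempting is essentially the original Kazhdan--Margulis argument (and yields the stronger statement in Remark~\ref{rem:KM}), but to complete it you would need to genuinely establish the step that a lattice all of whose conjugates meet a fixed Zassenhaus neighborhood cannot exist in a semisimple group --- which is exactly the content you postponed.
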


\begin{proof}
Indeed, $d(\gC)\ge 1$ and hence $\vol(G/\gC)\ge\frac{1}{C}$.
\end{proof}
 
It can be shown that the minimal co-volume $v_0$ is attained, but in general it is very hard to obtain a good estimate of it. 

\begin{rem}[Kazhdan--Margulis theorem]\label{rem:KM}
The original proof of Kazhdan and Margulis \cite{Ka-Ma} shows the stronger statement that $G$ admits an identity neighborhood $U$ such that every lattice in $G$ admits a conjugate which intersects $U$ trivially. This strong version can also be deduced from our argument above. (Indeed $\psi(0)$ is nonempty, being a defamation retract of $Y$.)
In the next lecture we will make use of this stronger statement.
\end{rem}

\begin{exercise}
Show that the fact that $\psi$ attains the value $0$ implies the strong version of the Kazhdan--Margulis theorem.
\end{exercise}


\lecture{Rigidity and applications} 

In this last talk we will present (without proofs) four classical rigidity theorems and derive from each some applications.
In order to state the results in the generality of semisimple rather than simple groups we should define the notion of irreducibility.

\begin{defn}
Let $G$ be a center free semisimple Lie group without compact factors. A lattice $\gC\le_L G$ is called {\it reducible} if $G$ can be decomposed non-trivially $G=G_1\times G_2$ in a way that $(\gC\cap G_1)\times(\gC\cap G_2)$ is of finite index in $\gC$.
If no such decomposition exists, $\gC$ is said to be {\it irreducible}.
\end{defn}

A consequence of the Borel density theorem is that a lattice $\gC\le_L G$ in a center-free semisimple group with more than one simple factors is irreducible iff its projection to every proper factor is dense and iff its intersection with every proper factor is trivial \cite[Corollary 5.21]{Rag}.  

\section{Local rigidity}

Let $\gC$ be a finitely generated group and $G$ a topological group. By $\text{Hom}(\gC,G)$ we denote the space of homomorphisms $\gC\to G$ with the point-wise convergence topology. If $\gC$ is generated by $\{\gs_1,\ldots,\gs_n\}$ we can identify $\text{Hom}(\gC,G)$ with the subset 
$$
 \{(g)_1^n\in G^n:W(g_1,\ldots,g_n)=1, \forall~\text{word}~W\in F_n~\text{s.t.} ~W(\gs_1,\ldots,\gs_n)=1\}
$$
with the topology induced from $G^n$.

$G$ acts on $\text{Hom}(\gC,G)$ by conjugation, where $f^g(\cdot):=gf(\cdot)g^{-1}$.
A map $f\in\text{Hom}(\gC,G)$ is said to be locally rigid if the conjugacy class $f^G$ contains a neighborhood of $f$, i.e. if every $h\in\text{Hom}(\gC,G)$ sufficiently close to $f$ is of the form $f^g$ for some $g\in G$. A subgroup $\gC\le G$ is said to be locally rigid if the inclusion map $\gC\hookrightarrow G$ is locally rigid.

\begin{thm}[Local rigidity] (Calabi, Selberg, Weil, Margulis \cite[Ch. $VII$]{Rag}, \cite[Ch. $VII$]{Mar1})\label{thm:lr} 
Let $G$ be a connected semisimple Lie group group not locally isomorphic to $\PSL_2(\BR)$ or $\PSL_2(\BC)$. Then every irreducible lattice is locally rigid. If $G$ is locally isomorphic to $\PSL_2(\BC)$ and $\gC\le_L G$ then $\gC$ is locally rigid iff it is uniform.
\end{thm}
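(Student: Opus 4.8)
I would deduce local rigidity from the cohomological statement $H^1(\gC,\fg)=0$, where $\fg=\mathrm{Lie}(G)$ carries the $\gC$-module structure coming from $\Ad\circ(\gC\hookrightarrow G)$ (``infinitesimal rigidity''), prove this vanishing in the uniform case and --- when $G$ is not locally isomorphic to $\PSLtR$ or $\PSLtC$ --- in the non-uniform case, and separately exhibit nontrivial deformations when $G$ is locally $\PSLtC$ and $\gC$ is non-uniform. Throughout I may assume $\gC$ torsion-free: by Selberg's lemma it has a torsion-free normal subgroup $\gC_0$ of finite index, and if $\gC_0$ is locally rigid then so is $\gC$ --- a nearby $\rho\colon\gC\to G$ restricts to a conjugate of $\gi|_{\gC_0}$, so after conjugating one may assume $\rho|_{\gC_0}=\gi|_{\gC_0}$; then for $\gc\in\gC$ the element $\gc^{-1}\rho(\gc)$ centralises $\gC_0$, hence lies in $Z(G)$ by the Borel density theorem, and being close to $1$ in the finite group $Z(G)$ it equals $1$.

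\textbf{Step 1 (Weil's criterion).} For torsion-free $\gC$, Theorem~\ref{thm:presentation} gives a finite presentation $\langle\gs_1,\dots,\gs_k\mid R\rangle$, realising $\Hom(\gC,G)$ as a real-analytic subvariety of $G^k$. A direct computation identifies the Zariski tangent space to $\Hom(\gC,G)$ at $\gi$ with the cocycle space $Z^1(\gC,\fg)$, and the tangent space to the conjugation orbit $\gi^G$ with the coboundary space $B^1(\gC,\fg)$; moreover $\dim\gi^G=\dim G-\dim\fg^\gC=\dim G$ since $\fg^\gC=H^0(\gC,\fg)=0$ by Borel density. Hence, assuming $H^1(\gC,\fg)=0$, the smooth submanifold $\gi^G$ has the same dimension as the Zariski tangent space of the ambient analytic set at every point of $\gi^G$ near $\gi$; an elementary argument on tangent cones of analytic sets then forces $\Hom(\gC,G)$ to coincide with $\gi^G$ near $\gi$, i.e.\ $\gC$ is locally rigid. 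This is the only point where analyticity of $\Hom(\gC,G)$ --- not merely its manifold structure --- is used.

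\textbf{Step 2 (vanishing of $H^1$).} Set $X=G/K$ and $M=\gC\backslash X$, a $K(\gC,1)$. In the \emph{uniform} case $M$ is compact and $H^1(\gC,\fg_\BC)\cong H^1(M,\mathcal E)$, the de Rham cohomology of the flat bundle $\mathcal E$ associated to $\Ad$; equipping $\mathcal E$ with a metric from an $\Ad(K)$-invariant inner product on $\fg$, Hodge theory represents this space by harmonic $\mathcal E$-valued $1$-forms. The Matsushima--Murakami Bochner formula on a compact locally symmetric space of non-compact type shows every such form is parallel unless $X$ has a hyperbolic-plane de Rham factor, i.e.\ unless $G$ has a factor locally isomorphic to $\PSLtR$ --- excluded by hypothesis together with the irreducibility of $\gC$ (which is what makes the vanishing criterion apply in the presence of such factors); and a parallel section would produce a nonzero vector in $\fg_\BC^\gC=0$. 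So $H^1(\gC,\fg)=0$; for $\PSLtC\cong\SO(3,1)$, where $X=\BH^3$ has no exceptional factor, the same computation gives the ``if uniform'' half of the $\PSLtC$ statement. In the \emph{non-uniform} case with $G$ not locally $\PSLtR$ or $\PSLtC$ one replaces Hodge theory by $L^2$-cohomology: the work of Garland--Raghunathan (rank one) and Raghunathan (general rank) shows, using the thick--thin decomposition and the decay of the $\Ad$-coefficients along the crystallographic cusps, that $H^1_{(2)}(M,\mathcal E)\to H^1(\gC,\fg_\BC)$ is an isomorphism and that the Bochner argument still kills $L^2$-harmonic $1$-forms. (Alternatively, in higher rank one can bypass cohomology entirely: by Margulis superrigidity a small deformation $\gC\to G$ extends to a homomorphism $G\to G$, necessarily an automorphism, hence inner, so the deformation is conjugate to $\gi$.) Combined with Step 1 this yields local rigidity in all the asserted cases.

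\textbf{Step 3 (failure for non-uniform lattices in $\PSLtC$) and the main obstacle.} If $G$ is locally $\PSLtC$ and $\gC$ is non-uniform, then $M=\gC\backslash\BH^3$ is a finite-volume cusped $3$-manifold, and a Mayer--Vietoris / ``half lives, half dies'' computation on its compact core with $\Ad$-coefficients gives $\dim_\BC H^1(\gC,\fg_\BC)=\#\{\text{cusps}\}>0$, each cusp (whose group is $\BZ^2$ acting by parabolics) contributing a one-parameter family of deformations that changes the cusp shape --- geometrically, Thurston's hyperbolic Dehn surgery deformations. A cocycle that is not a coboundary then produces homomorphisms arbitrarily close to $\gi$ but outside $\gi^G$, so $\gC$ is not locally rigid, completing the ``only if'' direction. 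The genuinely hard inputs are (a) the analytic upgrade in Step 1, and, much more seriously, (b) the non-compact vanishing theorem in Step 2: setting up the correct $L^2$-theory and controlling harmonic $\Ad$-valued forms near the cusps is delicate, and this is precisely where excluding $\PSLtC$ is unavoidable --- the $\BR^2$-shaped cusps of a hyperbolic $3$-manifold are exactly large enough to support nonvanishing cohomology.
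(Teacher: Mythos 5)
The paper itself does not prove Theorem \ref{thm:lr}: it is stated, like the other rigidity theorems of this lecture, without proof and with references to Raghunathan and Margulis. Your outline follows exactly the classical route of those references --- Weil's criterion reducing local rigidity to $H^1(\gC,\fg)=0$, the Calabi--Weil/Matsushima--Murakami Bochner argument in the cocompact case, Garland--Raghunathan (or superrigidity in higher rank) in the non-uniform case, and Thurston's deformations of cusped hyperbolic $3$-manifolds for the converse --- so as a roadmap it is the right one and consistent with what the paper cites. Still, two steps contain genuine gaps rather than honest delegations to the literature.

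First, in Step 3 the inference ``a cocycle that is not a coboundary then produces homomorphisms arbitrarily close to $\gi$ but outside $\gi^G$'' is not valid: a nonzero class in $H^1(\gC,\fg)$ is only an infinitesimal deformation and may be obstructed, i.e.\ $\Hom(\gC,G)$ could be singular at $\gi$ with local dimension equal to $\dim \gi^G$ even though its Zariski tangent space is larger. To disprove local rigidity you must exhibit actual nearby non-conjugate representations; this is what Thurston's Dehn surgery deformation theory (equivalently, a lower bound on the dimension of $\Hom(\gC,G)$ \emph{as an analytic set} near $\gi$, not of its tangent space) provides, together with the observation that the deformed representations change traces of peripheral elements (parabolics become loxodromic), hence leave $\gi^G$. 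You mention Dehn surgery, but as written the logical weight is placed on the cohomology computation, which by itself proves nothing about non-rigidity.

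Second, in Step 2 the hypothesis only excludes $G$ itself being locally $\PSLtR$ or $\PSLtC$; it allows $G$ to have factors locally isomorphic to $\PSLtR$ or other rank-one factors (e.g.\ irreducible lattices, uniform or not, in $\PSLtR\times\PSLtR$). This is precisely where the Bochner argument you describe fails, and saying that irreducibility ``is what makes the vanishing criterion apply'' names the needed input without supplying any mechanism --- that is the hard content of Weil's vanishing theorem (and of the non-uniform analogues), not a parenthetical remark. Minor points: in the reduction to the torsion-free subgroup $\gC_0$ you must argue that the conjugator can be chosen near the identity (local rigidity of $\gC_0$ gives only some conjugator, after which $\rho$ need no longer be close to $\gi$ on $\gC$); the center of a connected semisimple Lie group need not be finite, so use its discreteness; and the Borel-density steps ($\fg^\gC=0$, $\dim\gi^G=\dim G$) presuppose that $G$ has no compact factors, an assumption you should make explicit.
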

 
One consequence of local rigidity, which was observed by  
A. Selberg who proved local rigidity for uniform lattices in $\SL_n(\BR),~n\ge 3$, shows an interesting connection between lattices and algebraic number theorey:

\begin{prop}\label{Prop:alg}
Suppose that $\BG$ is a $\BQ$-algebraic group, $G=\BG(\BR)$ and $\gC$ a locally rigid finitely generated subgroup of $G$. Then there is a number field $\BK\subset\BR$ and $g\in G$ such that $\gC^g\le\BG(\BK)$. In particular every element in $\Ad(\gC)$ has algebraic eigenvalues.
\end{prop}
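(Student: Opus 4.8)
The plan is to realise $\gC$ as a real point of a representation variety defined over $\BQ$, use local rigidity to pin that point onto a single irreducible component (which is then automatically defined over $\ol\BQ$), and finally use the implicit function theorem to slide the point, without leaving its $G$-orbit, to one whose coordinates are algebraic.

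Concretely, fix an embedding of $\BG$ into some $\GL_N$ defined over $\BQ$ and generators $\gc_1,\dots,\gc_n$ of $\gC$. Then $\Hom(\gC,\BG)\subset\BG^n\subset(\GL_N)^n$ is a closed subscheme defined over $\BQ$ --- cut out, as recalled above, by the word relations among the $\gc_i$, which have integer coefficients --- and its real points are $\Hom(\gC,G)$, on which $G=\BG(\BR)$ acts by conjugation. Write $\rho_0\in\Hom(\gC,\BG)(\BR)$ for the point given by the inclusion $\gC\hookrightarrow G$. Local rigidity says $G\cdot\rho_0$ contains a neighbourhood of $\rho_0$; I will use it in the slightly sharper form in which it is actually established (vanishing of $H^1(\gC,\Ad\circ\rho_0)$): $\rho_0$ is a smooth point of the $\BQ$-scheme $\Hom(\gC,\BG)$, hence lies on a unique irreducible component $W$, and $G\cdot\rho_0$ is an open subset of $W(\BR)$ containing $\rho_0$. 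Since $\Hom(\gC,\BG)$ is defined over $\BQ$, $\gal(\ol\BQ/\BQ)$ permutes its finitely many irreducible components, so each is defined over a number field; and complex conjugation fixes the real point $\rho_0$, hence fixes the unique component through it. Therefore $W$ is defined over $\ol\BQ$, and --- being stable under complex conjugation --- even over $F:=\ol\BQ\cap\BR$.

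The heart of the argument is the following approximation. At the smooth real point $\rho_0$ of $W$, choose $d:=\dim W$ of the $nN^2$ coordinate functions (all of them defined over $\BQ$) so that the associated linear projection $\pi$ restricts to an isomorphism on $T_{\rho_0}W$; by the implicit function theorem $W(\BR)$ is, near $\rho_0$, the graph of a real-analytic map over a neighbourhood of $\pi(\rho_0)$ in $\BR^d$, and $\pi$ is étale there. Feeding into this graph a point of $F^d$ close enough to $\pi(\rho_0)$ produces a point $p\in W(\BR)$, as close as we like to $\rho_0$, whose remaining coordinates are isolated solutions of a polynomial system with coefficients in $F$ and hence algebraic over $F$; being real, $p$ lies in $W(F)$. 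Since $G\cdot\rho_0$ is open in $W(\BR)$ and contains $\rho_0$, we may pick such a $p$ inside $G\cdot\rho_0$, say $p=\rho_0^g$ with $g\in G$; thus every entry of every matrix $g\gc_ig^{-1}$ is a real algebraic number. The two points needing care are precisely (i) that local rigidity genuinely forces $\rho_0$ onto a single irreducible component, so that component is $\ol\BQ$-definable, and (ii) that an isolated real point of a variety defined over $F$ has coordinates in $F$; the rest is soft.

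To finish, let $\BK\subseteq\BR$ be the subfield generated over $\BQ$ by the finitely many entries of the matrices $g\gc_ig^{-1}$; it is a finitely generated algebraic extension of $\BQ$, hence a number field, and $\gC^g=\langle g\gc_1g^{-1},\dots,g\gc_ng^{-1}\rangle\subseteq\GL_N(\BK)\cap\BG(\BR)=\BG(\BK)$. For the last assertion, the adjoint representation $\Ad:\BG\to\GL(\fg)$ is a morphism of algebraic groups defined over $\BQ$, so for $\gc\in\gC$ the matrix $\Ad(g\gc g^{-1})=\Ad(g)\,\Ad(\gc)\,\Ad(g)^{-1}$ is $\BK$-rational; its characteristic polynomial lies in $\BK[t]$, so its eigenvalues are algebraic, and they coincide with those of $\Ad(\gc)$. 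Hence every element of $\Ad(\gC)$ has algebraic eigenvalues.
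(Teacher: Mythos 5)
Your overall route is the paper's: give $\Hom(\gC,\BG)$ its natural structure of a $\BQ$-variety, find an ``algebraic'' representation close to the inclusion $\rho_0$, use local rigidity to write it as $\rho_0^g$, and finish with the number field generated by the entries of the conjugated generators (this last part, and the remark on $\Ad(\gC)$, match the paper). The difference is that the paper quotes as a black box the density lemma (Lemma \ref{lem:X(K)-dense}, from \cite[Lemma 3.2]{UT}): for \emph{any} variety $X$ defined over $\BQ$, the points with coordinates in $\ti{\BQ}=\ol{\BQ}\cap\BR$ are dense in $X(\BR)$; you instead prove the density you need by hand at $\rho_0$, and this is where there is a genuine gap. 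You replace the hypothesis ``$\gC$ is locally rigid'' by ``$H^1(\gC,\Ad\circ\rho_0)=0$, the form in which local rigidity is actually established'', and from this you extract smoothness of $\Hom(\gC,\BG)$ at $\rho_0$, the uniqueness of the component $W$ through $\rho_0$, its field of definition, and the applicability of the implicit function theorem at $\rho_0$. But the proposition assumes only local rigidity in the topological sense (the conjugation orbit of the inclusion contains a neighbourhood of it), for an arbitrary finitely generated subgroup; Weil's implication runs only from infinitesimal rigidity to local rigidity, not conversely, and at a locally rigid representation the scheme $\Hom(\gC,\BG)$ may be singular or non-reduced, $\rho_0$ may lie on several components, and then your descent and IFT steps at $\rho_0$ collapse. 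As written, your argument proves the statement only under the strictly stronger infinitesimal hypothesis.

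The smooth-point analysis you give is essentially the proof of the paper's density lemma at smooth points, and your worry (ii) is harmless: $\ol{\BQ}\cap\BR$ is real closed, and since your projection is quasi-finite near $\rho_0$ the chosen point is isolated in the complex fibre as well, hence algebraic. To close the gap without strengthening the hypothesis, either complete your argument by induction on dimension --- if $\rho_0$ is a singular point of $\Hom(\gC,\BG)$ it lies in the singular locus, a proper subvariety still defined over $\BQ$, whose $\ti{\BQ}$-points are dense near $\rho_0$ by induction --- which recovers Lemma \ref{lem:X(K)-dense} in full, or simply invoke that lemma as the paper does and choose the algebraic representation inside the open set $G\cdot\rho_0\subset\Hom(\gC,G)$; from that point on your conclusion proceeds unchanged.
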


Let us explain the proof of Proposition \ref{Prop:alg}. 
It is easy to see that the deformation space $\text{Hom}(\gC,G)$ inherits a structure of a $\BQ$-algebraic variety.
Here is a useful fact about density of algebraic points in such varieties:

\begin{lem}\label{lem:X(K)-dense}
Let $X$ an algebraic variety defined over $\BQ$. Denote by $\ti\BQ$ the algebraic closure of $\BQ$ in $\BR$, i.e. the field of
all elements in $\BR$ which are algebraic over $\BQ$. Then $X(\ti\BQ)$ is dense in $X(\BR)$ with respect to the Hausdorff topology.
\end{lem}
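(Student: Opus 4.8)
The plan is to reduce to the affine case and then argue by induction on the dimension, using a Noether-normalization-type projection to pass from $X$ down to affine space, where density of $\ti\BQ$-points is elementary. First I would observe that it suffices to treat affine varieties: an arbitrary $\BQ$-variety $X$ is covered by $\BQ$-open affine subsets $U_i$, and $X(\ti\BQ) = \bigcup_i U_i(\ti\BQ)$ is dense in $X(\BR) = \bigcup_i U_i(\BR)$ as soon as each $U_i(\ti\BQ)$ is dense in $U_i(\BR)$, because the $U_i(\BR)$ form an open cover of $X(\BR)$ in the Hausdorff topology. Likewise one may pass to irreducible components, so assume $X \subset \BA^n$ is affine and irreducible over $\BQ$, of dimension $d$.

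Next I would set up the induction on $d$. The base case $d = 0$ is trivial: $X(\BR)$ is then a finite set of points whose coordinates satisfy polynomial equations over $\BQ$, hence are algebraic, so $X(\BR) = X(\ti\BQ)$. For the inductive step, after a $\BQ$-linear change of coordinates I would arrange a finite surjective $\BQ$-morphism $\pi : X \to \BA^d$ (Noether normalization); concretely, $\pi$ is the projection to the first $d$ coordinates, and the coordinate ring $\ti\BQ[X]$ (or $\BQ[X]$) is a finitely generated module over $\ti\BQ[t_1,\dots,t_d]$. Given a point $x \in X(\BR)$ and a Hausdorff neighborhood $\Omega$ of $x$, its image $\pi(x) \in \BA^d(\BR) = \BR^d$ can be approximated arbitrarily well by a point $q \in \ti\BQ^d$ (this is the only place the elementary density of $\ti\BQ$ in $\BR$ — indeed $\BQ$ in $\BR$ — enters). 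The fiber $\pi^{-1}(q)$ is a nonempty finite scheme defined over $\ti\BQ$, hence a $0$-dimensional $\ti\BQ$-variety, so by the base case all its real points lie in $X(\ti\BQ)$; and by continuity of the finitely many branches of $\pi^{-1}$ near $\pi(x)$ — i.e. using that $\pi$ is finite, so the fibers vary continuously and a real point of $\pi^{-1}(q)$ can be chosen close to $x$ — we find a point of $X(\ti\BQ)$ inside $\Omega$.

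The step I expect to be the main obstacle is making the "continuity of the branches of $\pi^{-1}$" argument rigorous: one needs that for $q$ close to $\pi(x)$ the finite fiber $\pi^{-1}(q)$ contains a real point close to $x$, which requires knowing that $\pi$ restricted to a suitable compact neighborhood of $x$ is a proper finite map onto a neighborhood of $\pi(x)$ and that real points deform to real points. This can be handled either by invoking the implicit function theorem at points where $\pi$ is \'etale (a dense open subset of $X(\BR)$, so it suffices to approximate $x$ first by such a point, then apply the argument there), or by a direct compactness argument: integral dependence gives a uniform bound on the coordinates of points in $\pi^{-1}(B)$ for $B$ a bounded set, so $\pi$ is proper over bounded sets, and a proper map with finite fibers that is surjective on a neighborhood must have a real point of $\pi^{-1}(q)$ accumulating to some real point of $\pi^{-1}(\pi(x))$ as $q \to \pi(x)$; choosing $x$ in advance to be a point where the fiber $\pi^{-1}(\pi(x))$ is reduced and $x$ is the unique real point of its fiber (again a dense condition) pins this down to $x$ itself. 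Either route is routine but slightly fiddly, so I would lead with the \'etale-locus reduction as the cleanest option.
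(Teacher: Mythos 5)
Your reduction to an irreducible affine $X$ carrying a finite surjective $\BQ$-morphism $\pi\colon X\to\BA^d$ is fine, and so is the observation that every \emph{real} point of a fiber over a $\ti\BQ$-point of $\BA^d$ lies in $X(\ti\BQ)$ (the fiber is finite and defined over a number field). But the step you flagged as the main obstacle is a genuine gap, and neither of your proposed fixes closes it. A finite surjective morphism is surjective on $\C$-points, not on real points, and real points do not persist in nearby fibers: near $x$ the real locus $X(\BR)$ may have local dimension $<d$, in which case $\pi(X(\BR)\cap\Omega)$ has empty interior in $\BR^d$ and \emph{no} choice of $q\in\ti\BQ^d$ with $q\ne\pi(x)$ produces a real point of $\pi^{-1}(q)$ near $x$. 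Concretely, for $X=\{y^2=x^2(x-1)\}$ the origin is an isolated real point; for the Whitney umbrella $X=\{z^2=x^2y\}$ every real point $(0,y_0,0)$ with $y_0<0$ (say $y_0=-\pi$, so the point is not itself algebraic) has a neighborhood in $X(\BR)$ contained in the line $\{x=z=0\}$, while $\dim X=2$. These examples also defeat fix (a): the real points of the \'etale (or smooth) locus are \emph{not} dense in $X(\BR)$ --- a Zariski-dense open subset of $X$ can miss a whole Euclidean-open piece of $X(\BR)$ --- so you cannot first move $x$ into the \'etale locus. Fix (b) fails for the same reason: $\pi$ restricted to real points need not be onto a neighborhood of $\pi(x)$, and points that are the unique real point of their (reduced) fiber are likewise not dense in $X(\BR)$.

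The repair is to aim your induction on dimension at exactly these bad points instead of trying to approximate them within $X(\BR)$. Let $Z\subsetneq X$ be the union of $X^{\mathrm{sing}}$ with the locus where $\pi$ is not \'etale; in characteristic $0$ this is a proper closed subvariety defined over $\BQ$, of dimension $<d$. If $x\in Z(\BR)$, apply the induction hypothesis to $Z$ and use $Z(\ti\BQ)\subset X(\ti\BQ)$. If $x\notin Z$, then $\pi$ is \'etale at the smooth real point $x$, so by the inverse function theorem $\pi$ is a real-analytic local homeomorphism from a neighborhood of $x$ in $X(\BR)$ onto a neighborhood of $\pi(x)$ in $\BR^d$; the local section applied to $q\in\ti\BQ^d$ close to $\pi(x)$ gives a real point of the finite $\ti\BQ$-fiber $\pi^{-1}(q)$ close to $x$, hence a point of $X(\ti\BQ)$ in $\Omega$. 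This is where your ``continuity of branches'' becomes legitimate, and it is consistent with how the paper treats the lemma: the text gives no self-contained argument but deduces it from the implicit function theorem, referring to \cite[Lemma 3.2]{UT}, where the implicit/inverse function theorem plays precisely this role at smooth points and the remaining real points are absorbed by induction through a proper subvariety defined over the same field.
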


This Lemma can be deduced from the implicit function theorem (see \cite[Lemma 3.2]{UT} for a proof given in the general setup of arbitrary global and local fields).
 
Applying the last lemma to $X=\text{Hom}(\gC,G)$ we deduce, in particular, that the inclusion $\gC\hookrightarrow G$ is a limit of ``algebraic points", which are representations whose image lie in $G(\ti\BQ)$. Let $\rho:\gC\to G$ be such an ``algebraic" representation. Assuming that $\rho$ is sufficiently close to the inclusion, we deduce from the local rigidity of $\gC$ that $\rho$ is given by conjugation by some $g\in G$. Given a finite generating set $\gS$ for $\gC$ there is a number field $\BK$ such that 
$\rho(\gs)$ is contained in $G(\BK)$ for every $\gs\in\gS$. This however implies that $\rho(\gc)\in\ G(\BK)$ for every $\gc\in\gC$. Thus we proved that $\gC$ is conjugated to a matrix group whose entries generate a number field. 
 
It follows that a lattice in semisimple linear algebraic groups without compact or $3$ dimensional factors can be represented as a matrix group with entries in some number field. This fact remains true also for non-uniform lattices in $\SL_2(\BC)$ although they are not locally rigid --- this is because these lattices are ``relatively locally rigid" inside the smaller variety of representations which send unipotents to unipotents.
It follows, for instance, that the matrix 
\[
\left(\begin{array}{cc}
\pi & 0\\
0 & {1\over \pi}
\end{array}\right)
\]
is not contained in any lattice in $\SL_2(\BC)$, since its eigenvalues are not algebraic integers.

 \section{Wang's finiteness theorem} 
 Another important application of local rigidity is Wang's finiteness theorem:

\begin{thm}[\cite{Wa}]\label{thm:Wang}
Let $G$ be a connected semisimple Lie group without compact factors, not locally isomorphic to $\SL_2(\BR)$ and $\SL_2(\BC)$. Then for every $v>0$ there are only finitely many conjugacy classes of irreducible lattices $\gC\le G$ with $\vol(G/\gC)< v$. 
\end{thm}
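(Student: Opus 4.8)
The idea is to promote local rigidity (Theorem~\ref{thm:lr}) to a statement about the whole space of lattices: lattices of covolume at most $v$ form a Chabauty-compact family once one bounds them away from the identity, and local rigidity forces each conjugacy class to be open in that family, so there can be only finitely many. Concretely, by the strong form of the Kazhdan--Margulis theorem (Remark~\ref{rem:KM}) there is an open identity neighbourhood $\gO\subset G$ such that every lattice in $G$ has a conjugate meeting $\gO$ only in $\{1\}$; since conjugation preserves covolume, it suffices to bound the number of conjugacy classes inside the set $\mathcal{L}_v$ of discrete subgroups $\gC\le G$ with $\vol(G/\gC)\le v$ and $\gC\cap\gO=\{1\}$. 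This set is compact in the Chabauty topology: the ambient space of closed subgroups of $G$ is compact, the condition $\gC\cap\gO=\{1\}$ survives limits (a Chabauty limit of such groups again meets the \emph{open} set $\gO$ only in $\{1\}$, hence is discrete), and covolume is lower semicontinuous, so the limit again has covolume $\le v$. When $G$ is simple every lattice is irreducible and $\mathcal{L}_v$ is exactly the object we want; in the general semisimple case one runs the argument inside the subset $\mathcal{L}_v^{\mathrm{irr}}\subset\mathcal{L}_v$ of irreducible lattices, which requires knowing that irreducibility passes to the relevant limits (see the last paragraph).

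\emph{Step 2: approximate generators give a genuine representation.} It suffices to show that if $\gC_n\to\gC$ in $\mathcal{L}_v$ with $\gC$ irreducible, then $\gC_n$ is conjugate to $\gC$ for all large $n$: granting this, the conjugacy classes partition the compact set $\mathcal{L}_v^{\mathrm{irr}}$ into open --- hence clopen --- pieces, and there are finitely many. Now $\gC$ is finitely presented: by Selberg's lemma it has a torsion-free finite-index subgroup, which is finitely presented by Theorem~\ref{thm:presentation}, and finite presentability passes to finite overgroups; write $\gC=\langle s_1,\dots,s_k\mid r_1,\dots,r_m\rangle$. Chabauty convergence $\gC_n\to\gC$ yields elements $s_i^{(n)}\in\gC_n$ with $s_i^{(n)}\to s_i$. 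Then $r_j(s_1^{(n)},\dots,s_k^{(n)})\to r_j(s_1,\dots,s_k)=1$, and since these values lie in $\gC_n$, which meets $\gO$ only in $\{1\}$, they equal $1$ for all large $n$ and all $j$. Hence $s_i\mapsto s_i^{(n)}$ defines a homomorphism $\rho_n\in\Hom(\gC,G)$ with image in $\gC_n$, and $\rho_n\to\iota$, the inclusion $\gC\hookrightarrow G$.

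\emph{Step 3: conclusion.} Since $\gC$ is irreducible and $G$ is not locally isomorphic to $\SL_2(\BR)$ or $\SL_2(\BC)$, local rigidity (Theorem~\ref{thm:lr}) gives, for $n$ large, $\rho_n=\iota^{h_n}$ for some $h_n\in G$, so $h_n\gC h_n^{-1}\le\gC_n$. The index $[\gC_n:h_n\gC h_n^{-1}]=\vol(G/\gC)/\vol(G/\gC_n)$ is a positive integer, while lower semicontinuity of covolume forces $\vol(G/\gC_n)>\vol(G/\gC)/2$ for large $n$; hence the index is $1$ and $\gC_n=h_n\gC h_n^{-1}$ is conjugate to $\gC$, which is the claim.

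\emph{Main obstacle.} The hardest point is Step 2: converting local rigidity --- which only controls the representation variety near the inclusion --- into a statement in the Chabauty topology. This forces one to combine the strong Kazhdan--Margulis theorem (to obtain a single identity neighbourhood $\gO$ working for all lattices), finite presentability of lattices (to turn finitely many approximate generators into an honest homomorphism, the error in the finitely many relations being annihilated by uniform discreteness), and lower semicontinuity of covolume together with integrality of the index (to pin the nearby lattice down to a \emph{conjugate} of $\gC$, not merely a finite overgroup). It is exactly here that the exclusion of $\SL_2(\BR)$ and $\SL_2(\BC)$ is indispensable, local rigidity being false for them (Teichmuller deformations and hyperbolic Dehn surgery, respectively). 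A secondary point, relevant only in the genuinely semisimple case, is to check that a Chabauty limit of uniformly discrete irreducible lattices of bounded covolume is again irreducible, so that the compactness argument can be carried out within $\mathcal{L}_v^{\mathrm{irr}}$.
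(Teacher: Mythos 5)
Your proposal follows essentially the same route as the paper: reduce via the strong Kazhdan--Margulis theorem to lattices meeting a fixed identity neighbourhood $\gO$ trivially, use the Mahler--Chabauty compactness criterion on $\text{Sub}_G$, and show each conjugacy class is open there by turning approximate generators into a homomorphism close to the inclusion and applying local rigidity. Your Step 3 (covolume lower semicontinuity plus integrality of the index) nicely fills in the detail the paper leaves as an exercise, and the one point you flag but do not prove --- that a Chabauty limit of irreducible lattices in this family is again an irreducible (locally rigid) lattice --- is likewise glossed over in the paper's own sketch.
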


The proof that Wang gave to this theorem relies on four ingridient
\begin{itemize}
\item Kazhdan--Margulis theorem (Theorem \ref{thm:KaMa} and Remark \ref{rem:KM} from Lecture 3),
\item Lattices are finitely generated (Theorem \ref{thm:d(Gamma)} from Lecture 3),
\item Local rigidity theorem (Theorem \ref{thm:lr} above), and
\item The Mahaler--Chabauty compactness criterion.
\end{itemize}

As we have already discussed the first three, let us say few words about the last one:

\begin{defn}[The space of closed subgroups]\label{defn:sub_G}
Let $G$ be a locally compact group. We denote by $\text{Sub}_G$ the space of all closed subgroups of $G$ equipped with the topology, determined by the sub-base consisting of the following two types of sets:
\begin{itemize}
\item For every compact subset $K\subset G$, $O_1(K):=\{ H\in\text{Sub}_G: H\cap K=\emptyset\}$.
\item For every open subset $U\subset G$, $O_2(U):=\{ H\in\text{Sub}_G: H\cap U\ne\emptyset\}$.
\end{itemize}
\end{defn}

\begin{exercise}
Let $H_n,H\in\text{Sub}_G,~n\in\BN$. Show that $H_n\to H$ iff 
\begin{itemize}
\item $\forall h\in H,~ \exists h_n\in H_n$ such that $h_n\to h$, and
\item For any increasing sequence of integers $n_k$ and elements $h_{n_k}\in H_{n_k}$ for which $\lim h_{n_k}$ exists, we have $\lim h_{n_k}\in H$.
\end{itemize}
\end{exercise}

\begin{exercise}
If $G$ is a locally compact second countable group then
$\text{Sub}_G$ is a compact space.
\end{exercise}

\begin{thm}(Compactness criterion \cite[Theorem 1.20]{Rag})\label{thm:comp-cr}
Let $G$ be a locally compact second countable group, $\gO\subset G$ an open set and $v>0$. The set
$$
 \{\gC\le_L G:\vol(G/\gC)\le v~\text{and}~\gC\cap\gO=\{1\}\}
$$
is compact in $\text{Sub}_G$.
\end{thm}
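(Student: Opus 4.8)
The plan is this. Since $\text{Sub}_G$ is compact, it suffices to prove that the set $\mathcal S:=\{\gC\le_L G:\ \vol(G/\gC)\le v\ \text{and}\ \gC\cap\gO=\{1\}\}$ is closed, so I would take $\gC_n\in\mathcal S$ with $\gC_n\to H$ in $\text{Sub}_G$ and show $H\in\mathcal S$ (we may assume $1\in\gO$). The two soft conclusions come first: by the description of convergence in $\text{Sub}_G$ (the exercise following Definition~\ref{defn:sub_G}), every $h\in H$ is a limit of elements $h_n\in\gC_n$, and if in addition $h\in\gO$ then $h_n\in\gO$ for large $n$, hence $h_n\in\gC_n\cap\gO=\{1\}$ and $h=1$; thus $H\cap\gO=\{1\}$, so $H$ is discrete. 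Also $G$ is unimodular (it has a lattice, Exercise~\ref{ex:L=>um}), so $G/H$ carries an invariant measure $\bar\mu_H$, normalised so that $\vol(G/H)=\bar\mu_H(G/H)$ is the measure of a Borel fundamental domain. For a discrete $\gD\le G$ write $\pi_\gD\colon G\to G/\gD$ for the (open, continuous) quotient map; then $\pi_\gD$ is injective on $A\subseteq G$ exactly when $AA^{-1}\cap\gD=\{1\}$ (with $AA^{-1}=\{ab^{-1}:a,b\in A\}$), and I will use that
$$
\vol(G/\gD)=\sup\{\mu(A):A\subseteq G\text{ open, relatively compact},\ AA^{-1}\cap\gD=\{1\}\},
$$
where ``$\ge$'' is immediate and ``$\le$'' follows by approximating a Borel fundamental domain from inside by compact sets $C$ and thickening to $A=CV$ for a small symmetric relatively compact identity neighbourhood $V$, using discreteness of $\gD$ (so that $\bigcap_V CV^2C^{-1}=CC^{-1}$ meets $\gD$ only in $1$).

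The substance is the bound $\vol(G/H)\le v$, which I would prove as a lower-semicontinuity statement. Fix $M<\vol(G/H)$ and $\epsilon>0$, and choose by the displayed formula an open relatively compact $A_0$ with $A_0A_0^{-1}\cap H=\{1\}$ and $\mu(A_0)>M$. Since $\pi_H$ is open, continuous and injective on $A_0$, it is a homeomorphism of $A_0$ onto the open set $B:=\pi_H(A_0)\subseteq G/H$, and $\mu(A_0)=\bar\mu_H(B)$. Pick an open $U\subseteq G/H$ with $\overline U$ compact, $\overline U\subseteq B$, and $\bar\mu_H(U)>\mu(A_0)-\epsilon$, and set $A':=(\pi_H|_{A_0})^{-1}(U)$, an open set with $\mu(A')=\bar\mu_H(U)>\mu(A_0)-\epsilon$ on which $\pi_H$ is injective. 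The key point is that $\overline{A'}\subseteq A_0$: indeed $\pi_H(\overline{A'})\subseteq\overline U\subseteq B$, so $\overline{A'}\subseteq\pi_H^{-1}(B)=\bigsqcup_{h\in H}hA_0$ (a disjoint union since $A_0A_0^{-1}\cap H=\{1\}$), and if $\overline{A'}$ met some $hA_0$ with $h\ne 1$ then, $hA_0$ being open, it would meet $A'\subseteq A_0$, contradicting $A_0\cap hA_0=\emptyset$. Consequently $K:=\overline{A'}\cdot(\overline{A'})^{-1}$ is a \emph{compact} set with $K\cap H\subseteq A_0A_0^{-1}\cap H=\{1\}$.

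Now I pass to the limit. The set $K\setminus\gO$ is compact and disjoint from $H$ (since $K\cap H=\{1\}\subseteq\gO$), so by the very definition of the topology on $\text{Sub}_G$ — the sub-basic sets $O_1(\cdot)$ — we have $\gC_n\cap(K\setminus\gO)=\emptyset$ for all large $n$; with $\gC_n\cap\gO=\{1\}$ this gives $\gC_n\cap K=\{1\}$, hence $A'A'^{-1}\cap\gC_n\subseteq K\cap\gC_n=\{1\}$ and $\pi_{\gC_n}$ is injective on $A'$. Therefore $\mu(A')\le\vol(G/\gC_n)\le v$ for all large $n$, so $M-\epsilon<\mu(A_0)-\epsilon<\mu(A')\le v$. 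Letting $\epsilon\to 0$ and $M\uparrow\vol(G/H)$ yields $\vol(G/H)\le v$; in particular it is finite, so $H$ is a lattice, $H\cap\gO=\{1\}$, and $H\in\mathcal S$, completing the proof.

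I expect the delicate step to be the passage from $A_0$ to $A'$. Injectivity of $\pi_H$ on the open set $A_0$ does \emph{not} by itself survive perturbation of $H$ to $\gC_n$, because $A_0$ can meet $hA_0$ for group elements $h$ arbitrarily close to a nontrivial element of $H$; one must shrink to an $A'$ whose ``return set'' $\overline{A'}\cdot(\overline{A'})^{-1}$ is a genuine compact set avoiding $H\setminus\{1\}$, which is exactly what permits the application of the open condition $O_1(K\setminus\gO)$ of the Chabauty topology. Everything else is routine manipulation of Haar measure, its inner regularity, and the normalisation of the covolume $\vol(G/\cdot)$.
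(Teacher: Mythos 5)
Your argument is correct, and it follows exactly the route the paper intends: the statement is left there as an exercise whose hint is precisely the characterisation $\vol(G/\gC)=\sup\{\mu(K):K\ \text{compact},\ \pi_\gC\ \text{injective on}\ K\}$, which is the engine of your semicontinuity step, combined with closedness in the compact space $\text{Sub}_G$. The passage from $A_0$ to a slightly smaller $A'$ with $\overline{A'}\subset A_0$, so that the compact set $\overline{A'}\,(\overline{A'})^{-1}$ avoids $H\setminus\{1\}$ and the Chabauty-open condition $O_1(K\setminus\gO)$ can be applied to the $\gC_n$, is exactly the point the hint leaves to the reader, and you have handled it correctly.
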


\begin{exercise}
Prove Theorem \ref{thm:comp-cr}. 
Hint: 
$$
 \vol(G/\gC)=\sup\{\mu(K):K\subset G~\text{is compact and}~K^{-1}K\cap\gC=\emptyset\}.
$$
\end{exercise}

Let us now explain the proof of Theorem \ref{thm:Wang}. Given $G$ as in the theorem and $v>0$, we wish to show that there are only finitely many conjugacy classes of lattices $\gC\le_LG$ with $\vol(G/\gC)\le v$. By Kazhdan--Margulis' theorem there is an open identity neighborhood $\gO\subset G$ such that every lattice admits a conjugate which intersects $\gO$ trivially (see Remark \ref{rem:KM} of the previous lecture). Thus it is enough to prove that the set 
$$
 \{\gC\le_L G:\vol(G/\gC)\le v~\text{and}~\gC\cap\gO=\{1\}\}
$$
is contained in finitely many conjugacy classes. As we have just noted, this set is compact in $\text{Sub}_G$ and thus it is enough to show that every conjugacy class intersect it in an open set. This follows from local rigidity and finite generation by the following:

\begin{exercise}
Let $G$ be as above.

$(a)$ Let $\gC\le G$ be finitely presented discrete subgroup. Show that $\gC$ admits a neighborhood $U$ in $\text{Sub}_G$ such that every $\gD\in U$ is discrete and there is a homomorphism from $\gC$ to $\gD$ which is close to the inclusion in the topology of $\text{Hom}(\gC,G)$. (Hint: Map each generator to a closest point.)

$(b)$ Deduce that if $\gC$ is locally rigid then $\gD$ contains a conjugate of $\gC$, and so if $\gC$ is a lattice then, up to shrinking $U$ to a smaller neighborhood, every $\gD\in U$ is actually conjugated to $\gC$.

$(c)$ Using Hilbert's basis theorem, show that it is enough to assume in $(a)$, and hence also in $(b)$, that $\gC$ is only finitely generated rather than finitely presented.
\end{exercise}

\begin{rem}
$(1)$ In \cite{Wa} Wang proved Theorem \ref{thm:Wang} only for semisimple groups without $\PSL_2(\BR)$ and $\PSL_2(\BC)$ quotients.
For a proof of Wang's theorem for general semisimple groups (including the cases with $\PSL_2$ quotients), see \cite[Section 13.4]{Ge1}.

\end{rem}


\section{Mostow's rigidity theorem}

One of the most remarkable results concerning lattices is Mostow's strong rigidity theorem:

\begin{thm}[\cite{Mos,Pr1}]
Let $G$ be a center free semisimple Lie group without compact factors, and suppose that $G\not\cong\PSL_2(\BR)$. Let $\gC_1,\gC_2\le_L G$ be irreducible lattices. Then every isomorphism between $\gC_1$ and $\gC_2$ extends to an authomorphism of $G$.
\end{thm}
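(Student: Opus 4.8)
The plan is to follow Mostow's original geometric argument, which passes through a quasi-isometry between the symmetric spaces, its boundary extension, and a rigidity statement for the boundary map. Throughout I will assume $\gC_1,\gC_2\le_L G$ are irreducible lattices and that $\phi:\gC_1\to\gC_2$ is an isomorphism; by passing to finite-index subgroups and using the fact (Selberg's lemma, as quoted after Theorem~\ref{thm:bie}) that lattices are virtually torsion free, I may assume both lattices act freely on the symmetric space $X=G/K$, giving closed or finite-volume locally symmetric manifolds $M_i=\gC_i\backslash X$ with $\pi_1(M_i)=\gC_i$. The first step is to realize $\phi$ by a map: by the $K(\pi,1)$ property of $M_i$ (since $X$ is contractible, being CAT(0) by the discussion in Section~4.4 of Lecture~3), there is a homotopy equivalence $f:M_1\to M_2$ inducing $\phi$, and lifting it gives a $\phi$-equivariant map $\tilde f:X\to X$. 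One checks, using that $f$ has a homotopy inverse and that $M_i$ have bounded geometry (finite volume plus the thick--thin decomposition), that $\tilde f$ is a \emph{quasi-isometry}: $\tfrac1L d(x,y)-C\le d(\tilde f x,\tilde f y)\le L\,d(x,y)+C$.

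The second step is to extend $\tilde f$ to the visual boundary. In the rank-one case $X$ is strictly negatively curved (Gromov hyperbolic), and the standard Morse/stability lemma for quasigeodesics shows that $\tilde f$ sends geodesic rays to bounded neighborhoods of geodesic rays, hence induces a well-defined homeomorphism $\partial\tilde f:\partial X\to\partial X$ which is $\phi$-equivariant: $\partial\tilde f\circ\gamma=\phi(\gamma)\circ\partial\tilde f$ for all $\gamma\in\gC_1$. (In higher rank one works instead with the Tits boundary / the space of maximal flats, using that a quasi-isometry of a higher-rank symmetric space coarsely preserves flats — this is the substantial input that replaces the hyperbolic Morse lemma, and it is exactly the place where the proof genuinely diverges from the rank-one argument.) The third step is to promote $\partial\tilde f$ from a mere equivariant homeomorphism to an \emph{isometry} of $\partial X$ in the appropriate sense: in rank one one shows $\partial\tilde f$ is quasiconformal on $\partial X\cong S^{n-1}$ (the quasi-isometry constants control the distortion of the conformal structure), then invokes ergodicity of the $\gC_1$-action on $\partial X\times\partial X$ (a consequence of the Howe--Moore mixing property / the fact that $\gC_i\le_L G$ so the geodesic flow on the unit tangent bundle of $M_i$ is ergodic) to conclude that the quasiconformal map is in fact conformal a.e., hence Möbius. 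For the real hyperbolic case one additionally uses Mostow's measure-theoretic refinement (or Besson--Courtois--Gallot's barycenter method as an alternative route); for the complex/quaternionic/Cayley hyperbolic cases one uses that quasiconformal maps of the corresponding boundaries compatible with the contact structure are automatically automorphisms (Pansu's theorem), and in higher rank one uses that a boundary map of the Tits building preserving the incidence structure is induced by an element of $G$ (Tits' theorem on automorphisms of spherical buildings).

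The final step is to conclude: the isometry (or building automorphism) $\partial\tilde f$ of $\partial X$ is induced by a unique element $g\in\mathrm{Isom}(X)$, and after identifying $G$ with $\mathrm{Isom}(X)^\circ$ (recall from Section~4.4 that for center-free semisimple $G$ without compact factors, $G=\mathrm{Isom}(X)^\circ$, and for $\gD\in\mathrm{Isom}(X)$ the automorphism $\mathrm{Ad}(\gD)$ lands in $G$) we get an automorphism $\alpha=\mathrm{Ad}(g)$ of $G$. The $\phi$-equivariance relation $\partial\tilde f\circ\gamma=\phi(\gamma)\circ\partial\tilde f$ on $\partial X$ forces $g\gamma g^{-1}=\phi(\gamma)$ for all $\gamma\in\gC_1$, because two isometries of $X$ agreeing on $\partial X$ coincide (the boundary determines the isometry, as $X$ is CAT(0) of rank $\ge 1$ with $\gC_i$ Zariski dense by Borel density, Section~1.2). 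Hence $\alpha$ extends $\phi$, which is exactly the claim. The exclusion $G\not\cong\PSL_2(\BR)$ is essential and enters at step three: for $X=\BH^2$ the boundary is a circle, every orientation-preserving homeomorphism of $S^1$ is "quasiconformal", and there is a genuine Teichmüller space of non-conjugate lattices — so the ergodicity argument cannot upgrade $\partial\tilde f$ to a Möbius map. I expect the main obstacle in a fully rigorous write-up to be step three in the higher-rank case (controlling the boundary map on the building via the flat-preservation property of quasi-isometries, i.e. the Kleiner--Leeb / Eskin--Farb machinery), and in the rank-one case the passage from quasiconformal to conformal via ergodic theory, where one must be careful about the non-uniform (cusped) case.
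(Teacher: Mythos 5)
You should first be aware that the paper does not prove this theorem at all: it is quoted as an external result, with the proof deferred to Mostow's book and Prasad's paper \cite{Mos,Pr1}, and the lecture only uses it (together with Theorem \ref{thm:presentation}) to deduce the quantitative finiteness Theorem \ref{thm:QW}. So there is no ``paper proof'' to compare with, and your sketch has to be judged on its own. As an outline of the classical Mostow strategy for \emph{uniform} lattices it is the right skeleton: equivariant map from a homotopy equivalence of the aspherical quotients, quasi-isometry by Milnor--Svarc, boundary extension, and the upgrade to a boundary automorphism (quasiconformal plus ergodicity of the $\gC$-action on $\partial X\times\partial X$ in rank one, preservation of the flat/building structure and Tits' theorem in higher rank), followed by the observation that an element of $\text{Isom}(X)$ is determined by its boundary action and that equivariance plus triviality of the centralizer (Borel density) forces $g\gc g^{-1}=\phi(\gc)$.

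The genuine gap is your first step in the non-uniform case, which the theorem explicitly includes and which is exactly why \cite{Pr1} is cited alongside \cite{Mos}. When $M_1,M_2$ are finite-volume but non-compact, the lift $\ti f$ of a homotopy equivalence is $\phi$-equivariant but is in general \emph{not} a quasi-isometry of $X$: by Milnor--Svarc the lattice with its word metric is quasi-isometric to the thick part (the complement of the equivariant horoballs), not to $X$ itself, and the inclusion of this ``neutered'' space into $X$ is exponentially distorted along horospheres; no appeal to bounded geometry or the thick--thin decomposition repairs this. Consequently the Morse-lemma/boundary-extension step does not apply as stated, and constructing the boundary map (showing cusps go to cusps, controlling the map on horospheres, and still getting quasiconformality on $\partial\BH^n$, or the corresponding statements in higher rank) is the substantial content of Prasad's argument that your sketch skips. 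A second, smaller, gap: after invoking Selberg's lemma you only produce an automorphism $\ga$ of $G$ extending $\phi$ on a torsion-free finite-index subgroup $\gC_1'\le\gC_1$; to conclude that $\ga$ extends $\phi$ on all of $\gC_1$ you need an extra argument (uniqueness of the extension, using that the centralizer of a lattice is trivial by Borel density, applied to $\gc\mapsto\ga(\gc)^{-1}\phi(\gc)$ with $\gC_1'$ chosen normal in $\gC_1$), which should be said explicitly rather than absorbed into the reduction.
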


While Mostow's rigidity theorem has many applications in group theory and geometry, we will only demonstrate how it can be used, in combination with results from the previous lectures, to give an alternative proof as well as a quantitive version to Wang's finiteness theorem. 

\begin{thm}[\cite{BGLM,Ge1,Ge2}]\label{thm:QW}
Let $G$ be a semisimple Lie group without compact factors not locally isomorphic to $\SL_2(\BR),\SL_2(\BC)$. Then the number of conjugacy classes of irreducible lattices in $G$ of covolume $\le v$ is at most $v^{bv}$ where $b$ is some constant depending on $G$.
\end{thm}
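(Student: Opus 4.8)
\medskip
\noindent\textbf{Proof proposal.} The plan is to use Mostow's rigidity theorem to replace the geometric classification of lattices by the purely algebraic problem of counting isomorphism types of abstract groups, and then to feed in the linear bounds on the number of generators and relations from Lecture~3. As a first reduction we may assume $G$ is center free: replacing $G$ by its adjoint group changes each lattice only by a central (discrete) subgroup and alters covolumes and the final constant by bounded factors. Since $G$ is not locally isomorphic to $\PSLtR$, Mostow's rigidity theorem applies, so every isomorphism between two irreducible lattices of $G$ is the restriction of an automorphism of $G$; and for $G$ semisimple the outer automorphism group $\out(G)$ is finite. Hence two irreducible lattices that are isomorphic as abstract groups fall into at most $|\out(G)|$ conjugacy classes, and it suffices to bound the number of isomorphism types of groups that arise as irreducible lattices with $\vol(G/\gC)\le v$.

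Next I would produce a presentation of linear size. Let $\gC\le_L G$ be irreducible with $\vol(G/\gC)\le v$. By Theorem~\ref{thm:d(Gamma)}, $\gC$ is generated by at most $Cv$ elements. When $\gC$ is torsion free, Theorem~\ref{thm:presentation} --- which applies since $G\not\cong\PSLtC$ --- furnishes a presentation $\gC=\langle\gS\mid R\rangle$ with $|\gS|,|R|\le cv$ and every relator of length at most $3$. For $\gC$ with torsion one runs the same thick--thin and nerve argument on the orbifold $\gC\backslash X$; this is the additional work carried out in \cite{Ge1,Ge2}, and I regard it as the first technical hurdle (see below).

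The count is then elementary. The number of group presentations with $n\le cv$ generators and $r\le cv$ relators, each relator a word of length $\le 3$ in $\gS\cup\gS^{-1}$, is at most $\big((2cv+1)^{3}\big)^{cv}=(2cv+1)^{3cv}$, and each presentation determines at most one isomorphism type. For $v$ large this is $\le v^{bv}$ after absorbing the constant $c$ into a slightly larger exponent. For bounded $v$ there are only finitely many classes by Wang's theorem (Theorem~\ref{thm:Wang}), and none below the Kazhdan--Margulis constant (Theorem~\ref{thm:KaMa}), so after enlarging $b$ the estimate $v^{bv}$ holds for every $v$; combined with the reduction of the previous paragraph (which costs only the factor $|\out(G)|$) this proves the theorem.

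\emph{Where the difficulty lies.} Two points require genuine work. First, the torsion case: a torsion-free finite-index subgroup of $\gC$ has index not controlled by $v$, so Selberg's lemma is not a shortcut, and the bounded presentation must be obtained directly on the orbifold $\gC\backslash X$, controlling its singular locus --- exactly the delicate part of \cite{Ge1,Ge2}. Second, and more serious, the length-$\le 3$ presentation of Theorem~\ref{thm:presentation} is not available for $\PSL_3(\BR)$ or $\PSLtR\times\PSLtR$ (cf.\ Remark~\ref{rem:bounded-presentation}), even though both are covered by the statement. For these one argues differently: each has real rank $\ge 2$, so by Margulis' arithmeticity theorem (Theorem~\ref{thm:arith}) every irreducible lattice is arithmetic, and one counts arithmetic lattices of bounded covolume directly, via Prasad's volume formula together with Borel--Prasad-type finiteness results, as done in \cite{Ge1}.
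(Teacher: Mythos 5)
Your argument follows the paper's own proof essentially verbatim: Mostow rigidity reduces counting conjugacy classes of (torsion-free) lattices to counting isomorphism types of fundamental groups of $X$-manifolds, Theorem \ref{thm:presentation} supplies presentations with $\le cv$ generators and $\le cv$ relations of length $\le 3$, and the elementary count of such presentations yields the bound $v^{bv}$. Your extra care with the torsion case and with the exceptional groups $\PSL_3(\BR)$ and $\PSL_2(\BR)^2$ (where the length-$3$ presentation is only conjectural and one falls back on arithmeticity and counting arithmetic lattices) goes beyond the paper's sketch, which simply defers exactly these points to \cite{Ge1,Ge2}, and is consistent with how they are treated there.
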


It is simpler to explain that a function of that type bounds the number of classes of {\it torsion free} lattices. This weaker statement is equivalently formulated as follows: ``the number of non-isometric irreducible complete $X$-manifolds of volume $\le v$ is at most $v^{bv}$", where 
$X=G/K$ is the corresponding symmetric space. By Mostow rigidity, two $X$-manifolds $M_1$ and $M_2$ are isometric if and only if $\pi_1(M_1)\cong\pi_1(M_2)$. By Theorem \ref{thm:presentation} (see also Remark \ref{rem:bounded-presentation}(2)) the fundamental group of an $X$-manifold of volume $\le v$ admits a presentation with $\le cv$ generators and $\le cv$ length-$3$ relations. It is easy to show that the number of groups admitting such a presentation is $\le v^{bv}$ for an appropriate constant $b$.
For a proof of Theorem \ref{thm:QW} in the general case (involving torsion), see \cite{Ge2}. 

\begin{exercise}
Prove that the number $N(c,v)$ of groups $\gC$ admitting a presentation $\gC=\langle \gS | R\rangle$ with $|\gS|,|R|\le c\cdot v$ and all $r\in R$ are of length $|r|\le 3$, satisfies
$$
 v^{av}\le N(c,v)\le v^{bv}
$$
for some constants $a,b$, when $v$ is sufficiently large.
\end{exercise}

\begin{rem}
$(a)$
While the finiteness in general fails for $G\cong\SL_2(\BR),\SL_2(\BC)$, Borel proved an analog finiteness theorem for {\it arithmetic} lattices in these cases as well \cite{Bor1}. A quite precise quantitive version of Borel's theorem is given in \cite{BGLS}.

$(b)$ 
For hyperbolic manifolds of a given dimension $n\ge 4$, it is shown in \cite{BGLM} that the growth is also bounded from below by a function of the form $v^{av}$. However, for higher rank symmetric spaces, it is expected that the growth is much smaller. Assuming the Congruence Subgroup Property (which is known in many cases) and the Generalized Riemann Hypothesis, much better estimates were established in \cite{BL}. 
\end{rem}

\section{Superrigidity and Arithmeticity} 
Perhaps the most spectacular rigidity theorem is:

\begin{thm}[Margulis super-rigidity theorem \cite{Mar1}]
Let $G$ be a semisimple Lie group without compact factors and suppose $\text{rank}_\BR(G)\ge 2$. Let $\gC\le_L G$ be an irreducible lattice. Let $\BH$ be a center free simple algebraic group defined over a local field $k$ and let $\rho:\gC\to \BH(k)$ be a Zariski dense unbounded representation. Then $\rho$ extends uniquely to a representation of $G$.
\end{thm}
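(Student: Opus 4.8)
The plan is to follow Margulis's strategy: attach to $\rho$ a $\gC$-equivariant measurable map from the Furstenberg boundary of $G$ into a flag variety of $\BH$, prove that this map is \emph{rational}, and read off from that rationality a $k$-algebraic homomorphism extending $\rho$. After the standard reductions we may assume $\gC$ is Zariski dense in $G$ (by the Borel density theorem) and that $\BH=\overline{\rho(\gC)}^{\,\mathrm{Zar}}$, so that the task becomes to produce a $k$-algebraic homomorphism $\bar\rho$ of the ambient algebraic groups with $\bar\rho|_\gC=\rho$; uniqueness is then automatic, since two such homomorphisms agreeing on the Zariski-dense subgroup $\gC$ agree identically.

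\textbf{Step 1 (the boundary map).} Let $P\le G$ be a minimal parabolic and $B=G/P$. Since $P$ is amenable and $\gC$ is a lattice, the $\gC$-action on $B$ is amenable in Zimmer's sense, hence there is a $\gC$-equivariant measurable map $\psi\colon B\to\mathrm{Prob}\bigl(\BH(k)/Q\bigr)$ into the probability measures on a flag variety of $\BH$. Using that $\rho(\gC)$ is \emph{unbounded} and Zariski dense, Furstenberg's lemma on the contraction of measures on projective $\BH(k)$-varieties, combined with ergodicity of the $\gC$-action on $B$ (Moore's ergodicity theorem, valid as $G$ has no compact factors and $\gC$ is irreducible) and a maximality argument on $Q$, shows that $\psi$ is almost everywhere a Dirac mass. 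This yields a $\gC$-equivariant measurable map $\phi\colon B\to\BH(k)/Q$ whose essential image, being $\rho(\gC)$-invariant, is Zariski dense in $\BH/Q$.

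\textbf{Step 2 (rationality).} This is the heart of the matter and the point at which $\text{rank}_\BR(G)\ge 2$ is indispensable --- the statement fails outright in rank one. I would show that $\phi$ agrees almost everywhere with a rational map of $k$-varieties intertwining the algebraic $G$-action on $B$ with a homomorphism $\bar\rho\colon G\to\BH(k)$. The mechanism is to pass to the induced $\gC$-equivariant map on pairs, $B\times B\to\BH(k)/Q\times\BH(k)/Q$, restrict to the open $G$-orbit of pairs in general position (a homogeneous space $G/L$ with $L$ a Levi factor containing a maximal split torus, hence noncompact), and combine the ergodicity of the $\gC$-action on $G/L$ and on higher products $B\times\dots\times B$ (Moore) with the Bruhat structure of the $G$-action on $B$, the Zariski density of $\rho(\gC)$ in $\BH$, and --- crucially --- the full higher-rank supply of parabolic subgroups of $G$ and the incidence geometry of $B$ it generates, in order to run Margulis's rationality theorem for measurable equivariant maps between flag varieties. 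The conclusion is the sought $k$-algebraic homomorphism $\bar\rho$, with $\phi$ identified as its boundary map.

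\textbf{Step 3 (conclusion and the obstacle).} It remains to see $\bar\rho|_\gC=\rho$: both $\rho$ and $\bar\rho|_\gC$ turn $\phi$ into a $\gC$-equivariant map $B\to\BH(k)/Q$, so $\bar\rho(\gc)^{-1}\rho(\gc)$ fixes $\phi(x)$ for almost every $x\in B$ and every $\gc\in\gC$; since the essential image of $\phi$ is Zariski dense in $\BH/Q$, this element fixes $\BH/Q$ pointwise, hence is trivial because $\BH$ is center free. The decisive obstacle is Step 2: upgrading a merely measurable equivariant map to an honest morphism of algebraic varieties is precisely where the higher-rank structure theory of $G$ and of algebraic groups over local fields is essential, and where the argument has no counterpart for lattices in $\SO(n,1)$ or $\mathrm{SU}(n,1)$. (In the non-archimedean case $\bar\rho$ then forces $\rho(\gC)$ to be relatively compact, so what the theorem really records there is that unbounded Zariski-dense representations of $\gC$ into $p$-adic groups do not exist.)
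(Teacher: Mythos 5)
The paper itself does not prove this theorem: it is one of the four rigidity theorems of the last lecture explicitly presented ``without proofs'' and cited to \cite{Mar1}, so your proposal can only be measured against the standard Margulis--Zimmer argument, which is indeed the strategy you outline (amenability of the $\gC$-action on $G/P$ giving a measurable equivariant map into $\mathrm{Prob}(\BH(k)/Q)$, contraction plus ergodicity upgrading it to a boundary map $\phi\colon G/P\to\BH(k)/Q$, and a higher-rank rationality theorem turning $\phi$ into an algebraic homomorphism extending $\rho$). Your Step 3, the uniqueness remark, and the parenthetical about the non-archimedean case are correct, and you locate correctly where $\mathrm{rank}_\BR(G)\ge 2$, unboundedness and Zariski density enter.

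As a proof, however, there is a genuine gap, and you flag it yourself: Step 2 is announced rather than carried out, and it is the entire content of the theorem. ``Run Margulis's rationality theorem'' hides the real work: one must show that $\phi$ is a.e.\ rational along the orbits of the various intermediate parabolic directions (this uses ergodicity of $\gC$ on $G/P'$ for proper parabolics $P'\supsetneq P$, which is where rank $\ge 2$ is indispensable, together with the fact that a measurable equivariant map into a $k$-variety which intertwines a transitive algebraic action in a given direction is rational in that direction), then patch these partial rationalities to global rationality of $\phi$ over $k$, and finally deduce that the resulting morphism of varieties is induced by a continuous homomorphism $G\to\BH(k)$ that restricts to $\rho$ on $\gC$ --- none of which is sketched beyond naming the ingredients. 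Similarly, in Step 1 the passage from the $\mathrm{Prob}(\BH(k)/Q)$-valued map to a point-valued map (Furstenberg's lemma plus the maximality choice of $Q$) is a nontrivial lemma, not a remark, and the preliminary reduction ``we may assume $\gC$ is Zariski dense in $G$'' tacitly requires replacing $G$ by its adjoint/algebraic form. So what you have is a faithful outline of the known proof with its key measurable-implies-rational step missing, not a proof.
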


Let us explain briefly how super-rigidity implies arithmeticity (see Theorem \ref{thm:arith} of Lecture 1). Suppose for simplicity that $G$ is a {\it simple} group (of rank $\ge 2$). Let $\gC\le_LG$ be a lattice in $G$. By Proposition \ref{Prop:alg} we may suppose $G=\BG(\BR)$ for some linear $\BQ$-algebraic group and that $\gC\le\BG(\BK)$ for some finite extension $\BK/\BQ$. As $\gC$ is finitely generated (c.f.  Theorem \ref{thm:d(Gamma)} of Lecture 3) there is a finite set of primes $S$ such that $\gC\le\BG(\OO_\BK(S))$ where $\OO_\BK(S)$ denotes the ring of $S$-integers in $\BK$. For any valuation $\nu$ of $\BK$ let $k_\nu$ denote the corresponding completion, and consider the diagonal embedding $\BG(\BK)\hookrightarrow\prod_\nu\BG(k_\nu)$. For every finite $\nu\notin S$, the image of $\gC$ in $\BG(k_\nu)$ lies in $\BG(\OO_{k_\nu})$ where $\OO_{k_\nu}$ is the ring of integers in $k_\nu$, while for $\nu\in S$ it follows from superrigidity that the image of $\gC$ in $\BG(k_\nu)$ is bounded, as $G$ cannot be mapped non-trivially into a totally disconnected group. Since $S$ is finite, we may replace $\gC$ by a finite index subgroup $\gC'$ whose image lies in $\BG(\OO_{k_\nu})$ for any finite valuation $\nu$. It follows that $\gC'\le\BG(\OO_\BK)$. Now consider the infinite valuations. We know that the original embedding $\BK\hookrightarrow\BR$ induces the original imbedding $\gC'\hookrightarrow \BG(\BR)=G$. We claim that for every other imbedding $\BK\hookrightarrow k_\nu$ (where $k_\nu=\BR$ or $\BC$) the resulting group $\BG(k_\nu)$ is compact. Indeed, if $\BG(k_\nu)$ is non-compact for some $k_\nu$, the supperigidity theorem implies that the imbedding $\gC'\hookrightarrow \BG(k_\nu)$ induces an isomorphism $G\to\BG(k_\nu)$. This in turn implies that the embedding $\BK\hookrightarrow k_\nu$ extends to a fields isomorphism $\BR\to k_\nu$ in contrary to the assumption that $\nu$ is not the original valuation.
Denoting by $H$ the product corresponding to infinite valuations
$$
 H=\prod_{\nu~\text{infinite}}\BG(k_\nu)
$$
we get that 
\begin{itemize}
\item $H=\BH(\BR)$ where $\BH=\mathcal{R}_{\BK/\BQ}\BG$, 
\item $\gC'\le H(\BZ)$ and is of finite index there being a lattice, and 
\item the quotient map $f:H\twoheadrightarrow G$ (the projection to the ``first" factor) has compact kernel.
\end{itemize}
Thus, we have seen that $\gC$ is commensurable\footnote{Two groups are said to be {\it commensurable} if their intersection has finite index in both.} to $f(\BH(\BZ))$.
Finally, it is not hard to show that $\gC$ can be conjugated into $f(\BH(\BZ))$, by an element of the image of $f(\BH(\BQ))$. 


\section{Invariant Random Subgroups and the Nevo--Stuck--Zimmer theorem}
Let me end these lectures series by pointing out a new approach in the theory of lattices. The idea is to associate lattices with measures defined on the space of closed subgroups and to study the space of such measures. Remarkably, this naive approach has proven very profitable and was a key to various recent achievements. 

Let $G$ be a locally compact second countable group, and recall the compact space of closed subgroups $\text{Sub}_G$ with the topology defined in \ref{defn:sub_G}. $G$ acts continuously on $\text{Sub}_G$ by conjugations.
An Invariant Random Subgroup (shortly IRS) of $G$ is a Borel regular $G$-invariant probability measure on $\text{Sub}_G$. 

For any measure preserving action of $G$ on a probability space $\gO$, it can be shown that almost every stabilizer is a closed subgroup in $G$, and hence the push forward of the measure from $\gO$ to $\text{Sub}_G$ is an IRS of $G$. It can also be shown (see \cite[Theorem 2.4]{Samurais}) that every IRS in $G$ arises in this way. In particular, one can consider (the conjugacy class of) a lattice $\gC\le_L G$ as an example of an IRS --- we shall denote by $\mu_\gC$ the IRS on $G$ induced by the $G$ action on $G/\gC$ with the normalised measure.

Various people have recently become aware of the importance of IRS's in many branches of group theory, dynamics, geometry and representation theory, and there has been a lot of works studying different aspects of IRS in different context during the last three years. Here I will restrict to the work \cite{Samurais} which makes use of the notion of IRS in order to study the asymptotic of $L_2$-invariant of lattices in semi-simple Lie groups, and report few results from this work. For simplicity of the formulations of the results below let us restrict again to the case where $G$ is simple.

Some results about lattices can be extended to statement about IRS's. For instance the Borel density theorem can be generalized as follows:

\begin{thm}(\cite[Theorem 2.5]{Samurais})
Let $G$ be a simple real algebraic group and let $\mu$ be an IRS without atoms\footnote{As $G$ is simple the atoms can only be supported on the trivial normal subgroups $\{1\},G$.}. Then $\mu$ is supported on discrete and Zariski dense subgroups.
\end{thm}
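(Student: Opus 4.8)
The plan is to prove the two assertions—discreteness and Zariski density—separately, in that order, using the Borel density theorem (item (4) of the list in Lecture 1) together with the pointwise-convergence description of the topology on $\text{Sub}_G$.

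\textbf{Step 1 (Zariski density of generic subgroups).} Consider the map $Z:\text{Sub}_G\to \{\text{Zariski-closed subgroups of }\BG\}$ sending $H$ to its Zariski closure $\overline{H}$. Since $\BG$ is simple, the only Zariski-closed subgroups that are normalized by a Zariski-dense set of elements are $\{1\}$, finite central-type subgroups, and $\BG$ itself; more to the point, I would stratify $\text{Sub}_G$ by the conjugacy type (or at least the dimension and the identity component up to conjugacy) of $\overline{H}$. The key observation is that for a proper connected algebraic subgroup $\BL\lneq \BG$, its normalizer $N_\BG(\BL)$ is again a proper algebraic subgroup, so the set $S_\BL=\{H\in\text{Sub}_G : \overline{H}^\circ \text{ is conjugate to } \BL\}$ is contained in the $G$-saturation of $\text{Sub}_{N_\BG(\BL)}$. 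By $G$-invariance and ergodic decomposition, it suffices to show $\mu$-a.e.\ $H$ has $\overline{H}=\BG$; if not, a positive-measure piece is supported on (conjugates of) a fixed proper $\BL$, and then $\mu$ pushes to an invariant probability measure on $G/N_\BG(\BL)$, which is a $G$-space of infinite volume (non-compact, as $N_\BG(\BL)$ is not cocompact of cofinite volume in the simple group $G$ unless it is everything)—contradiction once atoms are excluded. I would also need to separately rule out $\overline{H}$ being a proper subgroup with $\overline{H}^\circ$ trivial but $\overline{H}$ finite: then $H$ itself is finite, hence a finite (necessarily central) subgroup of the simple group $G$, so $H\in\{1\}$, which is the excluded atom.

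\textbf{Step 2 (Discreteness).} Suppose, for contradiction, that $\mu(\{H : H \text{ is not discrete}\})>0$. For a closed non-discrete subgroup $H$, the identity component $H^\circ$ is a nontrivial connected Lie subgroup, so the set $\{H : H^\circ \neq \{1\}\}$ has positive measure; I would push $\mu$ forward under $H\mapsto H^\circ$ (which is Borel) and note this map is $G$-equivariant, producing a positive-measure ergodic component supported on connected subgroups. Passing to the normalizer as in Step 1, or invoking Borel density directly, a connected nontrivial proper subgroup cannot be conjugation-invariant under a Zariski-dense set, so the only possibility with full measure is $H^\circ=G$, i.e.\ $H=G$ a.e.—again an excluded atom. (One must be slightly careful: the map $H\mapsto H^\circ$ need not be continuous, only Borel, which is enough for the measure-theoretic argument.) Combining Steps 1 and 2, $\mu$-a.e.\ $H$ is discrete and Zariski dense.

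\textbf{Main obstacle.} The delicate point is turning ``positive measure on conjugates of subgroups of a fixed proper algebraic subgroup $\BL$'' into a genuine contradiction without circularity: one wants an $L_2$- or disintegration-type argument showing that a $G$-invariant probability measure on $\text{Sub}_G$ concentrated on $\bigcup_g \text{Sub}_{g N_\BG(\BL) g^{-1}}$ forces an invariant probability measure on $G/N_\BG(\BL)$, which is impossible since $N_\BG(\BL)$ is not cofinite in $G$ (here one uses that $G$ is simple, so a closed subgroup of cofinite volume is either discrete—a lattice—or all of $G$, neither of which $N_\BG(\BL)$ is for $\BL$ connected proper nontrivial). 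Making the disintegration/ergodic-decomposition bookkeeping rigorous, and handling the non-continuity of $H\mapsto\overline H$ and $H\mapsto H^\circ$ (Borel-measurability of these stratifications is standard but must be checked), is where the real work lies; everything else is a direct invocation of Borel density and the compactness of $\text{Sub}_G$.
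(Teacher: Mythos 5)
These lecture notes state the theorem without proof, citing \cite[Theorem 2.5]{Samurais}, so the comparison is with the argument given there. Your overall strategy (measurable equivariant reductions, non-existence of invariant probability measures on proper algebraic homogeneous spaces, the no-atom hypothesis killing the degenerate cases) is in the right spirit, but two steps have genuine gaps. First, your disposal of the case $\overline{H}$ finite is simply wrong: you assert that a finite subgroup of the simple group $G$ is ``necessarily central'', but only finite \emph{normal} subgroups are central, and nothing makes $H$ normal --- non-compact simple groups contain plenty of non-central finite subgroups. The no-atom hypothesis only excludes mass at the two points $\{1\}$ and $G$ of $\text{Sub}_G$, not on the uncountable set of nontrivial finite subgroups, so this case needs an actual argument (e.g.\ restrict to the positive-measure set where $|H|=n$, use that $G$ has only finitely many conjugacy classes of subgroups of order $n$ to concentrate positive mass on a single orbit $G/N_G(F)$ with $N_G(F)$ a proper algebraic subgroup, and rule out an invariant probability there). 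Second, the reduction ``a positive-measure piece is supported on conjugates of a fixed proper $L$'' does not follow from ergodic decomposition: there are uncountably many conjugacy classes of proper connected algebraic subgroups (already the one-dimensional subtori of a maximal split torus of $\SL_3(\BR)$ form a continuum), so you would have to show that the conjugacy-class invariant takes values in a countably separated Borel space before ergodicity makes it essentially constant; this is exactly the ``bookkeeping'' you defer, and it is not routine. Relatedly, ``non-compact/infinite volume'' is not by itself a reason that $G/N_G(L)$ admits no invariant probability measure; the correct justification is the cofinite-closed-subgroup dichotomy you mention only parenthetically (or a Furstenberg-type argument), and it must carry the weight of the proof.

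The proof in \cite{Samurais} avoids both problems with one device that is missing from your proposal: the measurable $G$-equivariant map $H\mapsto \mathrm{Lie}(\overline{H})$ (and $H\mapsto\mathrm{Lie}(H^\circ)$ for discreteness) into the Grassmannian of $\fg$, which is a compact standard Borel $G$-space. The pushforward of $\mu$ is an $\Ad(G)$-invariant probability measure there, and Furstenberg's lemma --- the measure-theoretic heart of the Borel density theorem --- forces it to be supported on $\Ad(G)$-invariant subspaces, i.e.\ on ideals of $\fg$; simplicity leaves only $0$ and $\fg$, the latter excluded because it would give an atom at $G$, and only the zero-dimensional case (finite Zariski closure) remains, handled separately as indicated above. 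This sidesteps any stratification by conjugacy class and its measurability issues. Note also that once Zariski density is established, your Step 2 needs no measure theory at all: $\mathrm{Lie}(H^\circ)$ is $\Ad(H)$-invariant, its stabilizer is Zariski closed, hence $\mathrm{Lie}(H^\circ)$ is an ideal of $\fg$, so for a Zariski dense closed $H$ either $H^\circ$ is trivial or $H=G$.
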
 

Of significant importance  in this approach is the rigidity theorem of Nevo, Stuck and Zimmer (proven in \cite{SZ} relying on the later work \cite{NZ}):

\begin{thm}\label{thm:NSZ}
Let $G$ be a simple Lie group of real rank $\ge 2$. Then every non-transitive, ergodic
probability measure preserving $G$-action is essentially free.
\end{thm}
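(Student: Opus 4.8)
The plan is to translate the action into its stabilizer invariant random subgroup and then, by a short case analysis, to reduce everything either to a triviality or to the rigidity theory of higher rank groups. So the first step is, given an ergodic probability measure preserving action of $G$ on $(\gO,\mu)$ which is not essentially transitive, to form the stabilizer map $\gs\colon\gO\to\text{Sub}_G$, $\gs(\go):=\{g\in G: g\go=\go\}$. This map is measurable and equivariant for the conjugation action of $G$ on $\text{Sub}_G$, so $\nu:=\gs_*\mu$ is an ergodic IRS of $G$, and the theorem is equivalent to the statement $\nu=\gd_{\{1\}}$. We may assume $G$ is center free, the general case reducing to this by dividing out the finite center.

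Second, I would dispose of the case that $\nu$ has an atom. By invariance, all members of a conjugation orbit carry the same $\nu$-mass, so an orbit of positive mass is finite; since $G$ is connected, a finite conjugation orbit in $\text{Sub}_G$ consists of a single point, necessarily a closed normal subgroup $H\le G$, and by ergodicity $\nu=\gd_H$. As $G$ is simple and center free, $H=\{1\}$ or $H=G$. If $H=G$ then $\mu$-almost every point of $\gO$ is fixed by $G$, hence the action is essentially trivial and in particular essentially transitive, contrary to hypothesis; so $H=\{1\}$ and we are done. We may therefore assume $\nu$ is non-atomic; then the generalized Borel density theorem for invariant random subgroups, \cite[Theorem~2.5]{Samurais} (stated there for a simple real algebraic group, to which our $G$ reduces), shows that $\nu$ is supported on discrete Zariski dense subgroups. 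A Zariski dense subgroup of the positive-dimensional group $G$ is infinite, so for $\mu$-almost every $\go$ the stabilizer $\gs(\go)$ is an \emph{infinite discrete Zariski dense} subgroup of $G$.

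It remains to rule out the existence of an ergodic, probability measure preserving, not essentially transitive action of $G$ whose point stabilizers are almost surely infinite discrete Zariski dense subgroups --- this is the heart of the matter and the step I expect to be the main obstacle, because it is precisely here that the hypothesis $\text{rank}_\BR(G)\ge 2$ enters (the corresponding statement fails for rank one groups). The approach is Zimmer's cocycle machinery: from the orbit equivalence relation of the action, together with the measurable field of discrete stabilizers, one manufactures a measurable cocycle with values in an algebraic group, and then combines the Nevo--Zimmer intermediate factor (structure) theorem for probability measure preserving actions of higher rank groups, \cite{NZ}, with Zimmer's cocycle superrigidity theorem; the Howe--Moore mixing theorem and the Mautner phenomenon are the analytic ingredients that make the higher rank argument run, while Margulis' normal subgroup theorem enters implicitly through superrigidity. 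What one extracts is that any such action is, up to measurable conjugacy, the action of $G$ on $G/\gC$ for some lattice $\gC\le_L G$, hence essentially transitive --- contradicting the standing hypothesis. Combining the three cases forces $\nu=\gd_{\{1\}}$, that is, the action is essentially free.
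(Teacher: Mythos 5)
First, note that the paper does not prove this statement at all: it is quoted as the Nevo--Stuck--Zimmer theorem, with the proof attributed to \cite{SZ} and \cite{NZ}, so the benchmark here is simply whether your argument stands on its own. It does not. Your reductions are fine as far as they go: passing to the stabilizer IRS $\nu=\gs_*\mu$, handling the atomic case (a positive-mass conjugation orbit is finite, hence a single normal subgroup since $G$ is connected, hence $\{1\}$ or $G$ by simplicity), and invoking the IRS Borel density theorem to conclude that almost every stabilizer is discrete and Zariski dense. But the step you yourself flag as ``the heart of the matter'' --- showing that an ergodic p.m.p.\ action whose stabilizers are almost surely infinite discrete (Zariski dense) subgroups must be essentially transitive, i.e.\ measurably isomorphic to $G$ acting on $G/\gC$ for some lattice $\gC\le_L G$ --- is precisely the content of the Stuck--Zimmer theorem. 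Naming the ingredients (an algebraic-group-valued cocycle built from the stabilizer field, Zimmer's cocycle superrigidity, the Nevo--Zimmer intermediate factor theorem, Howe--Moore/Mautner) without constructing the cocycle, verifying the Zariski density and non-compactness hypotheses under which superrigidity applies, or carrying out the projective factor analysis where the rank $\ge 2$ hypothesis actually does its work, is a description of the strategy of \cite{SZ,NZ}, not a proof; as written, the decisive step is an appeal to the very statement being proved.

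Two smaller points. The reduction ``we may assume $G$ is center free, dividing out the finite center'' is not automatic: the given action need not factor through $G/Z(G)$, and indeed if the center acts trivially the action cannot be essentially free in the literal sense, so the statement has to be read modulo the center (this imprecision is already present in the quoted formulation, but your one-line reduction does not address it). Also, in the atomic case with $H=G$, concluding that the action is essentially transitive requires using ergodicity to force $\mu$ to be a Dirac mass; you should say this rather than only that the action is ``essentially trivial.'' Neither of these is fatal; the missing higher-rank rigidity argument is.
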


Relying on Theorem \ref{thm:NSZ} and on property $(T)$ it is shown in \cite{Samurais}:  

\begin{thm}(\cite[Section 4]{Samurais})\label{thm:NSZ7}
Let $G$ be a noncompact simple Lie group of rank $\ge 2$. The non-atomic ergodic IRS in $G$ are precisely $\mu_\gC,~\gC\le_LG$, and the only accomulation point of the set $\{\mu_\gC:\gC\le_LG\}$ is the Dirac measure on the trivial group $\{1\}$.
\end{thm}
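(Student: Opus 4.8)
\emph{Idea of the proof.} The two assertions are established in turn.

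For the classification of non-atomic ergodic IRS, let $\mu$ be such a measure. By the realisation theorem quoted above (\cite[Theorem 2.4]{Samurais}), $\mu$ is the stabiliser distribution of some ergodic measure preserving action of $G$ on a probability space $(\Omega,m)$. The Nevo--Stuck--Zimmer theorem (Theorem \ref{thm:NSZ}) forces this action to be essentially free or essentially transitive; the former alternative would make $\mu=\delta_{\{1\}}$, an atom, which is excluded. Hence the action is essentially transitive, so $(\Omega,m)\cong(G/\gC,m_{G/\gC})$ for a closed subgroup $\gC$, which is co-finite because $m_{G/\gC}$ is a finite invariant measure, and $\mu$ is supported on the conjugacy class of $\gC$. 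Since $\mu$ has no atoms, the generalised Borel density theorem (\cite[Theorem 2.5]{Samurais}) shows that $\gC$ and all its conjugates are discrete; a discrete co-finite subgroup is a lattice, so $\mu=\mu_\gC$. Conversely, for $\gC\le_L G$ the $G$-action on $G/\gC$ is transitive, hence ergodic, so $\mu_\gC$ is ergodic; and $\mu_\gC$ is the injective image of the atomless invariant measure on $G/N_G(\gC)$, with $N_G(\gC)\neq G$ because a lattice in a noncompact simple group is not normal, so $\mu_\gC$ is atomless. This settles the first assertion.

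For the second assertion, $\text{Sub}_G$ is compact metrisable (so the space of IRS is weak-$*$ compact), and an accumulation point of $\{\mu_\gC:\gC\le_L G\}$ is a limit $\nu=\lim_n\mu_{\gC_n}$ of pairwise non-conjugate lattices; by Wang's finiteness theorem (Theorem \ref{thm:Wang}) we then have $\vol(G/\gC_n)\to\infty$. I would reduce the claim $\nu=\delta_{\{1\}}$ to the single estimate
\[
  \mu_{\gC_n}\bigl(O_2(U)\bigr)\longrightarrow 0\qquad\text{for every open }U\subset G\text{ with }\overline U\text{ compact and }1\notin\overline U ,
\]
$O_2(U)$ being as in Definition \ref{defn:sub_G}. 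Indeed $\{\{1\}\}$ is closed in $\text{Sub}_G$ and, $G$ being second countable, its complement is a countable union of such open sets $O_2(U)$; so the portmanteau inequality gives $\nu(O_2(U))\le\liminf_n\mu_{\gC_n}(O_2(U))=0$, and countable subadditivity yields $\nu=\delta_{\{1\}}$. That $\delta_{\{1\}}$ does occur as an accumulation point then follows by compactness of the space of IRS, since $G$ carries infinitely many pairwise non-conjugate lattices.

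It remains to prove the displayed estimate, and this is the step I expect to be the main obstacle. Writing $X=G/K$ and $D=\sup_{u\in\overline U}d(u\cdot x_0,x_0)$, and noting that $\gC_n^g\cap U\neq\emptyset$ forces some nontrivial element of $\gC_n$ to move $g^{-1}x_0$ a distance $\le D$, one sees that $\mu_{\gC_n}(O_2(U))$ is bounded by the relative volume of the set of points of $\gC_n\backslash X$ of injectivity radius $\le D/2$; so the estimate is precisely the statement that the locally symmetric orbifolds $\gC_n\backslash X$ Benjamini--Schramm converge to $X$. I would argue by contradiction: after passing to a subsequence, suppose $\mu_{\gC_n}\to\nu'$ with $\nu'(O_2(U))>0$. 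By the first assertion---together with the fact that an atomic ergodic IRS of a group whose only normal subgroups are $\{1\}$ and $G$ must be $\delta_{\{1\}}$ or $\delta_G$---the ergodic components of $\nu'$ lie among $\delta_{\{1\}}$, $\delta_G$ and the measures $\mu_\Delta$ with $\Delta\le_L G$. A $\delta_G$-component is excluded by the Kazhdan--Margulis theorem (Theorem \ref{thm:KaMa}): every conjugate of $\gC_n$ has covolume $\vol(G/\gC_n)\to\infty$, hence cannot be arbitrarily dense in $G$, so $\mu_{\gC_n}(W)=0$ for $n$ large and a fixed neighbourhood $W$ of $G$ in $\text{Sub}_G$. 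The remaining possibility---a component $\mu_\Delta$ with $\Delta$ a genuine lattice---is where property $(T)$ of $G$ enters decisively: such a component would force a definite fraction of $\gC_n\backslash X$ to agree, on arbitrarily large balls, with $\Delta\backslash X$, and out of the corresponding model functions one manufactures a sequence of unit vectors in $\bigoplus_n L^2_0(G/\gC_n)$ which are $(\epsilon,Q)$-almost invariant for every tolerance $\epsilon$ and every compact $Q\subset G$, contradicting the uniform Kazhdan inequality for $G$. This last step is exactly what fails for $\SL_2(\BR)$ and $\SL_2(\BC)$, which lack property $(T)$. Making the phrase ``a definite fraction of $\gC_n\backslash X$ agrees with $\Delta\backslash X$'' precise in $\text{Sub}_G$, and handling non-uniform $\Delta$ (whose quotient $\Delta\backslash X$ has cusps), is the technical heart of the argument, carried out in \cite[Section 4]{Samurais}.
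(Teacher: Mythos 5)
Your first half (the classification of the non-atomic ergodic IRS) is essentially the intended argument: realise the IRS as a stabiliser distribution, use Theorem \ref{thm:NSZ} to split into the essentially free case (which yields the excluded atom $\delta_{\{1\}}$) and the transitive case $G/\gC$ with $\gC$ closed and co-finite, and invoke the IRS Borel density theorem to force $\gC$ to be discrete, hence a lattice; the converse direction is also fine. Bear in mind that these notes do not actually prove Theorem \ref{thm:NSZ7}: they cite \cite[Section 4]{Samurais} and only indicate that the proof rests on Theorem \ref{thm:NSZ} and property (T), so the comparison below is with that indicated route.

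The second half contains a genuine gap, exactly where you anticipate trouble. First, your exclusion of a $\delta_G$-component is unjustified: a Chabauty neighbourhood of $G$ is just a finite intersection $\bigcap O_2(U_i)$ (Definition \ref{defn:sub_G}), and large covolume in no way prevents conjugates of $\gC_n$ from meeting finitely many prescribed open sets. What one should use instead is the Zassenhaus--Kazhdan--Margulis theorem: choose $h_1,h_2$ in a Zassenhaus neighbourhood $\gO$ so that some iterated commutator of depth larger than $\dim G$ is nontrivial, and neighbourhoods $V_i\subset\gO$ of $h_i$ on which it stays nontrivial; then $W=O_2(V_1)\cap O_2(V_2)$ is an open neighbourhood of $G$ containing no discrete subgroup, while $\mu_{\gC_n}$-a.e.\ point of $\text{Sub}_G$ is an honest conjugate of $\gC_n$, so $\mu_{\gC_n}(W)=0$ and hence $\nu'(\{G\})=0$ (this has nothing to do with covolumes). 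Second, and more seriously, property (T) cannot do what you ask of it in the $\mu_\gD$ case: $\mu_\gD$ is itself an ergodic IRS and $L^2_0(G/\gD)$ has a spectral gap, and your proposed construction never uses that the $\gC_n$ are pairwise non-conjugate or that $\vol(G/\gC_n)\to\infty$ --- it would apply verbatim when all $\gC_n$ are conjugate to $\gD$, where no almost invariant vectors exist. A $\mu_\gD$-component is instead killed by local rigidity plus volumes, in the spirit of the proof of Theorem \ref{thm:Wang}: since $\gD$ is finitely generated and locally rigid, it has a Chabauty neighbourhood $U_\gD$ in which every discrete subgroup contains a conjugate of $\gD$; positive $\nu'$-mass on such a component gives $\liminf_n\mu_{\gC_n}(U_\gD)>0$, hence $g\gC_ng^{-1}\supseteq h\gD h^{-1}$ for large $n$, forcing $\vol(G/\gC_n)\le\vol(G/\gD)$, contradicting $\vol(G/\gC_n)\to\infty$, which you correctly extracted from Theorem \ref{thm:Wang}. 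Property (T) enters the argument of \cite{Samurais} elsewhere, via the Glasner--Weiss theorem: it guarantees that a weak-$*$ limit of the ergodic measures $\mu_{\gC_n}$ is again ergodic, so that it is nontrivial mixtures --- not lattice components --- that produce almost invariant vectors in $\bigoplus_n L^2_0(G/\gC_n)$; this replaces your ergodic-decomposition step. With these two repairs your outline does yield the theorem, but as written the step excluding a $\mu_\gD$ component does not go through.
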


The following geometric result is a consequence of Theorem \ref{thm:NSZ7}:

\begin{thm}(\cite[Corolarry 4.10]{Samurais})
Let $G$ be as in the previous theorem and let $X=G/K$ be the associated symmetric space. Let $\gC_n$ be a sequence of representatives for the distinct conjugacy classes of lattices in $G$
 and let $M_n=\gC_n\backslash X$ be the corresponding $X$-orbifolds. Then for every $R>0$ we have
$$
 \lim_{n\to\infty}\frac{\vol(\{p\in M_n:\text{InjRad}_{M_n}(p)\ge R\}}{\vol (M_n)}=1.
$$
\end{thm}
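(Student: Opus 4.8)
\textit{Proof proposal.}

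The plan is to reformulate the assertion as a statement about the invariant random subgroups $\mu_{\gC_n}$ and then to apply Theorem~\ref{thm:NSZ7}. Fix the basepoint $o=eK\in X$, write $d_g(o)=d(g\cdot o,o)$, and for a closed subgroup $\Delta\le G$ set $h(\Delta)=\inf\{d_\gd(o):\gd\in\Delta\setminus\{1\}\}\in[0,\infty]$, so that $h(\{1\})=\infty$. Using the identification $M_n=\gC_n\backslash G/K$ and the fact that $\text{InjRad}_{M_n}(\gC_n gK)=\frac{1}{2}h(g^{-1}\gC_n g)$, together with the normalisation $\vol(M_n)=\vol(G/\gC_n)$, one checks that the quantity in the theorem equals $\mu_{\gC_n}(A_R)$, where $A_R:=\{\Delta\in\text{Sub}_G:h(\Delta)\ge 2R\}$; here one uses that $g\mapsto h(g^{-1}\gC_n g)$ is right $K$-invariant and that $G$ is unimodular, so that the resulting pushforward of the normalised measure is precisely $\mu_{\gC_n}$. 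Since $\{h<2R\}=\bigcup\{O_2(W):W\ \text{open},\ W\subset U_R,\ 1\notin W\}$ is open, where $U_R=\{g:d_g(o)<2R\}$, the set $A_R$ is closed, and the theorem is equivalent to $\mu_{\gC_n}(\{h<2R\})\to 0$ for every $R>0$.

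Next I would show $\mu_{\gC_n}\to\gd_{\{1\}}$ in the weak-$*$ topology of the (compact, metrizable) space of probability measures on $\text{Sub}_G$. The pushforward description shows that $\mu_\gC$ is carried by the conjugacy class $[\gC]\subset\text{Sub}_G$; hence if $\mu_\gC=\mu_{\gC'}$ then the full-measure sets $[\gC]$ and $[\gC']$ meet, forcing $\gC$ and $\gC'$ to be conjugate. As the $\gC_n$ represent pairwise distinct conjugacy classes, the $\mu_{\gC_n}$ are pairwise distinct, so every weak-$*$ subsequential limit of $(\mu_{\gC_n})$ is an accumulation point of the set $\{\mu_\gC:\gC\le_L G\}$; by Theorem~\ref{thm:NSZ7} such a point must be $\gd_{\{1\}}$, and compactness gives $\mu_{\gC_n}\to\gd_{\{1\}}$. (By Wang's finiteness theorem~\ref{thm:Wang} one moreover has $\vol(M_n)\to\infty$, which is the underlying reason the thin part becomes negligible.)

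The subtlety is that $A_R$ is closed but $\gd_{\{1\}}$ charges its topological boundary: a cyclic group generated by a short unipotent element lies close to $\{1\}$ in $\text{Sub}_G$ yet belongs to $\{h<2R\}$, so weak-$*$ convergence alone does not give $\mu_{\gC_n}(\{h<2R\})\to 0$. To circumvent this I would fix a small $a>0$ and split $\{h<2R\}=\{a\le h<2R\}\cup\{h<a\}$. On the first piece the witnessing element of $\Delta$ lies in the compact set $K_{a,R}=\{g:a\le d_g(o)\le 2R\}$, which avoids $1$; thus $\{a\le h<2R\}\subset\text{Sub}_G\setminus O_1(K_{a,R})$, a closed set of $\gd_{\{1\}}$-measure zero, and the inequality $\limsup_n\nu_n(C)\le\nu(C)$ valid for weak-$*$ convergence and closed $C$ gives $\limsup_n\mu_{\gC_n}(\{a\le h<2R\})=0$ for each fixed $a$. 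For the ``very thin'' piece $\{h<a\}$, which is the preimage in $\text{Sub}_G$ of the set of points of $M_n$ of injectivity radius $<a/2$, I would invoke the Margulis lemma~\ref{MarLem} together with the thick--thin analysis: for $a$ below the Margulis constant, this part of $M_n$ is a union of at most $C(G)\vol(M_n)$ components, each lying in a Margulis tube around a short closed (singular) geodesic or in a cusp region, and the portion of each such component where the injectivity radius is $<a/2$ has volume at most some $V(a)$ with $V(a)\to 0$ as $a\to 0$, uniformly over all finite-volume $X$-orbifolds; this yields a uniform bound $\mu_{\gC_n}(\{h<a\})\le C'(G)V(a)$.

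Combining the two pieces finishes the argument: given $R,\gep>0$, choose $a$ with $C'(G)V(a)<\gep$; then $\limsup_n\mu_{\gC_n}(\{h<2R\})\le\gep+\limsup_n\mu_{\gC_n}(\{a\le h<2R\})=\gep$, and letting $\gep\to 0$ gives $\mu_{\gC_n}(\{h<2R\})\to 0$. I expect the main obstacle to be precisely the uniform control of the very-thin part, i.e. the estimate $\vol((M_n)_{<a/2})\le C'(G)V(a)\vol(M_n)$ with $V(a)\to 0$: this is where one genuinely needs the Margulis lemma and the geometry of thin components in a non-positively curved symmetric space, whereas everything else --- the translation into $\text{Sub}_G$, the identification of $\mu_{\gC_n}$, and the passage to the limit --- is formal once Theorem~\ref{thm:NSZ7} is available.
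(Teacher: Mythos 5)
Your reduction to the IRS language and the deduction of $\mu_{\gC_n}\to\delta_{\{1\}}$ from Theorem \ref{thm:NSZ7} are correct, and deriving the statement from that theorem is indeed the intended route; the problem is in your final step. The difficulty you diagnose is not actually there, and your supporting example is wrong: for a short unipotent $u$, the Chabauty limit of $\langle u\rangle$ as $u\to 1$ along a one-parameter subgroup is that whole one-parameter subgroup, not $\{1\}$. In fact $\{1\}$ is an \emph{interior} point of $A_R=\{h\ge 2R\}$, so the open-set half of the Portmanteau inequality already finishes the proof. To see this, fix an identity neighborhood $V$ containing no nontrivial subgroup (Lie groups have no small subgroups) and set $W=\{g: 2R\le d_g(o)\le 4R\}\cup\bigl(\{g:d_g(o)\le 2R\}\setminus V\bigr)$, a compact set avoiding $1$ (compactness uses properness of the $G$-action on $X$). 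If $h(\Delta)<2R$, pick $x\in\Delta\setminus\{1\}$ with $d_x(o)<2R$ and look at its powers: the displacements $d_{x^k}(o)$ change by at most $d_x(o)<2R$ at each step, so either some power has displacement in $[2R,4R]$, or all powers have displacement $\le 2R$, in which case either some power leaves $V$ or $\langle x\rangle\subset V$, which is impossible. Hence every $\Delta\in\{h<2R\}$ meets $W$, i.e. $O_1(W)\subset A_R$ is an open neighborhood of $\{1\}$, and $\liminf_n\mu_{\gC_n}(A_R)\ge\delta_{\{1\}}(O_1(W))=1$. (Equivalently: in your own decomposition, the very thin piece $\{h<a\}$ is absorbed by replacing the short witness by a suitable power, so it is handled exactly like the piece $\{a\le h<2R\}$.)

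Because you missed this, you substitute a genuinely unproved step, and that is the gap: the uniform estimate $\vol(M_{<a/2})\le C'(G)V(a)\vol(M)$, with a per-component bound $V(a)$ independent of the orbifold, is not available from anything in these notes --- the thick--thin theorem proved here concerns real hyperbolic manifolds, while your $M_n$ are higher-rank orbifolds, possibly with torsion --- and as literally stated it is false even in the closely analogous rank-one setting: a Margulis tube about a core geodesic of length $\ell\to 0$ in a hyperbolic $3$-manifold (with small rotational part) has $a$-thin portion of volume on the order of $a^2/\ell$, which is unbounded, so no absolute bound $V(a)$ per component exists. A correct version would have to compare the $a$-thin part of each component to its $\gep$-thin part, which is a nontrivial geometric assertion you would need to prove in the higher-rank orbifold setting. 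Fortunately, as explained above, none of this is needed: once $\mu_{\gC_n}\to\delta_{\{1\}}$ is established, the conclusion is purely a statement about the Chabauty topology near the trivial group.
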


Associating a finite volume manifold together with a random point in it with a probability measure on the space of pointed metric spaces, the last result is interpreted as follows: If $\rank (X)\ge 2$, every sequence of $X$-manifolds, of finite volume tending to infinity, locally converges (in the probabilistic sense of Benjamini and Schramm, see \cite{Samurais} for a precise definition) to the universal cover $X$.
The local convergence to the universal cover implies convergence of certain topological and representation theoretical invariants.
When restricting to the subsequence $\gC_{n_k}$ of uniform torsion free lattices (for which the $M_{n_k}$ are compact manifolds) this result is used to study the asymptotic of $L_2$-invariants of $G/\gC_{n_k}$ and of $M_{n_k}=\gC_{n_k}\backslash X$. In particular a uniform version of the de-George--Wallach theorem \cite{DW} about multiplicity of unitary representations (\cite[Section 7]{Samurais}) and a uniform version of the Lueck approximation theorem (\cite[Section 8]{Samurais}) are proved.

A family of lattices is called {\it uniformly discrete} if the minimal injectivity radius of the corresponding locally symmetric manifolds is uniformly bounded from below. A well known conjecture of Margulis (see \cite[Page 322]{Mar1}) suggests that the family of all torsion free arithmetic uniform lattices in a every given semisimple Lie group is uniformly discrete. Two of the main results of \cite{Samurais} are:

\begin{thm}
Let $G$ be as above, and suppose that $\gC_n\le_L G$ are non-conjugate torsion free uniformly discrete lattices. Let $\pi$ be a unitary representation of $G$ and let $m(\pi,\gC)$ be the multiplicity of $\pi$ in $L_2(G/\gC)$. Then 
$$
 \frac{m(\pi,\gC)}{\vol(G/\gC)}\to d(\pi)
$$
where $d(\pi)$ is the formal degree of $\pi$ and is nonzero iff $\pi$ is a discrete series representation.
\end{thm}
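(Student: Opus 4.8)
The plan is to express $m(\pi,\gC_n)$ as the trace of a convolution operator on $L^2(G/\gC_n)$, to expand this trace geometrically, and to show that everything but the ``identity term'' $d(\pi)\vol(G/\gC_n)$ becomes negligible after dividing by $\vol(M_n)$, using that $M_n=\gC_n\backslash X$ Benjamini--Schramm converges to $X$ --- which, for a sequence of non-conjugate lattices in a higher rank $G$, is exactly the geometric corollary of Theorem~\ref{thm:NSZ7}. Two preliminary reductions. First, a uniformly discrete lattice has no nontrivial unipotent element, since for a unipotent $u$ the closure of the conjugacy class contains $1$, i.e. $\inf_x d_u(x)=0$, producing points of arbitrarily small injectivity radius; hence by the Kazhdan--Margulis theorem \cite{Ka-Ma} each $\gC_n$ is cocompact, so each $M_n$ is a closed manifold and $L^2(G/\gC_n)=\bigoplus_\sigma m(\sigma,\gC_n)H_\sigma$ discretely. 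Second, if $\pi$ is not in the discrete series then $d(\pi)=0$, and the weak convergence of the normalised spectral measures of $L^2(G/\gC_n)$ to the Plancherel measure of $G$ --- itself a consequence of Benjamini--Schramm convergence --- forces $m(\pi,\gC_n)/\vol(M_n)\to 0$, since the Plancherel measure is carried by the tempered dual and assigns mass only to discrete series. So assume from now on that $\pi$ is a discrete series representation, $d(\pi)>0$.

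Fix a unit vector $v$ in the minimal $K$-type of $H_\pi$, set $\varphi(g)=\langle\pi(g)v,v\rangle$ and $\psi=d(\pi)\overline{\varphi}$. By the Schur orthogonality relations for square-integrable representations, right convolution by $\psi$ is the orthogonal projection of $L^2(G)$ onto the $v$-line inside each copy of $\pi$; therefore the operator $R_{\gC_n}(\psi)$ on $L^2(G/\gC_n)$ is the orthogonal projection onto the corresponding subspace of its $\pi$-isotypic part, and --- checking, as DeGeorge--Wallach \cite{DW} do, that the non-tempered part of $L^2(G/\gC_n)$ is annihilated --- $\text{tr}\,R_{\gC_n}(\psi)=m(\pi,\gC_n)$. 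Being a finite-rank projection it is trace class, and expanding its kernel $\mathcal K(x,y)=d(\pi)\sum_{\gc\in\gC_n}\overline{\varphi(x^{-1}\gc y)}$ along the diagonal gives the geometric expansion
$$
 m(\pi,\gC_n)\;=\;d(\pi)\,\vol(G/\gC_n)\;+\;d(\pi)\int_{\gC_n\backslash G}\ \sum_{\gc\in\gC_n\setminus\{1\}}\overline{\varphi(x^{-1}\gc x)}\;dx,
$$
the first summand coming from $\gc=1$, where $\psi(1)=d(\pi)\varphi(1)=d(\pi)$.

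It remains to show that the ``geometric term'' is $o(\vol(M_n))$, and here uniform discreteness is essential. Since the $\gC_n$-orbits in $X$ are $2r_0$-separated for a single $r_0>0$, a volume-packing bound controls $\#\{\gc\in\gC_n:d(\gc\tilde p,\tilde p)\le t\}$ by $Ce^{2\rho t}$ uniformly in $n$ and in a lift $\tilde p$ of a point of $M_n$; combining this with Harish-Chandra's decay estimates for the minimal-$K$-type matrix coefficient of $\pi$ (and the cancellation coming from its oscillation) one obtains, at a point $p\in M_n$ with $\text{InjRad}_{M_n}(p)\ge R$, a bound $\big|\sum_{\gc\neq1}\varphi(\tilde p^{-1}\gc\tilde p)\big|\le\eta(R)$ with $\eta(R)\to 0$, and at the remaining points a uniform bound by a constant $C_0=C_0(r_0)$. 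Thus
$$
 \Big|\frac{m(\pi,\gC_n)}{\vol(M_n)}-d(\pi)\Big|\ \le\ d(\pi)\Big(\eta(R)+C_0\,\frac{\vol(\{p\in M_n:\text{InjRad}_{M_n}(p)<R\})}{\vol(M_n)}\Big),
$$
and letting $n\to\infty$ (the last fraction tends to $0$ by the Benjamini--Schramm convergence of the corollary to Theorem~\ref{thm:NSZ7}) and then $R\to\infty$ yields $m(\pi,\gC_n)/\vol(M_n)\to d(\pi)$, which is nonzero precisely when $\pi$ is a discrete series representation.

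The step I expect to be the real obstacle is the control of the geometric term. For a non-integrable discrete series representation the minimal-$K$-type matrix coefficient lies only in $L^{2+\gep}(G)$ and decays at a rate slower than the growth rate $2\rho$ of a cocompact lattice orbit, so $\sum_{\gc}|\varphi(\tilde p^{-1}\gc\tilde p)|$ diverges and the geometric expansion of the trace is only conditionally convergent; one must genuinely use the cancellation in $\varphi$, organised for instance through Hilbert--Schmidt estimates for $R_{\gC_n}(\psi)$ and its compactly supported truncations, to produce the uniform-in-$n$ bounds above. It is precisely the uniform lower bound on the injectivity radius, which a general sequence of lattices need not enjoy, that keeps these estimates uniform and lets the thin part be discarded.
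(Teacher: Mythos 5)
The paper itself does not prove this theorem: it is reported from \cite[Section 7]{Samurais}, so the only comparison available is with the strategy indicated there, namely a DeGeorge--Wallach trace argument combined with the Benjamini--Schramm convergence furnished by Theorem \ref{thm:NSZ7}. Your outline is in that spirit, and your preliminary reduction is fine: uniform discreteness rules out nontrivial unipotents (a unipotent has $\inf_x d_u(x)=0$, producing points of arbitrarily small injectivity radius), hence by Kazhdan--Margulis \cite{Ka-Ma} each $\gC_n$ is cocompact and $L^2(G/\gC_n)$ decomposes discretely.

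However, the central analytic step is a genuine gap, and you concede as much without closing it. For a non-integrable discrete series the matrix coefficient $\varphi$ lies only in $L^{2+\gep}(G)$, so the kernel $\sum_{\gc\in\gC_n}\psi(x^{-1}\gc y)$ with $\psi=d(\pi)\overline{\varphi}$ need not converge absolutely; the operator ``$R_{\gC_n}(\psi)$'' is therefore not defined by that formula, the identification of its trace with $m(\pi,\gC_n)$ and the term-by-term geometric expansion are unjustified, and the key bound $\bigl|\sum_{\gc\ne 1}\varphi(\tilde p^{-1}\gc\tilde p)\bigr|\le\eta(R)$ is asserted via unspecified ``cancellation'' with no mechanism supplied --- this is precisely the point where the argument must actually be made. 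The way this is done in \cite{DW} and in \cite[Sections 6--7]{Samurais} is to avoid $\varphi$ altogether: one tests with compactly supported functions, typically $f=\phi*\phi^{*}$ with $\phi\in C_c^{\infty}(G)$, uses positivity of $\widehat{f}(\sigma)$ to trap $m(\pi,\gC_n)\,\mathrm{tr}\,\widehat{f}(\pi)$ between traces of $R_{\gC_n}(f)$, and only then applies uniform discreteness plus Benjamini--Schramm convergence to compare $\mathrm{tr}\,R_{\gC_n}(f)$ with $\vol(G/\gC_n)f(1)$; passing from such test functions to the multiplicity of the single representation $\pi$ requires an additional density/inversion argument (Sauvageot's principle in \cite{Samurais}). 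For the same reason your treatment of the non-discrete-series case is too quick: weak convergence of the normalized spectral measures to the Plancherel measure does not by itself control the mass of the single point $\{\pi\}$ (atoms are not preserved under weak limits), and this is exactly where the cited work has to do extra Fell-topology work rather than conclude in one line.
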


\begin{thm}
Let $G$ and $\gC_n$ be as above and denote $M_n=\gC_n\backslash X$. Then for every $k\le\dim(X)$ we have
$$
 \frac{b_k(M_n)}{\vol(M_n)}\to\gb_k(X)
$$
where $b_k$ denotes the $k$'th betti number and 
$$
 \beta_k (X) = \begin {cases} \frac{\chi (X^d)}{\vol (X^d)} & \delta (G) =0 \text { and } k = \frac12 \dim X \\ 0 & \text {otherwise},\end {cases}
 $$
where $X ^d $ is the compact dual of $X$ equipped (like $X$) with the Riemannian metric induced by the Killing form on $\mathrm{Lie}(G)$ and $\delta (G) = \mbox{rank}_\BC(G) - \mbox{rank}_\BC(K)$.
\end{thm}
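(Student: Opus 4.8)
The plan is to reduce the statement to three ingredients. First, the sequence $M_n$ converges to $X$ in the Benjamini--Schramm sense --- this is precisely the content of the preceding corollary (the injectivity-radius statement), which is available because $\mathrm{rank}_\BR(G)\ge 2$. Observe that uniform discreteness forces each $\gC_n$ to be cocompact (a locally symmetric space with a cusp has points of arbitrarily small injectivity radius, by Kazhdan--Margulis), so the $M_n$ are closed Riemannian manifolds. Second, a uniform L\"uck-type approximation: for a uniformly discrete sequence of closed $X$-manifolds converging Benjamini--Schramm to $X$, the normalized Betti numbers $b_k(M_n)/\vol(M_n)$ converge to the $k$-th $L^2$-Betti number $\beta_k^{(2)}(X)$ of the $G$-homogeneous space $X$, i.e. to the constant diagonal value, as $t\to\infty$, of the heat kernel of the Hodge Laplacian on $k$-forms on $X$. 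Third, a theorem of Borel computing the numbers $\beta_k^{(2)}(X)$.

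For the second ingredient I would run the heat-kernel argument underlying L\"uck approximation. Let $\Delta_k$ be the Hodge Laplacian on $k$-forms, $e_k^M(x,y;t)$ its heat kernel and $P_{(0,\infty)}$ the spectral projection onto the positive part of its spectrum. For a closed $X$-manifold $M$ and every $t>0$, McKean--Singer gives
\[
\frac{b_k(M)}{\vol(M)}=\underbrace{\frac{1}{\vol(M)}\int_M e_k^M(x,x;t)\,dx}_{=:a_M(t)}-\frac{1}{\vol(M)}\operatorname{tr}\!\left(e^{-t\Delta_k}P_{(0,\infty)}\right),
\]
and both terms on the right are nonnegative. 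For each fixed $t$, Benjamini--Schramm convergence together with the near-locality of the heat kernel --- by the finite-propagation estimates of Cheeger--Gromov--Taylor, $e_k^{M_n}(x,x;t)$ depends, up to an error decaying exponentially in the radius, only on the isometry type of a ball of radius $\sim C\sqrt t$ about $x$ --- yields $a_{M_n}(t)\to e_k^X(\tilde x,\tilde x;t)=:a(t)$, a constant by homogeneity of $X$; and $a(t)\to\beta_k^{(2)}(X)$ as $t\to\infty$ by Atiyah's $L^2$-index theorem. It then remains to interchange $\lim_{n\to\infty}$ and $\lim_{t\to\infty}$.

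This interchange is the crux, and the step where uniform discreteness is indispensable: one must show $\sup_n\frac{1}{\vol(M_n)}\operatorname{tr}\!\left(e^{-t\Delta_k^{M_n}}P_{(0,\infty)}\right)\to 0$ as $t\to\infty$, i.e. that no normalized spectral mass of $\Delta_k^{M_n}$ escapes to $0$ beyond what is already present in $X$. I would combine the convergence of the integrated density of states of the $\Delta_k^{M_n}$ (a consequence of Benjamini--Schramm convergence) with the structure of the $L^2$-spectrum of $\Delta_k$ on $X$: by the classification of unitary $(\mathfrak{g},K)$-cohomology, $0$ is isolated in $\mathrm{Spec}\bigl(\Delta_k\mid L^2(X,\Lambda^kT^*X)\bigr)$ whenever $k\ne\tfrac12\dim X$, and in the middle degree the harmonic part is spectrally separated from the remainder; a uniform positive lower bound on the injectivity radii of the $M_n$ then prevents the accumulation of small eigenvalues that thin parts could otherwise create, yielding the required uniform decay. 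This spectral no-escape step is where I expect the main obstacle to lie.

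Finally, for Borel's computation: the space of $L^2$-harmonic $k$-forms on $X=G/K$ is nonzero if and only if $G$ admits discrete series, i.e. $\mathrm{rank}_\BC(G)=\mathrm{rank}_\BC(K)$, equivalently $\delta(G)=0$, and in that case it occurs only in the middle degree $k=\tfrac12\dim X$; hence $\beta_k^{(2)}(X)=0$ outside this case. When $\delta(G)=0$, since only the middle $L^2$-Betti number survives, $\beta_{\dim X/2}^{(2)}(X)$ equals the $L^2$-Euler characteristic per unit covolume, which by the Chern--Gauss--Bonnet theorem is the constant value of the Euler form on $X$; as the Euler forms of $X$ and of its compact dual $X^d$ differ by the sign $(-1)^{\dim X/2}$, which is cancelled by the same sign with which the middle term enters the alternating sum, this value equals $\chi(X^d)/\vol(X^d)$. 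Comparing with the statement completes the proof.
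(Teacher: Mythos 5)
First, note that these lecture notes do not actually prove this theorem: it is quoted from \cite{Samurais}, so your sketch has to be measured against the argument given there. Your first and third ingredients are correct and do match that argument: Benjamini--Schramm convergence to $X$ is the preceding corollary; uniform discreteness indeed forces the $M_n$ to be compact; the finite-propagation/heat-kernel locality argument (which is where the uniform lower bound on the injectivity radius enters) gives, for each \emph{fixed} $t$, the convergence $\frac{1}{\vol(M_n)}\int_{M_n}\mathrm{tr}\,e_k^{M_n}(x,x;t)\,dx\to \mathrm{tr}\,e_k^{X}(\tilde x,\tilde x;t)$; and Borel's computation identifies $\lim_{t\to\infty}\mathrm{tr}\,e_k^{X}(\tilde x,\tilde x;t)$ with the stated $\gb_k(X)$. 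The genuine gap is exactly the step you flag as the crux: the claim that $\sup_n\frac{1}{\vol(M_n)}\mathrm{tr}\bigl(e^{-t\Delta_k}P_{(0,\infty)}\bigr)\to 0$ as $t\to\infty$, and the mechanism you propose for it does not work. It is circular: Benjamini--Schramm convergence gives vague convergence of the normalized spectral measures $\mu_n$, but vague convergence cannot separate the atom at $0$ from mass in $(0,\epsilon]$; the estimate you need, $\mu_n((0,\epsilon])\to 0$, is (given $\limsup_n\mu_n([0,\epsilon])\le\mu_X([0,\epsilon])$) equivalent to the lower bound $\liminf_n b_k(M_n)/\vol(M_n)\ge\gb_k(X)$, i.e.\ to the statement being proved, and an injectivity-radius bound alone is not known to exclude a proliferation of small positive eigenvalues. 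Moreover the spectral input you invoke is false in general: when $\delta(G)>0$ --- e.g.\ $G=\SL_3(\BR)$, which has real rank $2$ and $\delta(G)=1$, hence lies within the scope of the theorem --- the $L^2$-spectrum of $\Delta_k$ on $X$ reaches $0$ as continuous spectrum in degrees $k\ne\frac12\dim X$ near the middle, so $0$ is not isolated there and there is no ``spectral separation''.

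The proof in \cite{Samurais} sidesteps the interchange of limits altogether, and you should too. Since $b_k(M)=\dim\ker\Delta_k^M\le\mathrm{tr}\bigl(e^{-t\Delta_k^M}\bigr)$ for \emph{every} $t>0$, one may let $n\to\infty$ first and $t\to\infty$ afterwards, obtaining the one-sided bound $\limsup_n b_k(M_n)/\vol(M_n)\le\gb_k(X)$ in every degree with no uniformity issue; this already settles all cases in which $\gb_k(X)=0$, i.e.\ everything except $k=\frac12\dim X$ when $\delta(G)=0$. In that remaining case the matching lower bound is obtained non-spectrally: by Harder's Gauss--Bonnet theorem (Hirzebruch proportionality)
$$
 (-1)^{\frac12\dim X}\,\chi(M_n)=\frac{\chi (X^d)}{\vol (X^d)}\,\vol(M_n),
$$
so $b_{\frac12\dim X}(M_n)\ge(-1)^{\frac12\dim X}\chi(M_n)-\sum_{k\ne\frac12\dim X}b_k(M_n)$, and the sum is $o(\vol(M_n))$ by the upper bounds already proved in the other degrees. (Equivalently, the lower bound can be extracted from the multiplicity theorem stated just before this one --- the uniform de George--Wallach theorem --- applied to discrete series representations, via Matsushima's formula.) Note that your own final paragraph, Chern--Gauss--Bonnet on $X$, is precisely this idea applied to the limit object; applying it to the manifolds $M_n$ themselves, in place of the ``spectral no-escape'' step, is what turns your outline into the actual proof.
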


\end{document}